\definecolor{darkblue}{rgb}{0,0,0.545098}
\definecolor{darkgreen}{rgb}{0,0.392157,0}
\newtheorem{theorem}{Theorem}[section]
\newtheorem{lemma}[theorem]{Lemma}
\newtheorem{proposition}[theorem]{Proposition}
\theoremstyle{definition}
\theoremstyle{remark}
\newtheorem{remark}[theorem]{Remark}
\numberwithin{equation}{section}
\begin{document}
\title[Upper Bounds for a nonlinearly damped]%
{Nonlinear damping effects for the 2D Mindlin-Timoshenko system}
\author[Bchatnia ]{ Ahmed Bchatnia$^1$}
\address{$^1$ LR Analyse non-lin\'eaire et g\'eom\'etrie, LR21ES08, 
              Department of Mathematics, Faculty of Sciences of Tunis,
              University of Tunis El-Manar, 2092 El Manar II, Tunisia}
\email{ahmed.bchatnia@fst.utm.tn}
\author[Chebbi]{Sabrine chebbi$^1$}
\address{$^1$  LR Analyse non-lin\'eaire et g\'eom\'etrie, LR21ES08, 
              Department of Mathematics, Faculty of Sciences of Tunis,
              University of Tunis El-Manar, 2092 El Manar II, Tunisia}
\email{sabrine.chebbi@fst.utm.tn}
\author[Hamouda]{Makram Hamouda$^{ 2^{\ast}}$}
\address{$^2$ Department of Basic Sciences, Deanship of Preparatory Year and Supporting Studies, Imam Abdulrahman Bin Faisal University, P. O. Box 1982, Dammam, Saudi Arabia}
\email{mmhamouda@iau.edu.sa}
\address{$^*$ Corresponding author.}

\maketitle

\begin{abstract}
We study in this article the asymptotic behavior of the Mindlin-Timoshenko system subject to a nonlinear dissipation acting only on the equations of  the rotation angles. First, we briefly recall the existence of the solution of this system. Then, we prove that the energy associated with the Mindlin-Timoshenko system  fulfills a dissipation relationship showing that the energy is decreasing. Moreover, when the wave speeds are equal, we establish an explicit and general decay result for the energy.
\end{abstract}

\noindent{\emph{Keywords: }}{Mindlin-Timoshenko system, Stability, Asymptotic behavior, Optimal decay.}\\
\noindent{\emph{Subjectclass[2020]: }}{35B40, 93D15, 93D20.}

\section{\textbf{Introduction}}
 In the recent years, the Timoshenko system has been attracting a considerable attention. Indeed, this system is important not only from the mathematical point of view but also from the physical perspective in view of the multiple applications in mechanics and aeronautics among many other sciences. The Timoshenko system is considered also as  one of the simplest  models   which  describes the transverse vibration of a beam with fixed extremities.
The transverse vibration  of a beam under the Timoshenko approach  is mathematically described by the following coupled system:

\begin{equation*}\left\{\begin{array}{ll}\rho \varphi _{tt}-K(\varphi _{x}+\psi )_{x}=0, &\text{in} \ \left( 0,L\right) \times
\left( 0,\infty \right), \\
I_{\rho } \psi _{tt}-(EI\psi _{x})_{x}+K (\varphi _{x}+\psi) =0,&\text{in} \  \left(0,L\right) \times \left( 0,\infty \right).%
\end{array}%
\right.  \label{Timo11}
\end{equation*}
 The  functions $\varphi$ and $\psi$ denote, respectively, the transverse displacement of the beam and the rotation angle of  the filament. 
 
 In  \cite{3,6}, the authors consider a one-dimensional system with a linear damping term given as follows:
 \begin{equation} \label{2}
     \left \{ \begin{array}{lrl}
 \rho_1 \varphi_{tt} - k(\varphi_x+ \psi)_x=0,  \hspace{2.6cm } (x,t) \in \left(0,L\right)\times \mathbb{R}_+,\\
         \rho_2\psi_{tt}-b\psi_{xx}+k(\varphi_x+\psi)+d \psi_t=0, \hspace{0.5cm }(x,t) \in \left(0,L\right)\times \mathbb{R}_+.
       \end{array}\right.
       \end{equation}
 They proved that the solution of the system \eqref{2} is exponentially stable if and only if the wave speeds are equal $(\frac{k}{\rho_1}=\frac{b}{\rho_2}).$ Such a result has been the common point in several works \cite{1,2,3,4,5} with  different types of dissipation. Moreover, for the damped Timoshenko system (namely \eqref{2} with $d \psi_t$ being replaced by $g(\psi_t)$) with  a nonlinear dissipation $g(\psi_t)$ and without any growth assumption at the origin on the function $g$, Alabau-Boussouira  established in \cite{AB2}  a general semi-explicit formula for the decay rate of the energy at infinity in the case of  equal wave-speeds, and she  proved a polynomial decay in the case of different speeds of propagation, for both linear and nonlinear globally Lipschitz feedback $g(\psi_t)$. Then, in  \cite{8}, the same author established a strong lower energy estimate for  
the solutions of the nonlinearly damped Timoshenko beams after proving some additional regularity results.\\ 

Recently, the authors in  \cite{me} proved a general semi-explicit lower estimate for the energy associated with  the  nonlinear Timoshenko system with thermoelasticity coupled with the heat flux which is given by  Cattaneo's law. This result together with the one  on the upper estimate for the energy of the above mentioned system, obtained in \cite{ayadi}, constitute an interesting step  to better understanding the optimality in terms of {\it optimal feedback}.   \\

Based on these simple observations in the history of the problem, some questions naturally arise, we list them in the following.
\begin{itemize}
\item[$\mathcal{Q}1-$] Can we generalize the results obtained for the Timoshenko beam system in dimension one to higher dimensions (mainly the dimension two (2D) which will be considered in the present article) by investigating the upper and lower estimates, and then target the optimality?
\item[$\mathcal{Q}2-$] What should be the "sufficient" dissipation which allows the stabilization of the   2D system and that generates the exponential decay of the energy? Does the stability depend on any relationship between the physical coefficients in the system?
\item[$\mathcal{Q}3-$] In the case of a non-exponential decay rate, what is the type of decay (polynomial or others) and how can we  investigate the optimality?
\end{itemize}

Of course  to answer all these questions, we need to start by the most necessary ones.  Hence, we consider in this article, as  an extension of the one-dimensional Timoshenko system, the 2D	model which  is  subsequently developed by Reissner \cite{46} and Mindlin \cite{37}. The Reissner-Mindlin-Timoshenko plate theory can be viewed as a two-dimensional generalization of the thin beam model. Indeed, this model takes into account the shear deformations in addition to the assumption  that the filaments of the plate must remain perpendicular to its mid-plane.\\

Mathematically, the Reissner-Mindlin-Timoshenko model is actually a hyperbolic system composed of three coupled second-order partial differential equations in space dimension two:
  \begin{equation}\label{IMT1}
 \rho_1w_{tt}-K(\psi+ w_x)_x+ (\varphi+w_y)_y=0,
 \end{equation}
 \begin{equation}\label{IMT2}
 \rho_2 \psi_{tt}-D \psi_{xx}- D\left(\frac{1-\mu}{2}\right) \psi_{yy}- D\left(\frac{1+\mu}{2}\right)  \varphi_{xy}+K(\psi+ w_x)=0,
 \end{equation}
 \begin{equation}\label{IMT3}
 \rho_2 \varphi_{tt}-D \varphi_{yy}-D\left(\frac{1-\mu}{2}\right)  \varphi_{xx}-D \left( \frac{1+\mu}{2}\right)  \psi_{xy}+K (\varphi+ w_y)=0.
 \end{equation}
Let us describe some characteristics related to the above equations which are considered on the bounded domain $\Omega \subset \mathbb{R}^2$. The functions $w$, $\psi$ and $\varphi$ depend on $(t,x,y)\in [0,\infty)\times \Omega $ and denote, receptively, the transverse displacement and the rotational angles of  the plate. The parameter $\rho$ is the (constant) mass per unit of surface area, $\mu$ is the Poisson's ratio $(0<\mu<1/2)$, $D=\frac{Eh^3}{12(1-\mu^2)}$ is the modulus of flexural rigidity where $h$ is the (uniform) plate thickness and $K=\frac{kEh}{2(1+\mu)}$ is the shear modulus where $E$ is the Young's modulus and $k$ is the shear correction. The  speeds of wave propagation for the Reissner-Mindlin-Timoshenko system are given as follows:
\begin{equation*}
v_1^2:=\frac{K}{\rho_1}\quad  \ \text{and} \quad v_2^2:=\frac{D}{\rho_2}.
\end{equation*}
Concerning the stability results of the system \eqref{IMT1}--\eqref{IMT3}, the most known ones are due to Lagnese \cite{lag2} where a bounded domain $\Omega$ with a Lipschitz boundary $\Gamma$  is considered. Moreover, the boundary $\Gamma$  is taken such that $\Gamma=\overline{\Gamma_0} \cup \overline{\Gamma_1}$ where $\Gamma_0$ and $\Gamma_1$ are relatively open, disjoint subsets of $\Gamma$ with $\Gamma_1 \neq \varnothing$. In addition to the aforementioned assumptions on the domain $\Omega$, Lagnese considered the equations \eqref{IMT1}--\eqref{IMT3} with the following boundary conditions:
 \begin{equation*}
w=\psi=\varphi=0, \ \ \text{on} \  \Gamma_0,
\end{equation*}
\begin{equation*}
K\left( \frac{\partial w}{\partial x}+\psi, \frac{\partial w}{\partial y} + \varphi \right)\cdot v=m_1,  \ \ \text{on} \  \Gamma_1,
\end{equation*}
\begin{equation*}
D\left( \frac{\partial \psi}{\partial x}+\mu \frac{\partial\varphi}{\partial y}, \frac{1-\mu}{2}\left( \frac{\partial \varphi}{\partial x}+ \frac{\partial \psi}{\partial y}\right)\right)\cdot v= m_2,  \ \ \text{on} \  \Gamma_1,
\end{equation*}
\begin{equation*}
D\left( \frac{1-\mu}{2}\left( \frac{\partial\varphi}{\partial x}+\frac{\partial \psi}{\partial y}, \frac{\partial \varphi}{\partial y}+\mu \frac{\varphi}{\partial y}+\mu \frac{\partial \psi}{\partial x}\right)\right)\cdot v=m_3,  \ \ \text{on} \  \Gamma_1,
\end{equation*}
where $v=(v_1,v_2)$ is the unit exterior normal to $\Gamma$ and $m_1, m_2$ and $m_3$ denote the linear boundary dissipation given by $$\lbrace m_1,m_2,m_3\rbrace=- F \lbrace w_t,\psi_t,\varphi_t\rbrace,$$
with $F=[f_{ij}]$ is a $3\times 3$ matrix of real $L^{\infty}(\Gamma_1)-$functions such that $F$ is symmetric and positive semi-definite on $\Gamma_1$. Lagnese \cite{lag2} proved that the undamped system is \textit{exponentially stable}, without any restriction on the coefficients of the system. Later, Rivera and  Oquendo \cite{10} consider the system \eqref{IMT1}--\eqref{IMT3} with a boundary dissipation of memory type. They proved the exponential stability when the kernels own an exponential behavior and a polynomial one when the kernels are of polynomial type.
Let us further mention some other interesting  results as e.g.  \cite{8}, the  Reissner-Mindlin-Timoshenko system \eqref{IMT1}--\eqref{IMT3}, with frictional dissipation acting on the equations of the rotation angles, is not exponentially stable independently of any relationship between the constants of the system.  This is not the case  in \cite{12} where  the existence of a critical number, that exponentially stabilizes the system \eqref{IMT1}--\eqref{IMT3} with  the frictional dissipation acting only on the equation  of the transverse displacement, is proven. 
In fact, in \cite{12}, the Reissner-Mindlin-Timoshenko system  is damped by two linear feedback laws acting on the rotation angles and the existence of a critical number that stabilizes  the system exponentially is shown. Finally, it is worth mentioning, in view of the above cited papers among others, that the linear dissipation has been widely studied in the literature. However, there has been less focus on the interaction of nonlinear damping terms within the  2D system (the Reissner-Mindlin-Timoshenko system).

 For recent results on coupled PDE dynamics with interface on  the Reissner-Mindlin-Timoshenko plate
equations, we refer the reader to   the works by Grobbelaar-Van Dalsen \cite{MG1,MG2,MG3,MG4,MG5} and the  references therein, Giorgi and Naso \cite{Gio}, and Avalos and Toundykov \cite{Geo1,Geo}.

It is worth mentioning that the above mentioned results precise some of the necessary conditions that lead to the exponential  stabilization of the model (depending on the choice of the boundary conditions, the equality or non-equality of the wave speeds and the nature of damping terms). Here, the challenge consists in considering  nonlinear damping which are distributed everywhere in the domain and acting only on the rotational angles. Taking into account the work of  Alabau-Boussouira \cite{AB2,8}, we aim here to establish a general  and explicit decay result for the energy associated with the  system  (\ref{MTD1})--(\ref{MTD3}) below. More precisely, we prove that the  energy decay rate, as introduced in  \cite{AB2} for a nonlinearly damped hyperbolic system coupled by velocities,  can be extended to our plate model. The proofs of our results are based on multiplier techniques, weighted nonlinear integral inequalities and the optimal-weight convexity method used in \cite{AB1,AB2}. Indeed, the latter method  is originally  developed  in \cite{AB1} where the author  completed the study carried out in \cite{115}  and improved the results   in \cite{121}. In the present work we assume a convexity assumption on the feedback as we will see later on, and we prove  the   asymptotic behavior of the energy   in  higher dimensions. For more details, we refer  the reader to \cite{onestep} for  the wave equation and to \cite{AB2} for the one-dimensional Timoshenko system.

The  nonlinearly damped Reissner-Mindlin-Timoshenko system in two dimensions reads as follows:
 \begin{equation}\label{MTD1}
 \rho_1w_{tt}-K(\psi+ w_x)_x - K (\varphi+w_y)_y=0,
 \end{equation}
 \begin{equation}\label{MTD2}
 \rho_2 \psi_{tt}-D \psi_{xx}- D\left(\frac{1-\mu}{2}\right) \psi_{yy}- D\left(\frac{1+\mu}{2}\right)  \varphi_{xy}+K(\psi+ w_x)+ \chi_1(\psi_t)=0,
 \end{equation}
 \begin{equation}\label{MTD3}
 \rho_2 \varphi_{tt}-D \varphi_{yy}-D\left(\frac{1-\mu}{2}\right)  \varphi_{xx}-D \left( \frac{1+\mu}{2}\right)  \psi_{xy}+K (\varphi+ w_y)+\chi_2(\varphi_t)=0,
 \end{equation}
 where $\chi_1$ and $\chi_2$ denote the two nonlinear damping terms.
 
To the best of our knowledge, it is the first time that the asymptotic behavior of the system \eqref{MTD1}--\eqref{MTD3} (with nonlinear damping terms acting only on the rotation angles and without any damping on the displacement) is studied. \\

The paper is organized as follows. In Section \ref{s2}, we use the
nonlinear semigroup theory to obtain the well-posedness of the system \eqref{MTD1}--\eqref{MTD3}. In Section \ref{s4}, we prove that the energy of
the system \eqref{MTD1}--\eqref{MTD3} is a decreasing function of $t$. Finally, in Section \ref{examples},  we show that the optimal-weight convexity method allows us to obtain an explicit decay rate formula of  the total energy  associated with the solution of the problem \eqref{MTD1}--\eqref{MTD3}. 

\section{\textbf{Well-posedness}}\label{s2}
 We set the following assumptions on  the damping functions $\chi_i$:
\begin{equation*}
\hspace{0.3cm}(H_{0})\left\{
\begin{array}{l}
\displaystyle\chi_i \in \mathcal{C}(\mathbb{R}) \mbox{ is a monotone   increasing function}.\\
 \displaystyle\exists \ c_1, c_2 >0 \ \mbox{and a  strictly  increasing  odd function }  g \in \mathcal{C}^1(\mathbb{R})\  \mbox{such  that},\\
 \displaystyle c_1g(|s|)\leq |\chi_i(s)|\leq  c_2 g^{-1}(|s|),\hspace{2.7em}\mbox{for all} %
\hspace{1.2em}|s|\leq 1 \ \mbox{for} \  i=1,2 , \\
 \displaystyle c_{1}|s|\leq |\chi_i(s)|\leq c_{2}|s|,\hspace{6em}\mbox{ for all}\hspace{1.13em}%
|s|\geq 1  \ \mbox{ for} \ i=1,2,\\
\end{array}%
\right.
\end{equation*}%
where $g^{-1}$ denotes the inverse function of $g$.\\
Thanks to the assumption $(H_0)$, for all $\varepsilon >0$, there exist constants $c_3>0$ and $c_4>0$ such that
\begin{equation}\label{c3c4}
c_3g(|\nu|)\leq |\chi_1(\nu)|\leq c_4g^{-1}(|\nu |), \quad \forall \ |\nu |\leq \varepsilon,
\end{equation}
and two other positive constants $c_5$ and $c_6$ such that
\begin{equation}\label{c5c6}
c_6g(|\nu|)\leq |\chi_2(\nu)|\leq c_5g^{-1}(|\nu |), \quad \forall \ |\nu |\leq \varepsilon.
\end{equation}

The domain  $\Omega \subset \mathbb{R}^2$ is simply here a rectangle  given by
$$\Omega=(0,L_1)\times(0,L_2), \hspace{1cm}\mbox{with}\ L_1, L_2>0.$$
We associate with  the system \eqref{MTD1}--\eqref{MTD3} the following boundary conditions on $\Gamma = \partial\Omega$,
\begin{equation}\label{BD}
 w=\psi=\varphi=0, \quad \text{on} \ \Gamma \times \mathbb{R}^{+},
 \end{equation}
and the initial data given by
\begin{equation}\label{IC-new}
  \left\{
\begin{array}{l}
w(0,x,y)=w_0(x,y),\hspace{0.5cm} w_t(0,x,y)=w_1(x,y), \hspace{0.5cm} \mbox{in}\  \hspace{0.2cm}\Omega,\\
\psi(0,x,y)=\psi_0(x,y),\hspace{0.5cm} \psi_t(0,x,y)=w_1(x,y), \hspace{0.5cm} \mbox{in}\  \hspace{0.2cm}\Omega,\\
\varphi(0,x,y)=\varphi_0(x,y),\hspace{0.5cm} \varphi_t(0,x,y)=\varphi_1(x,y), \hspace{0.5cm} \mbox{in}\  \hspace{0.2cm}\Omega.
   \end{array}
 \right.
 \end{equation}

 In the sequel we will use the semi-group theory to show that the problem (\ref{MTD1})--(\ref{MTD3}), associated with the initial conditions \eqref{IC-new} and the Dirichlet boundary conditions \eqref{BD},  is well-posed. To this end, we write the system (\ref{MTD1})--(\ref{MTD3}) in an abstract form, using the notation $U:=(w,w_t,\psi,\psi_t,\varphi,\varphi_t)^T$, which leads to the the following Cauchy problem:
\begin{equation}\label{sst}
\left\{
\begin{array}{l}
\displaystyle \frac{dU}{dt}=\mathcal{A}U+ \mathcal{B}U, \hspace{.5cm} \mbox{for}\hspace{0.2cm} t>0,\vspace{.2cm}\\
U(0)=U_0,
\end{array}%
\right.
\end{equation}%
considered in the Hilbert  space $$\mathcal{H}:=H_0^1(\Omega)\times L^2(\Omega)\times H^1_{0}(\Omega)\times L^2(\Omega)\times H^1_{0}(\Omega)\times L^2(\Omega),$$
with $\mathcal{A}$ is the differential operator
$$ \mathcal{A}=
\left(
\begin{array}{cccccc}
0& \mathcal{I} & 0 & 0 & 0 & 0  \\
\frac{K}{\rho_1} \Delta & 0 & \frac{K}{\rho_1}\partial_x & 0 & \frac{K}{\rho_1}\partial_y& 0 \\
0 & 0 & 0 & \mathcal{I} & 0 & 0 \\
-\frac{k}{\rho_2} \partial _x & 0 & \mathcal{A}_1 & 0 & \frac{D}{\rho_2}\left(\frac{1+\mu}{2}\right)\partial_x\partial_y & 0 \\
0 & 0 & 0 & 0 & 0 & \mathcal{I} \\
-\frac{k}{\rho_2}\partial _{y} & 0  & \frac{D}{\rho_2}\left( \frac{1+\mu}{2} \right)\partial_{x}\partial_{y} & 0 &  \mathcal{A}_2 & 0 \\
\end{array}%
\right),
$$
 where $\mathcal{A}_{i}\  (i=1,2)$ are the differential operators defined by
\begin{equation*}
\mathcal{A}_1=\frac{D}{\rho_2} \left[  \partial^2_x+\left( \frac{1-\mu}{2}\right)\partial^2_y\right]-\frac{K}{\rho_2}\mathcal{I},
\end{equation*}
\begin{equation*}
\mathcal{A}_2=\frac{D}{\rho_2}\left[ \left( \frac{1-\mu}{2} \right) \partial^2_x+\partial^2_y\right]- \frac{K}{\rho_2} \mathcal{I}.
\end{equation*}
In the above $\mathcal{I}$ denotes the identity operator, and $$\mathcal{D}(\mathcal{A})=\left((H^2(\Omega)\cap H^1_0(\Omega))\times H^1_0(\Omega)\right)^3,$$
stands for the domain of $\mathcal{A}$ $:\mathcal{D}(\mathcal{A})\subset \mathcal{H}	\rightarrow \mathcal{H}$.\\
The operator $\mathcal{B}$ includes the nonlinear damping terms, and it is given by
\begin{equation*}
\displaystyle \mathcal{B}(U):=\mathcal{B}
\left(
\begin{array}{c}
u_1 \\
u_2 \\
u_3\\
u_4\\
u_5\\
u_6%
\end{array}%
\right)
=\left(
\begin{array}{c}
0 \\
0 \\
0 \\
\chi_1(u_4)\\
0 \\
\chi_2(u_6)%
\end{array}%
\right).
\end{equation*}%
The domain $D(\mathcal{B})$ of the operator $\mathcal{B}$ is given by $D(\mathcal{B})=\lbrace  U \in \mathcal{H} \ ;\mathcal{B}(U) \in\mathcal{H}\rbrace$. 
 We endow the Hilbert space $\mathcal{H}$ with the inner product  given by
\begin{equation}\begin{array}{lll}\label{prodct}
\displaystyle <U,V>_{\mathcal{H}}&=& \displaystyle\rho_1 \int_{\Omega} u_2 v_2 dx\  dy+\rho_2\int_{\Omega} u_4 v_4 dx dy+\rho_2 \int_{\Omega} u_6 v_6 dx \ dy\\ &+& \displaystyle D\int_{\Omega} u_{3_x} v_{3_x} dx \ dy +D\int_{\Omega} u_{5_y} v_{5_y} dx \ dy\\ &+&\displaystyle K \int_{\Omega} \left( u_3+u_{1_x}\right) \left( v_3+v_{1_x}\right)dx dy+ K \int_{\Omega} \left( u_5+u_{1_y}\right) \left( v_5+v_{1_y}\right)dx dy  \\
 &+&\displaystyle D\left(\frac{1-\mu}{2}\right) \int_{\Omega}\left( u_{3_y}+u_{5_x}\right) \left( v_{3_y}+v_{5_x}\right)dx dy +D\mu  \int_{\Omega} u_{3_x}v_{5_y} dx dy  
 \\
 &+&\displaystyle D\mu  \int_{\Omega} u_{5_y}v_{3_x} dx dy, 
\end{array}\end{equation}
and the norm 
\begin{eqnarray*}
\|U\|_{\mathcal{H}}^2&=&\displaystyle\rho_1\|u_2\|^2 + \rho_2 \|u_4\|^2 +\rho_2\|u_6\|^2 +D\|u_{3x}\|^2+ D\|u_{5y}\|^2+K\|u_3+u_{1_x}\|^2 \\&+& \displaystyle K  \|u_5+u_{1_y}\|^2+D \left(\frac{1-\mu}{2}\right)\|u_{3_y}+u_{5_x}\|^2
+ 2 D\mu \int_{\Omega} u_{3_x} u_{5_y} dx dy, 
\end{eqnarray*}
for all $U=(u_1,u_2,u_3,u_4,u_5,u_6)^T$ and $V=(v_1,v_2,v_3,v_4,v_5,v_6)^T$.\\
Using the Korn and the Poincar\'e inequalities,  we easily show that $\|.\|_{\mathcal{H}}$ is equivalent to the usual norm in $\mathcal{H}$.\\
Moreover,
 the energy  associated with the solution of the system \eqref{MTD1}--\eqref{MTD3} is defined  as follows:
\begin{equation}
\begin{array}{c}\label{en}
 E(t)=E(t,w,\psi,\varphi):=\displaystyle\frac{1}{2} \int_{\Omega} \big[  \rho_1 |w_t|^2+\rho_2 |\psi_t |^2+ \rho_2 |\varphi_t |^2 +K|\psi+w_x |^2+K |\varphi+w_y |^2  \\ + \displaystyle\ D|\psi_x |^2+D|\varphi_y |^2
 + D\left( \frac{1-\mu}{2}\right) |\psi_y+ \varphi_x |^2+ 2D \mu \psi_x \varphi_y \big] dx dy. 
\end{array}
\end{equation}
\begin{remark}
Note that the energy $E(t)$ defines a norm in $\mathcal{H}$ which is equivalent to $\|.\|_{\mathcal{H}}$.
\end{remark}
In the following theorem, we formulate our result on the existence and uniqueness of the solution of (\ref{MTD1})--(\ref{MTD3}) associated with \eqref{BD} and \eqref{IC-new}.
\begin{theorem}\label{thm1}
Assume that $(H_{0})$  is satisfied. Then, the operator $\mathcal{
A}+\mathcal{B}$ generates a continuous semi-group $(\mathcal{T}(t))_{t\geq 0}$ on $%
\mathcal{H}$. Moreover,  for all
initial data $U_{0}\in \mathcal{H}$, the Cauchy problem \eqref{sst} has a unique solution $U\in \mathcal{C}([0,\infty );\mathcal{H}),$ and 
 for all initial data $U_{0}\in D(\mathcal{A})$, the
solution $U\in L^{\infty }([0,\infty );D(\mathcal{A}))\cap W^{1,\infty
}([0,\infty );\mathcal{H}).$
\end{theorem}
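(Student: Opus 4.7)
The plan is to put the Cauchy problem (\ref{sst}) into the framework of nonlinear semigroups of contractions generated by maximal monotone operators in a Hilbert space (Brezis' theorem), by showing that the operator $\mathcal{M}:=-(\mathcal{A}+\mathcal{B})$ is maximal monotone on $\mathcal{H}$ with dense domain. Once this is established, the two regularity statements follow automatically from the general theory: a unique strong solution for initial data in $\mathcal{H}$ by density and contraction, and the $L^{\infty}(D(\mathcal{A}))\cap W^{1,\infty}(\mathcal{H})$ bound for data in $D(\mathcal{A})$ from the standard bound $\|\mathcal{M}U(t)\|\le \|\mathcal{M}U_0\|$.

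The first step is to check that $-\mathcal{A}$ is monotone. For $U\in D(\mathcal{A})$, I would compute $(\mathcal{A}U,U)_{\mathcal{H}}$ directly: each term in $\mathcal{A}U$ is paired, via the definition of the inner product on $\mathcal{H}$, with the corresponding derivative contribution in $U$, and an integration by parts is performed. The Dirichlet boundary condition $u_1=u_3=u_5=0$ on $\Gamma$ kills every boundary integral, and the cross-derivative contributions coming from the off-diagonal entries $\frac{D}{\rho_2}\frac{1+\mu}{2}\partial_x\partial_y$ match exactly the mixed terms $D\mu\int u_{3_x}u_{5_y}$ and $D\frac{1-\mu}{2}\int(u_{3_y}+u_{5_x})^2$ in the inner product, producing $(\mathcal{A}U,U)_{\mathcal{H}}=0$. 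For $-\mathcal{B}$ the monotonicity is immediate: the only nonzero components of $\mathcal{B}(U)-\mathcal{B}(V)$ live on the fourth and sixth coordinates, so
\[
(-\mathcal{B}(U)+\mathcal{B}(V),U-V)_{\mathcal{H}}=\rho_2\!\int_{\Omega}(\chi_1(u_4)-\chi_1(v_4))(u_4-v_4)\,dx\,dy+\rho_2\!\int_{\Omega}(\chi_2(u_6)-\chi_2(v_6))(u_6-v_6)\,dx\,dy\ge 0
\]
by the monotone-increasing character of $\chi_i$ assumed in $(H_0)$.

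The main obstacle will be the maximality, i.e.\ the range condition $R(I+\mathcal{M})=\mathcal{H}$: for each $F=(f_1,\ldots,f_6)^T\in\mathcal{H}$ we must solve $U-\mathcal{A}U-\mathcal{B}U=F$ in $D(\mathcal{A})\cap D(\mathcal{B})$. I would first eliminate algebraically the velocity components $u_2,u_4,u_6$ (expressing $u_2=u_1-f_1$, $u_4=u_3-f_3$, $u_6=u_5-f_5$), reducing the problem to a coupled stationary system of three elliptic equations for $(u_1,u_3,u_5)\in (H_0^1(\Omega))^3$, of the form
\[
u_i-L_i(u_1,u_3,u_5)+\chi_i(u_i-f_{2i-1})=g_i,\qquad i\in\{1,3,5\},
\]
(with $\chi$ acting trivially when $i=1$), where $L_i$ contains the second-order principal part of $\mathcal{A}$. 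The associated operator on $(H_0^1(\Omega))^3$ is bounded, hemicontinuous and strictly monotone: coercivity comes from Korn's and Poincar\'e's inequalities applied to the bilinear form associated with $L_i$ (exactly the form used to define $\|\cdot\|_{\mathcal{H}}$), and the growth bounds $|\chi_i(s)|\le c_2|s|$ for $|s|\ge 1$ in $(H_0)$ guarantee that $\chi_i(u_i-f_{2i-1})\in L^2(\Omega)\subset H^{-1}(\Omega)$ whenever $u_i\in H_0^1(\Omega)$. A direct application of the Minty-Browder theorem then produces a unique weak solution, and classical elliptic regularity promotes it to $H^2(\Omega)\cap H_0^1(\Omega)$, hence to $U\in D(\mathcal{A})$.

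Finally, $D(\mathcal{M})\supset D(\mathcal{A})$ is dense in $\mathcal{H}$ since smooth compactly supported functions are dense in each component, and the same linear growth condition on $\chi_i$ at infinity yields $D(\mathcal{B})=\mathcal{H}$, so that $D(\mathcal{M})=D(\mathcal{A})$. Brezis' theorem on maximal monotone operators then provides the nonlinear contraction semigroup $(\mathcal{T}(t))_{t\ge 0}$ and both regularity statements of Theorem \ref{thm1}. The core difficulty is really concentrated in the Minty-Browder step, because the coupling induced by the plate cross-derivatives $\partial_x\partial_y$ prevents a decoupled treatment of $(u_3,u_5)$ and forces one to work with the full Korn-type form of the inner product on $\mathcal{H}$.
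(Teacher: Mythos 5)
Your proposal is correct in outline, but it takes a genuinely different route from the paper. The paper splits the generator: it treats $\mathcal{A}$ alone as a maximal dissipative \emph{linear} operator (Lemma \ref{lemmonoton} gives $(\mathcal{A}U,U)_{\mathcal{H}}\leq 0$, Lemma \ref{lemsurjective} proves surjectivity of $\mathcal{I}-\mathcal{A}$ by Lax--Milgram applied to the purely linear resolvent system \eqref{lax}), and then treats $\mathcal{B}$ as a globally Lipschitz perturbation (Lemma \ref{lemlipschitz}), concluding by the Lipschitz-perturbation version of the nonlinear Hille--Yosida theorem. You instead fold $\mathcal{B}$ into the operator and prove that $-(\mathcal{A}+\mathcal{B})$ is itself maximal monotone, which forces you to solve the \emph{nonlinear} stationary problem $U-\mathcal{A}U-\mathcal{B}U=F$ via Minty--Browder rather than the linear one via Lax--Milgram; the monotonicity of $-\mathcal{B}$ then comes only from the monotone-increasing character of $\chi_i$, not from any Lipschitz bound. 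Each approach buys something: the paper's is shorter where it works, but its Lemma \ref{lemlipschitz} silently requires $\chi_i$ to be globally Lipschitz, which does \emph{not} follow from $(H_0)$ (near the origin $(H_0)$ only gives continuity, monotonicity and the bound $|\chi_i(s)|\leq c_2 g^{-1}(|s|)$, compatible with infinite slope at $0$); your monotone-operator route is robust exactly in that regime and is the argument one would need if $\chi_i$ fails to be Lipschitz. Two small caveats on your side: (i) you must be careful with the sign convention for $\mathcal{B}$ (as printed in the paper, $\mathcal{B}$ carries $+\chi_i(u_{2i+2})$, which would make $-\mathcal{B}$ \emph{anti}-monotone; the physically correct operator carries $-\rho_2^{-1}\chi_i$, which is what your displayed inequality implicitly assumes); and (ii) the promotion of the Minty--Browder weak solution to $H^2(\Omega)\cap H^1_0(\Omega)$ on a rectangle with corners and with the $\partial_x\partial_y$ coupling deserves at least a word of justification — though the paper's Lax--Milgram step has exactly the same unaddressed regularity issue. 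Your claim that $(\mathcal{A}U,U)_{\mathcal{H}}=0$ exactly is the correct statement for the conservative part under Dirichlet conditions and is cleaner than the paper's computation, which only asserts $\leq 0$ after a somewhat garbled cancellation.
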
 
We first state and prove the following lemmas which will be useful to deduce the well-posedness result in Theorem  \ref{thm1}.
\begin{lemma}\label{lemmonoton}
For $U=(u_1,u_2,u_3,u_4,u_5,u_6) \in D(\mathcal{A})$, we have $(\mathcal{A}U,U)_{\mathcal{H}}=0$.
\end{lemma}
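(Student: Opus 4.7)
The plan is to verify the dissipativity of $\mathcal{A}$ by a direct calculation: for $U=(u_1,\ldots,u_6)^{T}\in D(\mathcal{A})$ I compute the vector $\mathcal{A}U$ component by component from the matrix definition, substitute these components into the explicit formula for $(\cdot,\cdot)_{\mathcal{H}}$ with $V=U$, and then carry out integration by parts on every term that involves a spatial derivative. The Dirichlet boundary conditions $u_1=u_3=u_5=0$ on $\Gamma$, which are part of the definition of $D(\mathcal{A})$, kill all boundary contributions, so each integration by parts produces no boundary terms.

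More precisely, the $u_2$-row of the inner product pairs $\rho_1 u_2$ with the second component $\frac{K}{\rho_1}\Delta u_1+\frac{K}{\rho_1}\partial_x u_3+\frac{K}{\rho_1}\partial_y u_5$ of $\mathcal{A}U$; integrating by parts moves one spatial derivative off $u_1,u_3,u_5$ onto the derivatives $u_{2x},u_{2y}$, producing terms of the form $-K\int_{\Omega}(u_3+u_{1_x})u_{2_x}$ and $-K\int_{\Omega}(u_5+u_{1_y})u_{2_y}$. These are exactly (up to sign) what one obtains when $V=U$ from the $K$-terms $K\int(u_3+u_{1_x})(v_3+v_{1_x})$ and $K\int(u_5+u_{1_y})(v_5+v_{1_y})$ once the other pieces (the $u_4$ and $u_6$ rows) are added in, because the third and fifth rows of $\mathcal{A}U$ are simply $u_4$ and $u_6$. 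The same mechanism handles the coupling $-\frac{K}{\rho_2}\partial_x u_1$ and $-\frac{K}{\rho_2}\partial_y u_1$ appearing in rows four and six: pairing them with $\rho_2 u_4$ and $\rho_2 u_6$ and integrating by parts reproduces the remaining halves of the $K$-terms in the inner product, with the opposite sign.

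The $D$-terms are similar but require more bookkeeping. Pairing $\rho_2 u_4$ with $\mathcal{A}_1 u_3+\frac{D}{\rho_2}\bigl(\frac{1+\mu}{2}\bigr)\partial_x\partial_y u_5$ and $\rho_2 u_6$ with $\frac{D}{\rho_2}\bigl(\frac{1+\mu}{2}\bigr)\partial_x\partial_y u_3+\mathcal{A}_2 u_5$, and integrating by parts, gives
\begin{align*}
&-D\!\int_{\Omega}\!u_{3_x}u_{4_x}-D\tfrac{1-\mu}{2}\!\int_{\Omega}\!u_{3_y}u_{4_y}-D\tfrac{1+\mu}{2}\!\int_{\Omega}\!u_{5_y}u_{4_x}\\
&\quad{}-D\!\int_{\Omega}\!u_{5_y}u_{6_y}-D\tfrac{1-\mu}{2}\!\int_{\Omega}\!u_{5_x}u_{6_x}-D\tfrac{1+\mu}{2}\!\int_{\Omega}\!u_{3_x}u_{6_y},
\end{align*}
together with the $-\frac{K}{\rho_2}\mathcal{I}$ contributions from $\mathcal{A}_1,\mathcal{A}_2$, which combine with the already-mentioned $K$-terms. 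On the other hand, the $D$-block of $(U,U)_{\mathcal{H}}$ with $V=U$ pairs, by differentiation in $t$ (or rather by the chain of derivatives hidden in $V=U$), exactly the conjugate expressions, so a second integration by parts recombines $\mu\int u_{3_x}u_{6_y}+\mu\int u_{5_y}u_{4_x}$ symmetrically, and every $D$-term cancels pairwise.

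The main obstacle I expect is purely accounting: there are a dozen integrals, several contain the cross-derivative $\partial_x\partial_y$ and the Poisson coefficient $\mu$, and the symmetrization relies on the particular coefficients $(1-\mu)/2$, $(1+\mu)/2$, $\mu$ appearing in both $\mathcal{A}$ and in the inner product. I would organize the calculation into three blocks (the $K$-block coming from rows 1--3, the $D$-block coming from the operators $\mathcal{A}_1,\mathcal{A}_2$, and the mixed $(1+\mu)/2$-cross-derivative block) and check cancellation inside each block. Once the bookkeeping is done, all contributions telescope and one obtains $(\mathcal{A}U,U)_{\mathcal{H}}=0$, which is in particular $\leq 0$, as claimed.
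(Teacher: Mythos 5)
Your computation is correct, and the approach (expand $\mathcal{A}U$ componentwise, substitute into the explicit inner product, integrate by parts using the Dirichlet conditions) is the same one the paper uses; the difference is in how far the bookkeeping is carried. The paper's own proof stops short of full cancellation: after integrating by parts it is left with the residual terms $K\int_{\Omega}\psi_t\psi+K\int_{\Omega}\varphi_t\varphi+D\mu\bigl[\int_{\Omega}\psi_{tx}\varphi_y+\int_{\Omega}\varphi_{ty}\psi_x\bigr]$ and closes by asserting these are bounded by $\frac{d}{dt}E(t)\le 0$ --- a step that is not justified (those quantities do not appear in $E$ and have no sign). In fact the $K\int\psi_t\psi$ and $K\int\varphi_t\varphi$ terms arise there only because the $-\frac{K}{\rho_2}\mathcal{I}$ part of $\mathcal{A}_1$, $\mathcal{A}_2$ was dropped in the pairing with $\psi_t$, $\varphi_t$; once it is kept, these terms cancel against $K\int(\psi+w_x)\psi_t$ and $K\int(\varphi+w_y)\varphi_t$, and the remaining cross terms cancel because $-\frac{1+\mu}{2}+\frac{1-\mu}{2}+\mu=0$, exactly as you organize it. So your version actually reaches the clean identity $(\mathcal{A}U,U)_{\mathcal{H}}=0$, which is the correct statement and makes the skew-adjoint structure of $\mathcal{A}$ (all dissipation coming from $\mathcal{B}$) transparent. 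One small imprecision: to kill the boundary terms you need not only $u_1=u_3=u_5=0$ on $\Gamma$ but also $u_2=u_4=u_6=0$ there (e.g.\ in $\int_{\Omega}u_2\Delta u_1$ and $\int_{\Omega}u_4 u_{3xx}$ the derivative lands on the velocity component); this is available since $D(\mathcal{A})=\bigl((H^2\cap H^1_0)\times H^1_0\bigr)^3$ places the velocity components in $H^1_0(\Omega)$, but you should cite it explicitly.
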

\begin{proof}
Let $U=(u_1,u_2,u_3,u_4,u_5,u_6) \in D(\mathcal{A})$. Then, we have 
$$
\begin{array}{lcl}
\displaystyle <\mathcal{A}U,U>_{\mathcal{H}}&=&\displaystyle < K\Delta u_1+Ku_{3_x}+Ku_{5_y},u_2>  \\ 
&+&\displaystyle <-Ku_{1_x}+D\left(u_{3_{xx}}+\left(\frac{1-\mu}{2} \right)u_{3_{yy}}\right)-Ku_3+D\left(\frac{1+\mu}{2}\right)u_{5_{xy}}, u_4>\\
&+& \displaystyle <-Ku_{1_y}+D\left(\frac{1+\mu}{2}\right)u_{3_{xy}}+ D\left(\frac{1-\mu}{2}\right)u_{5_{xx}}+Du_{5_{yy}}-Ku_5,u_6> \\&+&\displaystyle D<u_{4_x},u_{3_x}>+ D<u_{6_y},u_{5_y}>+K <u_4+u_{2_x},u_3+u_{1_x}> \\
 &+&\displaystyle K  <u_6+u_{2y},u_5+u_{1y}>+ D \left(\frac{1-\mu}{2}\right) <u_{4_y}+u_{6_x},u_{3_y}+u_{5_x}> \\
&+&\displaystyle D\mu<u_{4_x},u_{5_y}>  +D\mu<u_{3_x},u_{6_y}>,
\end{array} $$
where $<.,.>_{\mathcal{H}}$ is given by \eqref{prodct} and $<.,.>$ is the inner product in $L^2(\Omega)$.\\ 
Using integration by parts and the boundary conditions \eqref{BD}, we obtain $ \Re \left(<\mathcal{A}U,U>_{\mathcal{H}}\right)=0$ and we conclude that $\mathcal{A}$  is a dissipative operator.
\end{proof}
\begin{lemma}\label{lemsurjective}
The operator $\mathcal{I}-\mathcal{A}$ is surjective from $\mathcal{D}(\mathcal{A})$ onto $\mathcal{H}$.
\end{lemma}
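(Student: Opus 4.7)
The plan is to apply the Lax--Milgram theorem to a suitably chosen variational formulation, and then invoke elliptic regularity. Fix $F=(f_1,\dots,f_6)^T\in\mathcal{H}$; we seek $U=(u_1,\dots,u_6)^T\in\mathcal{D}(\mathcal{A})$ with $(\mathcal{I}-\mathcal{A})U=F$. The odd-indexed components of this system read $u_2=u_1-f_1$, $u_4=u_3-f_3$, $u_6=u_5-f_5$, which determine $u_2,u_4,u_6$ once $u_1,u_3,u_5$ are known; since $f_1,f_3,f_5\in H_0^1(\Omega)$, the boundary condition on the velocities reduces to requiring $u_1,u_3,u_5\in H_0^1(\Omega)$.

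First, I would substitute these three identities into the remaining equations (those coming from the second, fourth and sixth rows of $\mathcal{A}$), multiply the resulting three elliptic equations respectively by $\rho_1$, $\rho_2$, $\rho_2$, and obtain an elliptic system in $(u_1,u_3,u_5)$ with right-hand side $(\rho_1(f_1+f_2),\rho_2(f_3+f_4),\rho_2(f_5+f_6))\in L^2(\Omega)^3$. Then I would test this system against $(v_1,v_3,v_5)\in H_0^1(\Omega)^3$ and integrate by parts, producing a bilinear form $a$ on $V:=H_0^1(\Omega)^3$ that, by design, coincides (after symmetrization of the cross terms) with
\begin{equation*}
a((u_1,u_3,u_5),(v_1,v_3,v_5))=\rho_1\!\int_\Omega u_1v_1+\rho_2\!\int_\Omega u_3v_3+\rho_2\!\int_\Omega u_5v_5+(\text{the bilinear part of }\|\cdot\|_{\mathcal{H}}^2\text{ in }u_1,u_3,u_5),
\end{equation*}
together with a continuous linear form $\ell(v)=\rho_1\langle f_1+f_2,v_1\rangle+\rho_2\langle f_3+f_4,v_3\rangle+\rho_2\langle f_5+f_6,v_5\rangle$.

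Next, continuity of $a$ on $V\times V$ follows from Cauchy--Schwarz on each term. Coercivity is the crucial point: the diagonal form $a(U',U')$ is, up to the positive lower-order contributions $\rho_i\|u_i\|_{L^2}^2$, exactly the square of the $\mathcal{H}$-norm restricted to the "position" components $(u_1,u_3,u_5)$; the authors have already noted that this quadratic form is equivalent to $\|u_1\|_{H^1}^2+\|u_3\|_{H^1}^2+\|u_5\|_{H^1}^2$ via the Korn and Poincar\'e inequalities. Lax--Milgram then yields a unique weak solution $(u_1,u_3,u_5)\in V$.

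Finally, I would upgrade regularity: rewriting each equation as $-\alpha\Delta u_i +\text{lower order}= F_i\in L^2(\Omega)$ on a rectangular domain with Dirichlet data, classical $H^2$ elliptic regularity gives $u_1,u_3,u_5\in H^2(\Omega)\cap H_0^1(\Omega)$; defining $u_2,u_4,u_6$ by the substitution above puts $U$ in $\mathcal{D}(\mathcal{A})$, and by construction $(\mathcal{I}-\mathcal{A})U=F$. The main obstacle I anticipate is the coercivity verification, because the cross terms $D\mu\int \partial_x u_3\,\partial_y u_5$ and the shear term $D(\tfrac{1-\mu}{2})\int(\partial_y u_3+\partial_x u_5)^2$ are not manifestly sign-definite when viewed individually; the argument must exploit the condition $0<\mu<1/2$ and the Korn-type estimate, exactly as in the equivalence of norms already cited from \cite{wellp}.
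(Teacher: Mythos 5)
Your proposal follows essentially the same route as the paper: substitute the velocity relations $u_2=u_1-f_1$, $u_4=u_3-f_3$, $u_6=u_5-f_5$, reduce to an elliptic system in $(u_1,u_3,u_5)$, pass to a variational formulation on $H_0^1(\Omega)^3$, and conclude by Lax--Milgram. In fact you are more explicit than the paper on the two points it glosses over — the coercivity of the bilinear form (via $0<\mu<1/2$ and the Korn/Poincar\'e norm equivalence) and the $H^2$ elliptic regularity needed to place the solution in $\mathcal{D}(\mathcal{A})$ — so the argument is sound.
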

\begin{proof}
Given  any $W=(w_{1},w_{2},w_{3},w_{4},w_{5},w_{6})^T\in \mathcal{H}$, there exists \\ $V=(v_{1},v_{2},v_{3},v_{4},v_{5},v_{6})^T\in D(\mathcal{A})$ satisfying 
\begin{equation}\label{sdf}
(\mathcal{I}-\mathcal{A})V=W,
\end{equation}
that is,
 \begin{equation*}
  \left\{
\begin{array}{l}
v_1-v_2=w_1,\\
\displaystyle v_2-\frac{k}{\rho_1}\Delta v_1-\frac{k}{\rho_1} v_{3x}-\frac{k}{\rho_1}v_{5y}=w_2, \\
v_3-v_4=w_3,\\
\displaystyle v_4+\frac{k}{\rho_2}v_{1x}-\mathcal{A}_1v_3-\frac{D}{\rho_2} \left(\frac{1+\mu}{2}\right)v_{5xy}=w_4,\\
v_5-v_6=w_5,\\
\displaystyle v_6+\frac{k}{\rho_2} v_{1y}-\frac{D}{\rho_2}\left(\frac{1+\mu}{2}\right)v_{3xy}-\mathcal{A}_2v_4=w_6.
   \end{array}
 \right.
 \end{equation*}
The above system can be rewritten as follows: 
 \begin{equation*}
  \left\{
\begin{array}{l}
v_2=v_1-w_1,\\
\displaystyle v_1-\frac{k}{\rho_1}\Delta v_1-\frac{k}{\rho_1} v_{3x}-\frac{k}{\rho_1}v_{5y}=w_2+w_1, \\
v_4=v_3-w_3,\\
\displaystyle v_3+\frac{k}{\rho_2}v_{1x}-\mathcal{A}_1v_3-\frac{D}{\rho_2}\left(\frac{1+\mu}{2}\right) v_{5xy}=w_4+w_3,\\
v_6=v_5-w_5,\\
\displaystyle v_5+\frac{k}{\rho_2} v_{1y}-\frac{D}{\rho_2}\left(\frac{1+\mu}{2}\right)v_{3xy}-\mathcal{A}_2v_4=w_6 +w_5.
   \end{array}
 \right.
 \end{equation*}
 Now, it suffices to solve the following system, for $v_1$, $v_3$ and $v_5$,
  \begin{equation} \label{lax}
  \left\{
\begin{array}{l}
\displaystyle v_1-\frac{k}{\rho_1}\Delta v_1 =w_2+w_1+\frac{k}{\rho_1} v_{3x}+\frac{k}{\rho_1}v_{5y}, \\
\displaystyle v_3-\mathcal{A}_1v_3=w_4+w_3-\frac{k}{\rho_2}v_{1x}+\frac{D}{\rho_2}\left(\frac{1+\mu}{2}\right) v_{5xy},\\
\displaystyle v_5=w_6 +w_5-\frac{k}{\rho_2} v_{1y} +\frac{D}{\rho_2}\left(\frac{1+\mu}{2}\right)v_{3xy}+\mathcal{A}_2v_4.
   \end{array}
 \right.
 \end{equation}
 Multiplying $\eqref{lax}_1$ by $v_1$,  $\eqref{lax}_2$ by $v_3$, $\eqref{lax}_3$ by $v_5$ and integrating over $\Omega$, we obtain the following  variational formulation:
\begin{equation*}
\begin{array}{lrl}
&&\displaystyle \int_{\Omega} |v_1|^2 dx dy -\frac{k}{\rho_1}\int_{\Omega} \Delta v_1 v_1 dx dy  - \frac{k}{\rho_1} \int_{\Omega} \Big( v_{3x} v_1  +v_{5y} v_1\Big) dx dy \\
&& \displaystyle+\int_{\Omega}|v_3|^2 dx dy -\int_{\Omega} \mathcal{A}_1 v_3 v_3dx dy +\frac{k}{\rho_2}\int_{\Omega}v_{1x} v_3 dx dy-\frac{D}{\rho_2}\left(\frac{1+\mu}{2}\right)\int_{\Omega} v_{5xy}v_3 dx dy   \\
&&\displaystyle \int_{\Omega} |v_5|^2 dx dy+ \frac{k}{\rho_2}\int_{\Omega} v_{1y}v_5 dx dy  -\frac{D}{\rho_2}\left(\frac{1+\mu}{2}\right)\int_{\Omega} v_{3xy}v_5 dx dy -\int_{\Omega } \mathcal{A}_2 v_4 v_5 dx dy  \\&&= \displaystyle \int_{\Omega} (w_2+w_1)v_1 dx dy+ \int_{\Omega}( w_4+w_3 ) v_3 dx dy +\int_{\Omega} (w_6+w_5)v_5 dx dy
\end{array} 
\end{equation*}
Using integration by parts and the boundary conditions  with respect to $x$ and $y$, the above variational formulation yields the following form $$a((v_1,v_3,v_5), (v_1,v_3,v_5))=l(v_1,v_3,v_5).$$
 Now, thanks to the  Lax-Milgram theorem (see \cite{B}), we deduce that the equation \eqref{sdf} admits a unique solution $V\in D(\mathcal{A}).$ Therefore $\mathcal{A}$ is a maximal operator.
\end{proof}
\begin{lemma}\label{lemlipschitz}
The operator $\mathcal{B}$ is  Lipschitz and continuous on $\mathcal{H}$.
\end{lemma}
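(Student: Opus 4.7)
My approach has two parts. First, I would exploit the block structure of $\mathcal{B}$: since $\mathcal{B}(U)$ has nonzero components only in slots $4$ and $6$, which correspond to the $\rho_2\int_{\Omega}|\cdot|^2$ terms in $\|\cdot\|_{\mathcal{H}}^{2}$, no derivative terms or cross terms are activated in the difference $\mathcal{B}(U)-\mathcal{B}(V)$, and the Hilbert-norm computation collapses to
\begin{equation*}
\|\mathcal{B}(U)-\mathcal{B}(V)\|_{\mathcal{H}}^{2}=\rho_2\int_{\Omega}|\chi_1(u_4)-\chi_1(v_4)|^{2}\,dx\,dy+\rho_2\int_{\Omega}|\chi_2(u_6)-\chi_2(v_6)|^{2}\,dx\,dy.
\end{equation*}
This reduces the Lipschitz property on $\mathcal{H}$ to a Lipschitz estimate on each Nemytskii operator $u\mapsto\chi_i(u):L^2(\Omega)\to L^2(\Omega)$.

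Second, I would establish a pointwise bound $|\chi_i(s)-\chi_i(t)|\leq L|s-t|$ valid on all of $\mathbb{R}$ by splitting into three cases governed by $(H_0)$: for $|s|,|t|\geq 1$, the linear upper bound $|\chi_i|\leq c_2|\cdot|$ together with monotonicity gives the constant $c_2$ directly; for $|s|,|t|\leq 1$, the $\mathcal{C}^1$ regularity of $g$ and the sandwich $c_1 g(|s|)\leq|\chi_i(s)|\leq c_2 g^{-1}(|s|)$ combined with monotonicity yield a uniform Lipschitz constant on the compact $[-1,1]$; the mixed case $|s|\leq 1\leq|t|$ is handled by inserting the endpoint $\pm 1$ and combining the two previous bounds. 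Squaring and integrating then gives
\begin{equation*}
\|\mathcal{B}(U)-\mathcal{B}(V)\|_{\mathcal{H}}\leq C\,\|U-V\|_{\mathcal{H}},
\end{equation*}
from which continuity of $\mathcal{B}$ on $\mathcal{H}$ follows as an immediate corollary.

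\textbf{Main obstacle.} The delicate step is the pointwise Lipschitz bound near the origin: the hypothesis $(H_0)$ controls only the \emph{magnitude} of $\chi_i(s)$ via $g$ and $g^{-1}$, not the modulus of continuity of $\chi_i$, so upgrading the growth estimate to a genuine Lipschitz bound on $[-1,1]$ depends essentially on the $\mathcal{C}^1$ regularity of $g$ and on the monotone structure of $\chi_i$ to rule out pathological oscillations. If one implicitly assumes $\chi_i\in\mathcal{C}^1(\mathbb{R})$ with a uniformly bounded derivative — as is customary in the Alabau-Boussouira framework — the mean-value theorem makes the pointwise bound immediate and the whole lemma a one-line estimate.
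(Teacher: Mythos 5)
Your reduction to the two Nemytskii operators is exactly what the paper does, and your displayed identity for $\|\mathcal{B}(U)-\mathcal{B}(V)\|_{\mathcal{H}}^{2}$ is correct (indeed slightly more careful than the paper's version, which drops the weights $\rho_2$ and the squares). The gap is in the second part: the pointwise Lipschitz bound for $\chi_i$ does not follow from $(H_0)$, and your case analysis does not close it. Take $s>t\geq 1$: monotonicity and the sandwich give only
$0\leq\chi_i(s)-\chi_i(t)\leq c_2 s-c_1 t=c_2(s-t)+(c_2-c_1)\,t$,
and the second term does not vanish as $s\to t$. A continuous, monotone increasing function trapped between $c_1|s|$ and $c_2|s|$ can still contain arbitrarily steep segments (a piecewise-linear staircase oscillating inside the envelope), so it need not be Lipschitz. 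The same objection applies on $[-1,1]$: the $\mathcal{C}^1$ regularity in $(H_0)$ is a hypothesis on $g$, not on $\chi_i$, and monotonicity of $\chi_i$ does not rule out steep climbs between the two growth envelopes. So cases (a) and (b) of your split, as written, would fail, and the mixed case inherits the failure.

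That said, your \emph{Main obstacle} paragraph diagnoses the situation correctly, and the paper's own proof makes precisely the same leap: it asserts $\|\chi_1(\psi_t)-\chi_1(\tilde{\psi}_t)\|_{L^2(\Omega)}\leq C_{\chi}\|\psi_t-\tilde{\psi}_t\|_{L^2(\Omega)}$ ``using the assumption $(H_0)$'' with no further justification of where $C_{\chi}$ comes from. The honest fix is the one you name at the end: an explicit global Lipschitz (or $\mathcal{C}^1$ with bounded derivative) hypothesis on $\chi_i$ itself, which is standard in this framework and is evidently what the authors intend by the constant $C_{\chi}$. With that hypothesis added, your argument and the paper's coincide and the lemma is a one-line estimate; without it, neither closes.
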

\begin{proof}
For all $U=(w,w_t,\psi,\psi_t,\varphi,\varphi_t)^{T}\in H $, $\tilde{U}=(\tilde{w},\tilde{w}_t,\tilde{\psi},\tilde{\psi}_t,\tilde{\varphi},\tilde{\varphi}_t)^T\in \mathcal{H}$ and using the assumption $(H_0)$, we have 
\begin{eqnarray*}
\Vert \mathcal{B}U-\mathcal{B}\tilde{U}\Vert_{H}^2 & =&\rho_1\Vert \chi_1(\psi_t)-\chi_1(\tilde{\psi}_t)\Vert_{L^2(\Omega)}^2  +\rho_2\Vert \chi_2(\varphi_t)-\chi_2(\tilde{\varphi}_t)\Vert_{L^2(\Omega)}^2\\
 &\leq &
C_{\chi_1, \chi_2} \left[\Vert \psi_t-\tilde{\psi}_t\Vert_{L^2(\Omega)}^2  +\Vert \varphi_t-\tilde{\varphi}_t \Vert_{L^2(\Omega)}^2\right]\\
&\leq& C_{\chi_1, \chi_2} \Vert U-\tilde{U} \Vert_{H}^2,
\end{eqnarray*}
where $C_{\chi_1, \chi_2}>0$.\\
Consequently, the operator $\mathcal{B}$ is Lipschitz.
Furthermore, the condition $\chi_i(0) = 0,   i\in \lbrace 1,2 \rbrace$ gives the continuity of the operator $\mathcal{B}$.
\end{proof}

Now, let us summarize the previous results. Thanks to Lemma \ref{lemmonoton} we have that $\mathcal{A}
$ is a dissipative operator. From Lemma \ref{lemsurjective} we showed that the operator  $\mathcal{A}$ is maximal and Lemma \ref{lemlipschitz} gives that the operator $\mathcal{B}$ is  Lipschitz and continuous. Finally, Thanks to  the nonlinear Hille-Yosida theorem (\cite{B}), we deduce that the Cauchy problem 
\eqref{sst} has a unique solution $U$ with respect to the regularity of the initial data $U_0$.

\section{\textbf{General asymptotic behavior}}\label{s4}
In this section, we discuss an upper energy estimate of the energy $E(t)$, given by \eqref{en}, that show in particular the decay to zero as $t \to \infty$.\\
Throughout the rest of this article, we will denote by $C$ a generic positive constant whose value may change from line to another.
\subsection{Dissipation of the energy}
We recall that the energy associated with system \eqref{MTD1}--\eqref{MTD3} is
defined by: 
\begin{equation}
\begin{array}{lrl}
 E(t)=E(t,w,\psi,\varphi)&:=&\displaystyle\frac{1}{2} \int_{\Omega} \big[  \rho_1 |w_t|^2+\rho_2 |\psi_t |^2+ \rho_2 |\varphi_t |^2 +K|\psi+w_x |^2+K |\varphi+w_y |^2  \\ &+& \displaystyle\ D|\psi_x |^2+D|\varphi_y |^2
 + D\left( \frac{1-\mu}{2}\right) |\psi_y+ \varphi_x |^2+ 2D \mu \psi_x \varphi_y \big] dx dy. 
\end{array}
\end{equation} 
The dissipation relationship is given in the following proposition.
\begin{proposition}
Let $U=(w,w_t,\psi,\psi_t,\varphi,\varphi_t)^T $ be the solution of \eqref{MTD1}--\eqref{MTD3}. Then, the energy $ E(t)$ satisfies the following dissipation relationship:
\begin{equation}\label{dissip}
E^{\prime}(t)=-\int_{\Omega}\left( \psi_t \chi_1( \psi_t)+ \varphi_t \chi_2( \varphi_t) \right) dx dy , \hspace{0.5cm} \forall \ t\geq 0,
\end{equation}
and, hence, we have
$$E(t)\leq E(0), \hspace{0.3cm} \forall\  t\geq 0.$$
\end{proposition}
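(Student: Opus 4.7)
The plan is a standard energy identity computation: multiply each of the three PDEs by the corresponding velocity, integrate over $\Omega$, and reassemble the resulting terms using the Dirichlet boundary conditions to recognize $\frac{d}{dt}E(t)$ plus the two nonlinear dissipation integrals.

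First I would test equation \eqref{MTD1} against $w_t$, \eqref{MTD2} against $\psi_t$, and \eqref{MTD3} against $\varphi_t$, and add. The acceleration terms $\rho_1 w_{tt} w_t$, $\rho_2 \psi_{tt}\psi_t$ and $\rho_2\varphi_{tt}\varphi_t$ immediately give $\tfrac{1}{2}\frac{d}{dt}\!\int_\Omega(\rho_1|w_t|^2+\rho_2|\psi_t|^2+\rho_2|\varphi_t|^2)$. The feedback terms give exactly $\int_\Omega(\psi_t\chi_1(\psi_t)+\varphi_t\chi_2(\varphi_t))$. Everything else must recombine into the time derivative of the elastic energy in \eqref{en}.

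Next I would handle the coupling through $K$. Using $w_t=0$ on $\Gamma$ (from \eqref{BD}), integration by parts gives $-K\int_\Omega(\psi+w_x)_x w_t = K\int_\Omega(\psi+w_x)w_{tx}$ and analogously for the $(\varphi+w_y)_y$ term. Adding the zeroth-order couplings $K(\psi+w_x)\psi_t$ and $K(\varphi+w_y)\varphi_t$ coming from \eqref{MTD2}--\eqref{MTD3}, the four terms combine as
\begin{equation*}
K\!\int_\Omega (\psi+w_x)(\psi_t+w_{tx}) + K\!\int_\Omega(\varphi+w_y)(\varphi_t+w_{ty}) = \tfrac{K}{2}\tfrac{d}{dt}\!\int_\Omega\bigl(|\psi+w_x|^2+|\varphi+w_y|^2\bigr).
\end{equation*}
The principal $D$-terms $-D\psi_{xx}\psi_t$ and $-D\varphi_{yy}\varphi_t$ integrate by parts into $\tfrac{D}{2}\tfrac{d}{dt}\!\int_\Omega(|\psi_x|^2+|\varphi_y|^2)$.

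The delicate step, and the one I expect to be the main bookkeeping obstacle, is the cross-coupling coming from the coefficients $\tfrac{1-\mu}{2}$ and $\tfrac{1+\mu}{2}$. I would split $\tfrac{1+\mu}{2}=\tfrac{1-\mu}{2}+\mu$ and treat the two pieces separately. For the $\mu$-piece, integration by parts (in $x$ for $\int\varphi_{xy}\psi_t$ and in $y$ for $\int\psi_{xy}\varphi_t$, with $\psi_t=\varphi_t=0$ on $\Gamma$) yields
\begin{equation*}
-D\mu\!\int_\Omega\!\bigl(\varphi_{xy}\psi_t+\psi_{xy}\varphi_t\bigr) = D\mu\!\int_\Omega\!\bigl(\varphi_y\psi_{tx}+\psi_x\varphi_{ty}\bigr) = D\mu\,\tfrac{d}{dt}\!\int_\Omega\!\psi_x\varphi_y,
\end{equation*}
which matches the $2D\mu\,\psi_x\varphi_y$ term in \eqref{en}. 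For the $\tfrac{1-\mu}{2}$-piece, I would combine the four contributions
\begin{equation*}
D\tfrac{1-\mu}{2}\!\int_\Omega\!\bigl(\psi_y\psi_{ty}+\varphi_x\varphi_{tx}\bigr) + D\tfrac{1-\mu}{2}\!\int_\Omega\!\bigl(\varphi_x\psi_{ty}+\psi_y\varphi_{tx}\bigr),
\end{equation*}
the first pair from $-D\tfrac{1-\mu}{2}\psi_{yy}\psi_t$ and $-D\tfrac{1-\mu}{2}\varphi_{xx}\varphi_t$, and the second pair from the remaining $\tfrac{1-\mu}{2}$ portion of the $\varphi_{xy}$ and $\psi_{xy}$ terms. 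This factors as $D\tfrac{1-\mu}{2}\int_\Omega(\psi_y+\varphi_x)(\psi_{ty}+\varphi_{tx}) = \tfrac{D}{2}\tfrac{1-\mu}{2}\tfrac{d}{dt}\!\int_\Omega|\psi_y+\varphi_x|^2$, matching the last kinetic-elastic term of \eqref{en}.

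Collecting all contributions gives exactly
\begin{equation*}
\tfrac{d}{dt}E(t) + \int_\Omega\!\bigl(\psi_t\chi_1(\psi_t)+\varphi_t\chi_2(\varphi_t)\bigr)\,dx\,dy = 0,
\end{equation*}
which is \eqref{dissip}. Finally, by $(H_0)$ the functions $\chi_i$ are monotone increasing with $\chi_i(0)=0$, so $s\,\chi_i(s)\ge 0$ for every $s\in\mathbb{R}$; hence $E'(t)\le 0$ and integrating from $0$ to $t$ yields $E(t)\le E(0)$. The computation at the level of strong solutions $U_0\in D(\mathcal{A})$ makes every integration by parts rigorous, and the conclusion extends to $U_0\in\mathcal{H}$ by the density argument underlying Theorem~\ref{thm1}.
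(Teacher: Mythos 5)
Your proposal is correct and follows essentially the same route as the paper: multiply \eqref{MTD1}--\eqref{MTD3} by $w_t$, $\psi_t$, $\varphi_t$ respectively, integrate over $\Omega$, use the Dirichlet boundary conditions to integrate by parts, and reassemble the terms into $\frac{d}{dt}E(t)$ plus the dissipation integrals. Your explicit splitting $\frac{1+\mu}{2}=\frac{1-\mu}{2}+\mu$ to recover the $2D\mu\,\psi_x\varphi_y$ and $\frac{1-\mu}{2}|\psi_y+\varphi_x|^2$ terms is exactly the bookkeeping the paper carries out (somewhat more tersely), so no further comparison is needed.
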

\begin{proof}
By multiplying the equations \eqref{MTD1}, \eqref{MTD2} and \eqref{MTD3}, respectively, by $w_t$, $\psi_t$ and $\varphi_t$, using the integration by parts, the boundary conditions \eqref{BD} and the assumption $(H_0)$, we obtain \eqref{dissip}.
\end{proof}
\subsection{Assumptions} We recall that $g$ is an odd and increasing function  ensuring thus the fulfillment of assumption $(H_0)$ in this manner, and we assume that the function $H$ satisfies the following properties:
 \begin{equation*} 
(H_1) \quad \left\{
\begin{array}{l} 
H(x)=\sqrt{x} g(\sqrt{x});\\
H  \ \text{is a strictly  convex function on} \  [0,r_0^2]  \ \text{for} \ r_0\in (0,1].
\end{array}
\right.
\end{equation*}%
Let  $\widehat{H}$ be a function  given by
\begin{equation*}\label{widH}
\widehat{H}(x):= \left\{
\begin{array}{l}
H(x), \  \mbox{if} \ x\in [0,r_0^2],\\
 \infty, \ \mbox{if} \ x\in \mathbb{R} \backslash [0,r_0^2].
\end{array}%
\right.
\end{equation*}
Next, we define the function $L$ on $[0,\infty)$ as
\begin{equation}\label{L}
L(y):= \left\{
\begin{array}{l}\frac{\widehat{H}^{\star}(y)}{y} \hspace{0,5cm} \mbox{if} \ y>0,\\
 0  \hspace{1,3cm} \mbox{if}\ y=0,
\end{array}%
\right.
\end{equation}
where $\widehat{H}^{\star}$ is the convex conjugate function of $\widehat{H}$ which is given by
\begin{equation*}
\widehat{H}^{\star}(y):=\sup_{x \in \mathbb{R}}\lbrace xy -\widehat{H}(x) \rbrace.
\end{equation*}
Then, we introduce  the function $\Lambda_{H}$ on $(0,r_0^2],$
\begin{equation*}\label{Lamda}
\Lambda_{H}(x):=\frac{H(x)}{xH'(x)},
\end{equation*}
 which is an essential tool to classify the feedback growth around $0$. To the best of our knowledge, this function has  been introduced for the first time in    \cite{R4} giving thus a  simplification of  the decay estimate formula obtained in \cite{AB1}.\\ 
 Finally, let $\eta >0$ and $M>0$ be two positive numbers and $f$ be a strictly increasing function from $[0,\eta)$ onto $[0,\infty)$. For any $r\in (0,\eta)$, we define the function  $K_r$ from $(0,r]$ on $[0,\infty)$ by
\begin{equation*}
K_r(\tau):=\int_{\tau}^r \frac{dy}{y f (y)},
\end{equation*}
and  a strictly increasing  function $\psi_{r}$ given  by,
\begin{equation*}
\psi_r(z)=z+K_r\left(f^{-1}\left( \frac{1}{z}\right)\right)\geq z, \quad \forall \ z\geq \frac{1}{f(r)},
\end{equation*}
where $f$ is a non-negative  and
strictly increasing $\mathcal{C}^1$-function defined from $[0,2\beta r_0^2)$ onto $[0,\infty)$ (see Remark \ref{rem4.1} below for the expression of $f$ and \cite{AB1} for the proof of its properties).\\
Now, for $x\geq \frac{1}{H'(r_0^2)}$, we define
\begin{equation*}
\psi_0(x):=\frac{1}{H'(r_0^2)}+\int_{1/x}^{H'(r_0^2)}\frac{1}{s^2(1-\Lambda_H((H')^{-1}(s)))} ds.
\end{equation*}
\begin{remark}\label{rem4.1}
 The function $L$, defined by \eqref{L}, is a strictly increasing and continuous function from $[0,\infty)$ onto $[0,r_0^2)$; see the proof in   \cite[Proposition 2.2]{AB1}.\\ We recall here the definition of the function $f$ given in \cite{AB2} by
 \begin{equation}\label{ffff1}
 f(s)=L^{-1}\left(\frac{s}{2\beta}\right), \quad \forall \; s\in [0,2\beta r_0^2),
 \end{equation}
 where  $\beta = \beta (E(0))$ is given by \eqref{beta} below.
\end{remark}

The first main result of this section is given by the following theorem. The method used for the proof is adapted from the general approach  in \cite{AB1} and applied here to the plate coupled system.
 \begin{theorem}\label{th2}
Assume that $v_1^2=v_2^2$ and that the assumptions $(H_0)$ and $(H_1)$ hold true. Moreover, suppose that the initial energy $E(0)$ and the parameter $\beta$ satisfy
\begin{equation}\label{bt1}
\frac{E(0)}{2L(H'(r_0^2))}\leq \beta.
\end{equation}
  Then, the total energy of \eqref{MTD1}--\eqref{MTD3}, which is defined by \eqref{en}, decays as
  \begin{equation}\label{energy-est}
 E(t)\leq  \beta L\left( \frac{1}{\psi_0^{-1}\left(\frac{t}{\sigma}\right)}\right), \quad \forall \ t\geq \frac{\sigma}{H'(r_0^2)}.
 \end{equation}
 Furthermore, if $\limsup_{x\rightarrow 0^{+}}\Lambda_H(x)<1,$ then $E$ satisfies  the following simplified decay rate
  \begin{equation*}\label{dcry}
 E(t)\leq \beta \left(H'\right)^{-1}\left( \frac{\sigma}{t}\right), \hspace{0.2cm}\forall  \ t\geq 0.
 \end{equation*}
Here, $\beta$ and $\sigma $ are, respectively, given by
 \begin{equation}\label{beta}
  \beta=\max \left( c_3, \frac{E(0)}{2L(H'(r_0^2))}\right),
   \end{equation}
   and
    \begin{equation}\label{M}
  \sigma = 2 \left(\alpha_3 \left( \frac{1}{c_3} +\frac{1}{c_6}  \right) \left( H'(r_0^2) \alpha_2 (c_4+c_5)+1\right)\right),
  \end{equation}
  where the $c_i$'s are introduced in \eqref{c3c4} and \eqref{c5c6}.
   \end{theorem}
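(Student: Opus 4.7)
My plan is to follow the optimal-weight convexity method of Alabau-Boussouira \cite{AB1,AB2}, adapted from the one-dimensional Timoshenko setting to the coupled 2D plate system \eqref{MTD1}--\eqref{MTD3}.

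First I would derive an observability-type integral estimate via multiplier techniques. Concretely, I multiply \eqref{MTD1} by a quantity built from $w$ and its spatial derivatives, and multiply \eqref{MTD2}, \eqref{MTD3} by $\psi$ and $\varphi$ respectively, and integrate over $\Omega\times (S,T)$ using the Dirichlet boundary conditions \eqref{BD}. This yields control on the "elastic" contributions $K|\psi+w_x|^2+K|\varphi+w_y|^2+D|\psi_x|^2+D|\varphi_y|^2+\cdots$ appearing in \eqref{en} in terms of the kinetic parts plus the nonlinear damping integrals. Together with the dissipation law \eqref{dissip}, I expect an inequality of the form
\[
\int_S^{T} E(t)\,dt \;\leq\; c_3\bigl(E(S)+E(T)\bigr) + c_4 \int_S^T\!\!\int_\Omega \bigl(|\psi_t|^2+|\varphi_t|^2\bigr)\,dx\,dy\,dt + c_5 \int_S^T\!\!\int_\Omega \bigl(|\chi_1(\psi_t)|^2+|\chi_2(\varphi_t)|^2\bigr)\,dx\,dy\,dt.
\]
The equal wave speed condition $v_1^2=v_2^2$ is essential here to cancel the obstruction coming from the cross-derivative couplings $D\left(\tfrac{1+\mu}{2}\right)\varphi_{xy}$ and $D\left(\tfrac{1+\mu}{2}\right)\psi_{xy}$ in \eqref{MTD2}--\eqref{MTD3}, exactly in the spirit of the one-dimensional Timoshenko argument.

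Next I split $\Omega\times(S,T)$ into the subsets where $|\psi_t|\leq 1$, $|\varphi_t|\leq 1$ and their complements. On the "large velocity" subset, $(H_0)$ provides the linear bounds $c_1|s|\leq |\chi_i(s)|\leq c_2|s|$, so the damping integrals are directly controlled by the dissipation $\int \psi_t\chi_1(\psi_t)+\varphi_t\chi_2(\varphi_t)$. On the "small velocity" subset, I apply the Young inequality associated with the convex conjugate pair $(\widehat{H},\widehat{H}^{\star})$ from \eqref{widH}, which, given the structure $H(x)=\sqrt{x}\,g(\sqrt{x})$ in $(H_1)$, reproduces $\chi_i(u)\,u$ up to a weight involving $L$ and $H'$. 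Absorbing the resulting small terms and using the monotonicity of $E$, one reaches the weighted nonlinear inequality
\[
\int_S^{T} L^{-1}\!\left(\frac{E(t)}{\beta}\right) E(t)\,dt \;\leq\; \sigma\, E(S), \qquad 0\leq S\leq T,
\]
with the precise constants $\beta$, $\sigma$ in \eqref{beta}, \eqref{M}; the hypothesis \eqref{bt1} is what guarantees that the argument of $L^{-1}$ stays in its domain.

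Finally, I would invoke the abstract nonlinear Gronwall-type lemma of \cite{AB1}: from such a weighted integral inequality together with the monotonicity of $E$, one obtains the decay rate \eqref{energy-est} through the auxiliary function $\psi_0$. For the simplified conclusion \eqref{dcry}, the assumption $\limsup_{x\to 0^+}\Lambda_H(x)<1$ forces the integrand in the definition of $\psi_0$ to be comparable to $1/s^2$, so after inversion one recovers $E(t)\leq \beta (H')^{-1}(\sigma/t)$. The main obstacle I foresee lies in the first step: producing multiplier identities that give the estimate of $\int_S^T E(t)\,dt$ uniformly in $T$ without leaving behind higher-order boundary or interior terms that cannot be absorbed. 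The absence of any damping term in the transverse displacement equation \eqref{MTD1} forces one to recover $w_t$-information exclusively through the coupling quantities $(\psi+w_x)$ and $(\varphi+w_y)$, and this is precisely where the hypothesis $v_1^2=v_2^2$ becomes indispensable.
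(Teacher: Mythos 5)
Your overall strategy coincides with the paper's: weighted multiplier identities giving a nonlinear integral inequality (Proposition \ref{lemmado}), then the optimal-weight convexity method with the choice $f=L^{-1}(\cdot/(2\beta))$ to reach $\int_S^T E(t)f(E(t))\,dt\leq \sigma E(S)$ (Proposition \ref{lemmado1}), and finally the abstract result of Alabau-Boussouira (the paper invokes \cite[Theorem 2.3]{R4}) to convert this into \eqref{energy-est} and \eqref{dcry}. However, there are two concrete problems in your first step. First, you propose to derive the \emph{unweighted} observability inequality $\int_S^T E(t)\,dt\leq c_3(E(S)+E(T))+\cdots$ and only afterwards produce the weighted inequality through the splitting and Young's inequality. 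This transition is not available: the splitting argument and the conjugate pair $(\widehat H,\widehat H^{\star})$ act on the damping integrals on the right-hand side, but they cannot insert the weight $f(E(t))$ under the integral $\int_S^T E(t)\,dt$ on the left. The weight has to be built into every multiplier from the outset (the paper multiplies by $f(E(t))\tfrac{D}{K}\psi_x$, $f(E(t))(\psi+w_x)$, $f(E(t))q_1\psi_x$, etc.), and the extra terms containing $f'(E(t))E'(t)$ that this generates are then absorbed using the monotonicity of $E$ and $f$, as in \eqref{cc3}--\eqref{cc4}. Second, your explanation of where $v_1^2=v_2^2$ enters is wrong: the cross-derivative couplings $D\left(\tfrac{1+\mu}{2}\right)\varphi_{xy}$ and $D\left(\tfrac{1+\mu}{2}\right)\psi_{xy}$ are handled by integration by parts independently of the wave speeds. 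The equal-speed hypothesis is needed to annihilate the coefficient $\left(\tfrac{\rho_1 D}{K}-\rho_2\right)$ in front of $\int_S^T f(E(t))\int_\Omega \psi_{tt}(\psi+w_x)$ (and its $\varphi$ analogue) produced by combining the multipliers $\tfrac{D}{K}\psi_x$ and $(\psi+w_x)$; otherwise one is left with $|\psi_{tt}|^2$, $|\varphi_{tt}|^2$ terms that the energy does not control.

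Two smaller points: the splitting thresholds cannot simply be $|\psi_t|\leq 1$; they must be taken as $\varepsilon_1=g(r_0/\sqrt{2})$ and $\varepsilon_2=\min(r_0,g(r_1))$ so that, after an application of Jensen's inequality (which you omit but which is the step that actually uses the strict convexity of $H$ on $[0,r_0^2]$ from $(H_1)$), the averaged quantities land in the interval $[0,r_0^2]$ where $\widehat H=H$. Also, the optimal weight is $f(E(t))=L^{-1}\left(\tfrac{E(t)}{2\beta}\right)$, not $L^{-1}\left(\tfrac{E(t)}{\beta}\right)$; the factor $2$ is what makes $\beta\widehat H^{\star}(f(s))=\tfrac{sf(s)}{2}$ and allows the $\widehat H^{\star}$ terms to be absorbed into the left-hand side.
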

   
   Before showing the proof of Theorem \ref{th2}, we state and prove the following proposition.
\begin{proposition}\label{lemmado}
 	 Assume that $v_1^2=v_2^2$ and $\chi_1$, $\chi_2$ satisfy $(H_0)$. Let $ f$ be a strictly increasing  function from $[0,\eta)$ onto $[0,\infty)$. Then, there exist positive constants $\alpha_i$ ($i=1,\ldots,3$) independent of $f$, such that the energy of the solution of \eqref{MTD1}--\eqref{MTD3} verifies the following nonlinear weighted estimate:
\begin{equation}
\begin{array}{lrl}\label{domenrg}
\displaystyle\int_S^T f(E(t))E(t) dt
&\leq &  \displaystyle\alpha_1 E(S) f(E(S))\\
& +&\displaystyle \alpha_2 \int_S^T f(E(t))\left(\int_{\Omega}|\chi_1(\psi_t)|^2 + |\chi_2(\varphi_t)|^2 dx dy\right) dt \\  && + \displaystyle\alpha_3 \int_S^T f(E(t)) \left(\int_{\Omega} |\psi_t|^2 +  |\varphi_t|^2 dx dy\right)  dt.
\end{array}	
 \end{equation}
\end{proposition}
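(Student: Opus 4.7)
The plan is to adapt the multiplier method with the weight $f(E(t))$ incorporated directly: multiply \eqref{MTD1}, \eqref{MTD2}, and \eqref{MTD3} by $f(E(t))\,w$, $f(E(t))\,\psi$, and $f(E(t))\,\varphi$ respectively, and integrate over $\Omega\times[S,T]$. After integrations by parts in space (with all boundary contributions vanishing thanks to \eqref{BD}), the shear pairings $K(\psi+w_x)w_x + K(\psi+w_x)\psi$ and $K(\varphi+w_y)w_y + K(\varphi+w_y)\varphi$ reassemble into $K|\psi+w_x|^2$ and $K|\varphi+w_y|^2$, while the bending terms $D\psi_x^2$, $D\varphi_y^2$, the mixed $D\frac{1+\mu}{2}$-couplings coming from the $\varphi_{xy}$ and $\psi_{xy}$ cross terms, and $D\frac{1-\mu}{2}(\psi_y^2+\varphi_x^2)$ recombine into $D\frac{1-\mu}{2}|\psi_y+\varphi_x|^2+2D\mu\,\psi_x\varphi_y$ once the vanishing Jacobian identity $\int_\Omega(\psi_x\varphi_y-\psi_y\varphi_x)\,dx\,dy=0$ (a consequence of Green's theorem and $\psi|_\Gamma=\varphi|_\Gamma=0$) is invoked. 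The total is exactly twice the non-kinetic part of $E(t)$.

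For the inertial terms I use $\int_\Omega \rho_i\,u_{tt}\,u\,dx\,dy = \frac{d}{dt}\int_\Omega\rho_i u_t u\,dx\,dy - \int_\Omega\rho_i|u_t|^2\,dx\,dy$ for $u\in\{w,\psi,\varphi\}$ and integrate against $f(E(t))$ over $[S,T]$. The boundary-in-time contributions $\bigl[f(E)\int\rho_i u_t u\,dx\,dy\bigr]_S^T$ are dominated by $CE(S)f(E(S))$ via Cauchy--Schwarz, Poincar\'e's inequality (so that $\int u^2\leq CE$), and the monotonicity of $E$ and $f\circ E$; the residual $\int_S^T f'(E)E'\int\rho_i u_t u\,dt$, sign-indefinite a priori, is controlled by noting $\bigl|\int\rho_i u_t u\bigr|\leq CE(t)$ and invoking a second integration by parts in time,
\begin{equation*}
\int_S^T\bigl(-f'(E)E'\bigr)E\,dt \;=\; -\bigl[E\,f(E)\bigr]_S^T + \int_S^T f(E)E'\,dt \;\leq\; E(S)f(E(S)),
\end{equation*}
the final inequality using $E'\leq 0$. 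Collecting everything produces, schematically,
\begin{equation*}
2\int_S^T\! f(E)E\,dt \;=\; 2\int_S^T\! f(E)\!\int_\Omega(\rho_1 w_t^2+\rho_2\psi_t^2+\rho_2\varphi_t^2)\,dt + \mathcal{O}\bigl(E(S)f(E(S))\bigr) - \int_S^T\! f(E)\!\int_\Omega\bigl(\chi_1(\psi_t)\psi+\chi_2(\varphi_t)\varphi\bigr)\,dt,
\end{equation*}
and Young's inequality $|\chi_i(u)\,u|\leq\varepsilon u^2 + C_\varepsilon|\chi_i(u)|^2$, combined with $\int_\Omega \psi^2,\int_\Omega\varphi^2\leq CE$, lets one absorb the $\varepsilon$-parts into the left-hand side while producing the $\alpha_2$ contribution on the right.

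The main obstacle is that the transverse-velocity integral $\int_S^T\!f(E)\!\int_\Omega\rho_1 w_t^2$ arising above has no counterpart in the target inequality and must be exchanged for the expressions $\int|\psi_t|^2$, $\int|\varphi_t|^2$, $\int|\chi_i|^2$ that do appear. This is precisely where the equal wave-speed hypothesis $v_1^2=v_2^2$, i.e.\ $K/\rho_1=D/\rho_2$, enters critically: in the spirit of \cite{AB2} for the 1D Timoshenko beam, I would supplement the scheme with an auxiliary multiplier coupling $w_t$ to the rotational unknowns through equation \eqref{MTD1}, the identity $K/\rho_1=D/\rho_2$ being exactly what makes the resulting cross terms close. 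The 2D setting forces one to carry both rotations $\psi,\varphi$ and the Poisson coupling $2D\mu\psi_x\varphi_y$ through the book-keeping; this is the principal technical burden. After this exchange, and a small choice of $\varepsilon$, the inequality follows with constants $\alpha_1,\alpha_2,\alpha_3$ depending only on $\rho_i,K,D,\mu$ and the Poincar\'e constant of $\Omega$, hence independent of $f$.
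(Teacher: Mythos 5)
Your first half is essentially the paper's own step: multiplying \eqref{MTD1}--\eqref{MTD3} by $f(E(t))w$, $f(E(t))\psi$, $f(E(t))\varphi$, handling the time-boundary terms by $CE(S)f(E(S))$ and the $f'(E)E'$ residuals by the sign of $E'$, and using Young plus Poincar\'e on $\int\chi_i(\cdot)\psi$, $\int\chi_i(\cdot)\varphi$ is exactly how the paper produces its identity \eqref{19} and the $\alpha_2$, $\alpha_3$ contributions. The genuine gap is that the step you correctly flag as ``the main obstacle'' is left entirely unexecuted: you say you ``would supplement the scheme with an auxiliary multiplier coupling $w_t$ to the rotational unknowns,'' but you never specify the multiplier, and that computation is the actual core of the proof. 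In the paper it occupies Lemmas \ref{l178}--\ref{0001}: one tests \eqref{MTD1} with $f(E(t))\tfrac{D}{K}\psi_x$ and \eqref{MTD2} with $f(E(t))(\psi+w_x)$ (and the $y$-analogues with $\tfrac{D}{K}\varphi_y$ and $(\varphi+w_y)$), which produces $K\int f(E)\int|\psi+w_x|^2$ on the left and, after integrating the inertial cross term by parts in $t$, a term $\bigl(\rho_2-\tfrac{\rho_1 D}{K}\bigr)\int f(E)\int\psi_{tt}(\psi+w_x)$ on the right. The equal-wave-speed hypothesis enters precisely to annihilate this coefficient (otherwise one is left with $\int|\psi_{tt}|^2$, which is not controlled by the finite energy); it does not, as your sketch suggests, directly convert $\int\rho_1 w_t^2$ into rotational quantities. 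Without this estimate neither $\int f(E)\int\rho_1 w_t^2$ nor the shear potential terms $K\int|\psi+w_x|^2$, $K\int|\varphi+w_y|^2$ appearing in $E$ can be traded for the right-hand side of \eqref{domenrg}, so the argument does not close.

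A second concrete error: your claim that ``all boundary contributions vanish thanks to \eqref{BD}'' is true for the multipliers $w,\psi,\varphi$ but false for the auxiliary multipliers you would need. Testing with $\psi_x$ and $w_x$ and integrating by parts in $x$ leaves terms such as $D\int_S^T f(E)\int_0^{L_2}\bigl[\psi_x w_x\bigr]_{x=0}^{x=L_1}\,dy\,dt$, since the Dirichlet condition kills tangential but not normal derivatives on $\Gamma_2$. The paper must introduce the additional multipliers $q_1\psi_x$, $q_1w_x$ (and $q_2\varphi_y$, $q_2w_y$) with $q_1(0)=-q_1(L_1)=2$ in Lemmas \ref{l74}, \ref{l75} solely to absorb these traces. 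Your proposal is silent on this, so even granting the auxiliary multiplier idea, the estimate as sketched would not survive the integrations by parts.
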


The proof of Proposition \ref{lemmado} is carried out in Subsection \ref{proof prop4.3}. To this end, we first show some auxiliary results that we gather in the following subsection.
\subsection{Technical Lemmas}\mbox{}\\
The main purpose of this subsection is to make the presentation clear and organized for the reader so that the tedious computations for the energy estimates can be divided in many steps. Hence, in the following we state and prove some useful technical lemmas.
\begin{lemma}\label{l178}
Let $U=(w,w_t,\psi,\psi_t,\varphi,\varphi_t) $ be the solution of \eqref{MTD1}--\eqref{MTD3} with  the boundary conditions \eqref{BD} and we assume that $K>1$. Then for any   non-negative, $\mathcal{C}^1$ and strictly increasing  function $f$.
\begin{equation}
\begin{array}{lrl}\label{llkk}
&&\displaystyle (K-1)\int_S^T f(E(t)) \int_{\Omega} |\psi+w_x|^2 dx dy dt \\&&\displaystyle\leq \frac{1}{2}\int_{S}^T f(E(t))\int_{\Omega}|\chi_1(\psi_t)|^2 dx dy dt +\frac{1}{2}|\rho_2-\frac{\rho_1D}{K}|^2 \int_S f(E(t))|\psi_{tt}|^2 dx dy dt \\&& +\displaystyle \frac{\rho_1D}{K}\int_S^T \int_{\Omega}f(E(t))|\psi_t|^2 dx dy dt + cE(S)f(E(S)) \\ &&+\displaystyle D\left( \frac{1+\mu}{2} \right)\int_{S}^T f(E(t)) \int_{\Omega}  \varphi_{xy} (\psi+w_x)\ dx  dy   dt \\ &&+ \displaystyle D\left( \frac{1-\mu}{2} \right)\int_{S}^T f(E(t)) \int_{\Omega} \psi_{yy} (\psi+w_x)\  dx dy  dt \\ && \displaystyle  + D  \int_{S}^T f(E(t)) \int_{\Omega} (\varphi+w_y)_y \ \psi_x\ dx dy  dt  \\ && +\displaystyle \frac{ D}{2\varepsilon} \int_S^T f(E(t))\int_0^{L_2}\left[ \psi_x^2(L_1,y,t)+ \psi_x^2(0,y,t)\right]  dy  dt\\  
 &&  + \displaystyle \frac{ D\varepsilon}{2} \int_S^T f(E(t))\int_0^{L_2}\left[
  w_x^2(L_1,y,t)+ w_x^2(0,y,t) \right] dy  dt.
\end{array}	
 \end{equation}
\end{lemma}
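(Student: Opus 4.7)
My plan is to multiply equation \eqref{MTD2} by $f(E(t))(\psi+w_x)$ and integrate over $(S,T)\times\Omega$. Since the linear term $K(\psi+w_x)$ in \eqref{MTD2} produces exactly what appears on the left of \eqref{llkk}, this yields the basic identity
\begin{equation*}
K\int_S^T f(E)\int_\Omega (\psi+w_x)^2\,dx\,dy\,dt=\int_S^T f(E)\int_\Omega\bigl[-\rho_2\psi_{tt}+D\psi_{xx}+D\tfrac{1-\mu}{2}\psi_{yy}+D\tfrac{1+\mu}{2}\varphi_{xy}-\chi_1(\psi_t)\bigr](\psi+w_x).
\end{equation*}
The $\psi_{yy}(\psi+w_x)$ and $\varphi_{xy}(\psi+w_x)$ pieces can be left as they stand, since they appear verbatim on the right-hand side of \eqref{llkk}. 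All the work concerns the three remaining terms.

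First, Young's inequality on the damping term gives $|\chi_1(\psi_t)(\psi+w_x)|\leq\tfrac{1}{2}|\chi_1(\psi_t)|^2+\tfrac{1}{2}(\psi+w_x)^2$, producing the $|\chi_1|^2$ term on the right of \eqref{llkk} together with an absorption contribution. Next I split $\rho_2\psi_{tt}=\bigl(\rho_2-\tfrac{\rho_1 D}{K}\bigr)\psi_{tt}+\tfrac{\rho_1 D}{K}\psi_{tt}$, and apply Young's inequality to the ``speed-mismatch'' piece:
\begin{equation*}
\Bigl|\Bigl(\rho_2-\tfrac{\rho_1 D}{K}\Bigr)\psi_{tt}(\psi+w_x)\Bigr|\leq\tfrac{1}{2}\Bigl|\rho_2-\tfrac{\rho_1 D}{K}\Bigr|^2|\psi_{tt}|^2+\tfrac{1}{2}(\psi+w_x)^2.
\end{equation*}
This yields the $|\psi_{tt}|^2$ term and another absorption. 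Moving both $\tfrac{1}{2}(\psi+w_x)^2$ copies to the left reduces the leading coefficient from $K$ to $K-1$, which is where the hypothesis $K>1$ enters.

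The two genuinely coupled pieces are then $-\tfrac{\rho_1 D}{K}\psi_{tt}(\psi+w_x)$ and $D\psi_{xx}(\psi+w_x)$. For the latter I integrate by parts in $x$; since $\psi=0$ on $\Gamma$, only the boundary term $D\int_0^{L_2}[\psi_x w_x]_0^{L_1}dy$ survives, plus an interior term $-D\int_\Omega\psi_x(\psi_x+w_{xx})$. At this point I invoke \eqref{MTD1} in the form $\psi_x+w_{xx}=\tfrac{\rho_1}{K}w_{tt}-(\varphi+w_y)_y$; the pure $\psi_x^2$ pieces cancel, leaving $-\tfrac{D\rho_1}{K}\int\psi_x w_{tt}$ together with $D\int\psi_x(\varphi+w_y)_y$, the latter being term (g) of \eqref{llkk}. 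The $x$-boundary contribution is then handled by Young's inequality $|\psi_x w_x|\leq\tfrac{1}{2\varepsilon}\psi_x^2+\tfrac{\varepsilon}{2}w_x^2$, giving the $\Gamma_2$-boundary terms in \eqref{llkk}. The two remaining $w_{tt}$-type integrals $-\tfrac{\rho_1 D}{K}\psi_{tt}(\psi+w_x)$ and $-\tfrac{D\rho_1}{K}\psi_x w_{tt}$ are both integrated by parts in $t$; for the second I additionally integrate by parts in $x$, using $\psi_t=0$ on $\Gamma$ to convert $\int\psi_{xt}w_t$ into $-\int\psi_t w_{xt}$. A direct calculation shows that the two resulting cross terms $\pm\tfrac{\rho_1 D}{K}\int f(E)\int\psi_t w_{xt}$ cancel exactly, leaving only the term $\tfrac{\rho_1 D}{K}\int f(E)\int\psi_t^2$ together with evaluation terms at $t=S,T$ and $f'(E)E'$-terms.

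The remainder is bookkeeping. The evaluation terms at $t=S,T$ are bounded by $cE(S)f(E(S))$ using Cauchy--Schwarz, the estimate $|\int\psi_t(\psi+w_x)|+|\int\psi_x w_t|\leq CE(t)$, and monotonicity of $E$ and $f$. The $f'(E)E'$-terms are handled via the identity $\frac{d}{dt}G(E(t))=E(t)f'(E(t))E'(t)$ with $G(x)=\int_0^x sf'(s)\,ds\leq xf(x)$; since $-E'\geq 0$, this yields $\int_S^T(-E')f'(E)E\,dt\leq G(E(S))\leq E(S)f(E(S))$. Assembling all the pieces produces \eqref{llkk}. I expect the main obstacle to be the synchronous management of the two integrations by parts in time — on $\psi_{tt}(\psi+w_x)$ and on $\psi_x w_{tt}$ — and the verification that the $\psi_t w_{xt}$ cross terms cancel exactly, since any sign error would destroy the clean coefficient structure on the right-hand side of \eqref{llkk}.
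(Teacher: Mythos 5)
Your proposal is correct and follows essentially the same route as the paper: the paper reaches the same identity by explicitly using $f(E(t))\tfrac{D}{K}\psi_x$ as a multiplier on \eqref{MTD1} and summing with \eqref{MTD2} multiplied by $f(E(t))(\psi+w_x)$, which is algebraically identical to your integration by parts of the $D\psi_{xx}(\psi+w_x)$ term followed by substitution of $(\psi+w_x)_x$ from \eqref{MTD1}. The subsequent steps — splitting $\rho_2\psi_{tt}$ into the speed-mismatch piece plus $\tfrac{\rho_1 D}{K}\psi_{tt}$, the Young absorptions yielding the coefficient $K-1$, the cancellation of the $\psi_t w_{xt}$ cross terms (the paper packages this as the total time derivative $(w_t\psi_x+w_x\psi_t)_t$), the $\varepsilon$-Young treatment of the $x$-boundary term, and the $cE(S)f(E(S))$ bounds on the evaluation and $f'(E)E'$ terms — all coincide with the paper's argument.
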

\begin{proof}
First, we multiply the  equation  \eqref{MTD1} by $f(E(t)) \frac{D}{K}\psi_x$ and the  equation \eqref{MTD2} by  $f(E(t)) (\psi+w_x)$, then we integrate the resulting equations over $[S,T]\times \Omega$. Hence, we obtain, respectively,
\begin{equation}
\begin{array}{lrl}\label{aa1}
  &&\displaystyle\rho_1  \frac{D}{K} \int_S^T   f(E(t)) \int_{\Omega} w_{tt}  \psi_x  dx  dy \, dt - D \int_{S}^T f(E(t)) \int_{\Omega} (\psi+w_x)_x \ \psi_x  dx dy  dt \\ 
&&-\displaystyle D \int_{S}^T f(E(t)) \int_{\Omega} (\varphi+w_y)_y \ \psi_x   dx  dy  dt =0,
\end{array}	
 \end{equation}
and
\begin{equation}
\begin{array}{lrl}\label{aa22}	
&&\displaystyle \rho_2  \int_{S}^T f(E(t) \int_{\Omega} \psi_{tt} (\psi+w_x)  dx  dy  dt- D  \int_{S}^T f(E(t)) \int_{\Omega}\psi_{xx}(\psi+w_x) dx  dy  dt \\
&&-\displaystyle D \left(\frac{1-\mu}{2}\right)\int_{S}^T f(E(t)) \int_{\Omega} \psi_{yy} (\psi+w_x)  dx dy  dt  
\\  & &-\displaystyle D\left( \frac{1+\mu}{2} \right)\int_{S}^T f(E(t)) \int_{\Omega} \varphi_{xy} (\psi+w_x)  dx dy  dt  + K \int_{S}^T f(E(t) \int_{\Omega} |\psi+w_x|^2  dx  dy  dt \\
&&+  \displaystyle\int_{S}^T f(E(t) \int_{\Omega}  \chi_1(\psi_t)(\psi+w_x)   dx  dy   dt =0. 
\end{array}	
\end{equation}	
Second, summing the  equations \eqref{aa1} and \eqref{aa22} and using an integration by parts with respect to the variable $x$ in the term 
$ D  \int_{S}^T f(E(t) \int_{\Omega}\psi_{xx}(\psi+w_x) dx  dy  dt.
$
We arrive at
\begin{equation}
\begin{array}{lrl}\label{y1}
&&\displaystyle K\int_{S}^T f(E(t) \int_{\Omega} |\psi+w_x|^2  dx  dy  dt \\ &&=\displaystyle D \int_{S}^T f(E(t)) \int_{\Omega} (\varphi+w_y)_y \ \psi_x dx dy  dt  - \rho_2 \int_{S}^T f(E(t)  \int_{\Omega}  \psi_{tt} (\psi+w_x)  dx  dy  dt \\ && \displaystyle- \rho_1 \frac{D}{K} \int_S^T   f(E(t)) \int_{\Omega} w_{tt}  \psi_x   dx dy  dt  + D\left( \frac{1\!-\!\mu}{2} \right)\int_{S}^T \!f(E(t))  \int_{\Omega}  \psi_{yy} (\psi+w_x) dx dy dt\\
&&+ \displaystyle D\left( \frac{1\!+\!\mu}{2} \right)\int_{S}^T \!f(E(t)) \int_{\Omega}  \varphi_{xy} (\psi+w_x)  dx  dy  dt  - \int_{S}^T f(E(t) \int_{\Omega}  \chi_1(\psi_t)(\psi+w_x)  dx  dy  dt\\ && \displaystyle+D \int_S^T f(E(t))\int_0^{L_2} \left[\psi_x(L_1,y,t) w_x(L_1,y,t)- \psi_x(0,y,t) w_x(0,y,t) \right] dy  dt
.\end{array}	
\end{equation} 
Now, integrating by parts with respect to $x$ and  $t$  in some terms of the above  equality \eqref{y1}, we obtain
\begin{equation*}
\begin{array}{lcl}
&& \displaystyle K \int_{S}^T f(E(t)) \int_{\Omega} |\psi+w_x|^2 dx dy  dt   \\ &&= \displaystyle\left( \frac{\rho_1D}{K}-\rho_2\right) \int_S^T   f(E(t)) \int_{\Omega} \psi_{tt}  (w_x+\psi)  dx dy dt + \frac{\rho_1D}{K}\int_S^T f(E(t)) \int_{\Omega}|\psi_{t}|^2  dx dy  dt   \\ 
&&\displaystyle -\left[ \frac{\rho_1D}{K} f(E(t))\int_{\Omega} \psi_t \psi\right]_S^T
 +\frac{\rho_1D}{K} \int_S^T f'(E(t))E'(t)  \int_{\Omega} \psi_t \psi \  dx dy  dt  \\ &&\displaystyle -\frac{\rho_1D}{K} \int_S^T  f(E(t))
 \int_{\Omega} (w_t\psi_x+w_x\psi_t)_t \ dx dy dt  \\ && \displaystyle+\frac{\rho_1D}{K} \int_S^T  f(E(t))
  \int_0^{L_2}\left[ w_t(L_1,y,t)\varphi_t(L_1,y,t)-w_t(0,y,t)\varphi_t(0,y,t)\right]dy dt\\ &&\displaystyle +D \int_S^T f(E(t))\int_0^{L_2}\left[ \psi_x(L_1,y,t) w_x(L_1,y,t)- \psi_x(0,y,t) w_x(0,y,t) \right]dy  dt \\ &&
 +\displaystyle D\left( \frac{1+\mu}{2} \right)\int_{S}^T f(E(t)) \int_{\Omega}  \varphi_{xy} (\psi+w_x)\ dx  dy   dt \\ && +\displaystyle D\left( \frac{1-\mu}{2} \right)\int_{S}^T f(E(t)) \int_{\Omega} \psi_{yy} (\psi+w_x)\  dx dy  dt\\ && \displaystyle + D  \int_{S}^T f(E(t)) \int_{\Omega} (\varphi+w_y)_y \ \psi_x\ dx dy  dt - \int_{S}^T f(E(t)) \int_{\Omega}  \chi_1(\psi_t)(\psi+w_x)   dx  dy dt. 
\end{array}	
\end{equation*}
Since $\varphi=w=0$ at the boundary, then
\begin{equation*}
 \frac{\rho_1D}{K} \int_S^T  f(E(t))
  \int_0^{L_2} \left[ w_t(L_1,y,t)\varphi_t(L_1,y,t)-w_t(0,y,t)\varphi_t(0,y,t)\right]dy dt=0.
\end{equation*}
Now, using  the Young's inequality and for any $\varepsilon>0$, we  estimate the following term which actually appears because of the  non-vanishing boundary conditions
\begin{eqnarray*}
&&D \int_S^T f(E(t))\int_0^{L_2} \left[\psi_x(L_1,y,t) w_x(L_1,y,t)- \psi_x(0,y,t) w_x(0,y,t) \right]dy  dt\nonumber\\ && \leq  \frac{D}{2\varepsilon} \int_S^T f(E(t))\int_0^{L_2}\left[ \psi_x^2(L_1,y,t) + \psi_x^2(0,y,t)  \right]dy  dt \nonumber\\ && +\frac{D\varepsilon}{2}\int_S^T f(E(t))\int_0^{L_2}\left[ w_x^2(L_1,y,t) + w_x^2(0,y,t) \right] dy  dt \nonumber.
\end{eqnarray*}
Therefore, we have the following inequality: 
\begin{equation}
\begin{array}{lcl}\label{158}
&&\displaystyle K \int_{S}^T f(E(t)) \int_{\Omega} |\psi+w_x|^2 dx dy  dt\\ &&\displaystyle\leq  \left( \frac{\rho_1D}{K}-\rho_2\right) \int_S^T   f(E(t)) \int_{\Omega} \psi_{tt}  (w_x+\psi)  dx dy dt\\ &&+\displaystyle
\frac{\rho_1D}{K}\int_S^T f(E(t)) \int_{\Omega}|\psi_{t}|^2  dx dy  dt \\&& \displaystyle -\left[ \frac{\rho_1D}{K} f(E(t))\int_{\Omega} \psi_t \psi\right]_S^T
 +\frac{\rho_1D}{K} \int_S^T f'(E(t))E'(t)  \int_{\Omega} \psi_t \psi \  dx dy  dt \\ &&\displaystyle -\frac{\rho_1D}{K} \int_S^T  f(E(t))
 \int_{\Omega} (w_t\psi_x+w_x\psi_t)_t \ dx dy dt  
 \\ && \displaystyle+ D\left( \frac{1+\mu}{2} \right)\int_{S}^T f(E(t) \int_{\Omega}  \varphi_{xy} (\psi+w_x)\ dx  dy   dt \\ \displaystyle &&+\displaystyle D\left( \frac{1-\mu}{2} \right)\int_{S}^T f(E(t)) \int_{\Omega} \psi_{yy} (\psi+w_x)\  dx dy  dt  \\ &&\displaystyle + D  \int_{S}^T f(E(t)) \int_{\Omega} (\varphi+w_y)_y \ \psi_x\ dx dy  dt  - \int_{S}^T f(E(t)) \int_{\Omega}  \chi_1(\psi_t)(\psi+w_x)   dx  dy dt \\ && \displaystyle+ \frac{ D}{2\varepsilon} \int_S^T f(E(t))\int_0^{L_2} \left[\psi_x^2(L_1,y,t)+ \psi_x^2(0,y,t)\right]  dy  dt \\  
 &&\displaystyle +  \frac{ D\varepsilon}{2} \int_S^T f(E(t))\int_0^{L_2}\left[
  w_x^2(L_1,y,t)+ w_x^2(0,y,t)\right]  dy  dt.
\end{array}
\end{equation}

In the sequel, we will estimate some terms appearing in \eqref{158} using Young's inequality.  First, we see that
\begin{equation}
\begin{array}{lcl}\label{cc1}
\displaystyle\left\vert \int_S^T f(E(t))\int_{\Omega} \chi_1(\psi_t)(\psi+w_x)\right\vert\leq  \frac{1}{2}\int_S^T f(E(t))\int_{\Omega} (|\chi_1(\psi_t)|^2  + |\psi+w_x|^2) dx dy  dt,
\end{array}
\end{equation}
and
\begin{equation}
\begin{array}{lcl}\label{cc2}
&\displaystyle\left\vert \left(\rho_2-\frac{\rho_1D}{K}\right) \int_S^T   f(E(t)) \int_{\Omega} \psi_{tt}  (w_x+\psi) \ dx dy dt \right\vert\\ &\displaystyle\leq  \frac{1}{2} |\rho_2-  \frac{\rho_1D}{K}|^2 \int_S^T   f(E(t)) \int_{\Omega} |\psi_{tt}|^2 dx  dy  dt \\ & \displaystyle + \frac{1}{2}\int_S^T   f(E(t)) \int_{\Omega}  |w_x+\psi|^2  dx dy  dt,
\end{array}
\end{equation}
Then, we perform an integration by parts for the term (appearing in \eqref{158}),
 $$\displaystyle -\frac{\rho_1D}{K} \int_S^T  f(E(t))
 \int_{\Omega} (w_t\psi_x+w_x\psi_t)_t \ dx dy dt.$$   Since $E$ is a non-decreasing function, whereas $f$ is an increasing function,  we have 
\begin{equation}
\begin{array}{lcl}\label{cc3}
\displaystyle\left\vert\frac{\rho_1 D}{K}\left[ f(E(t))\int_{\Omega} \Big(\psi_t(\psi+w_x)+w_t\psi_x\Big) dxdy \right]_S^T\right\vert \leq &&\displaystyle\left\vert \left[ \frac{\rho_1 D}{K} f(E(t))E(t)\right]_S^T \right\vert \\  && \displaystyle\leq  c E(S)f(E(S)), \quad  \forall\  0 \leq S\leq T,
\end{array}
\end{equation}
where $c$ is a generic constant  which may depend on $L_1, L_2, \rho_1,  \rho_2, K$ and $D$.\\
Similarly for any  $ 0\leq S \leq T$, we estimate the following term as
\begin{equation}
\begin{array}{lcl}\label{cc4}
&&\displaystyle\left\vert\frac{\rho_1D}{K} \int_S^T f'(E(t))E'(t)\int_{\Omega}\Big(\psi_t(\psi+w_x)-\psi_t w_x  \Big)dx dy  \, dt\right\vert\\ &\leq& \displaystyle c E(S) \int_S^T f'(E(t))(-E'(t))E(t)dt\leq c E(S)f(E(S)).
\end{array}
\end{equation}
We inject  the estimates \eqref{cc1}, \eqref{cc2}, \eqref{cc3} and \eqref{cc4} in \eqref{158}, and recall that $K>1$,  we end up with the estimate \eqref{llkk}.
\end{proof}

\begin{remark}
From the physical point of view, the shear modulus $K$ serves as a measure of a material's resistance to shear deformation. For common materials such as Aluminum, Steel, Copper, and others, $K$ is greater than $1$. Therefore, the assumption  $K>1$ is naturally applicable to the specific type of solid under consideration in this paper. More specifically, structural steel is chosen for its high shear modulus, allowing it to resist deformation caused by shear forces. In the context of building construction, external factors like wind loading can induce twisting stresses on the structure, resulting in shear stresses within the steel components.
\end{remark}

Now, we state and prove the following lemmas which show the control of the boundary terms appearing in \eqref{llkk}.
 
\begin{lemma} \label{l74}
Let $U=(w,w_t,\psi,\psi_t,\varphi,\varphi_t) $ be the solution of \eqref{MTD1}--\eqref{MTD3} with  the boundary conditions \eqref{BD}. Then, for any   non-negative and strictly increasing  $\mathcal{C}^1-$function $f$, we have, for all  $S \leq t \leq T,$
\begin{equation}
\begin{array}{lcl} \label{jht}
&&\displaystyle D\left(\int_S^Tf(E(t)) \int_0^{L_2} \left[ \psi_x^2(L_1,y,t) + \psi_x^2(0,y,t) \right]dy  \right)\\
&& \leq \displaystyle \kappa_1 E(S) f(E(S)) + \kappa_2   \left(1+\frac{1}{\eta}\right)\int_S^{T}f(E(t)) \int_{\Omega} \psi_x^2 dx dy dt \\ && + \displaystyle\eta \int_S^T f(E(t))\int_{\Omega} |\psi+w_x|^2 dx dy dt +\kappa_1\int_S^T f(E(t))\int_{\Omega} \left(|\chi_1(\psi_t)|^2+|\psi_t|^2\right) dx dy dt \\ &&+\displaystyle \kappa_2 \left[D\left(\frac{1-\mu}{2}\right)\int_S^{T}f(E(t))\int_{\Omega} \psi_{yy}\psi_x dx dy dt \right]\\ && +\displaystyle \kappa_3  \left[ D\left(\frac{1+\mu}{2}\right)\int_S^Tf(E(t))\int_{\Omega} \varphi_{xy}\psi_x dx dy dt\right],
\end{array}
\end{equation}
where $\kappa_i$, $i=1,\ldots,3$, does not depend on $\eta$.
\end{lemma}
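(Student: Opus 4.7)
I would use a multiplier argument tailored to reveal the one-dimensional boundary traces $\psi_x^2(0,y,t)$ and $\psi_x^2(L_1,y,t)$. The plan is to multiply the $\psi$-equation \eqref{MTD2} by $f(E(t))\,q_1(x)\,\psi_x$ and integrate over $[S,T]\times\Omega$. Integration by parts in $x$ on the principal second-order piece gives
$$
-D\int_S^T f(E)\int_\Omega q_1\,\psi_{xx}\,\psi_x\,dx\,dy\,dt = -\frac{D}{2}\int_S^T f(E)\int_0^{L_2}\bigl[q_1\,\psi_x^2\bigr]_{x=0}^{x=L_1}dy\,dt + \frac{D}{2}\int_S^T f(E)\int_\Omega q_1'(x)\,\psi_x^2,
$$
and the prescription $q_1(0)=2,\ q_1(L_1)=-2$ turns the boundary bracket into exactly $D\int_S^T f(E)\int_0^{L_2}\bigl(\psi_x^2(L_1,y,t)+\psi_x^2(0,y,t)\bigr)dy\,dt$, which is the left-hand side of \eqref{jht}. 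The residual interior piece $\tfrac{D}{2}\int f(E)\int q_1'\psi_x^2$ is absorbed into the $\kappa_2(1+\eta^{-1})\int f(E)\int\psi_x^2$ contribution on the right.

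The inertia term $\rho_2\int f(E)\int q_1\,\psi_{tt}\,\psi_x$ is the most delicate and is handled by integration by parts in $t$. The temporal boundary piece $[\rho_2 f(E)\int q_1\psi_t\psi_x]_S^T$ is bounded by $CE(S)f(E(S))$ since $t\mapsto E(t)f(E(t))$ is nonincreasing and $|\int q_1\psi_t\psi_x|\le CE(t)$ by Young's inequality. The $f'(E)E'$ contribution is controlled by $CE(S)\int_S^T(-f'(E)E')dt = CE(S)\bigl(f(E(S))-f(E(T))\bigr) \le CE(S)f(E(S))$. The remaining piece $\rho_2\int f(E)\int q_1\,\psi_t\,\psi_{xt}$ is transformed by a further integration by parts in $x$; the boundary trace vanishes because $\psi = 0$ on $\Gamma$ forces $\psi_t|_{x=0,L_1}=0$, leaving the interior contribution $-\tfrac{\rho_2}{2}\int f(E)\int q_1'\psi_t^2$.

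The remaining coupling terms are treated directly. The product $Kq_1(\psi+w_x)\psi_x$ is split by Young's inequality with parameter $\eta$, producing exactly the $\eta\int f(E)\int|\psi+w_x|^2$ and $\eta^{-1}\int f(E)\int\psi_x^2$ contributions of \eqref{jht}. The damping coupling $q_1\chi_1(\psi_t)\psi_x$ splits similarly, producing $\int f(E)\int|\chi_1(\psi_t)|^2+\int f(E)\int\psi_x^2$. The anisotropic products $D(\tfrac{1-\mu}{2})q_1\psi_{yy}\psi_x$ and $D(\tfrac{1+\mu}{2})q_1\varphi_{xy}\psi_x$ are carried over unchanged (without sign control) and account for the last two summands on the right of \eqref{jht}; their proper absorption is postponed to the coupling with Lemma \ref{l178} at the level of Proposition \ref{lemmado}.

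The main obstacle is controlling the leftover velocity integral $\int f(E)\int q_1'\psi_t^2$ from the inertia term, since no $\int f(E)\int\psi_t^2$ summand appears explicitly on the right of \eqref{jht}. The plan is to split $\Omega$ according to $\{|\psi_t|\le 1\}\cup\{|\psi_t|\ge 1\}$ afforded by $(H_0)$: on the large-velocity set $\psi_t^2\le c_1^{-2}|\chi_1(\psi_t)|^2$, which is absorbed into the $\kappa_1\int f(E)\int|\chi_1(\psi_t)|^2$ term; on the small-velocity set, the dissipation identity \eqref{dissip} combined with the pointwise inequality $c_1 H(\psi_t^2)\le \psi_t\chi_1(\psi_t)$ (arising from $H(s^2)=|s|g(|s|)$ in $(H_1)$) and Jensen's inequality for the convex function $H$ yield $\int_S^T f(E)\int_{|\psi_t|\le 1}\psi_t^2\,dt \le CE(S)f(E(S))$, using that $\int_S^T f(E)(-E')\,dt\le E(S)f(E(S))$ since both $E$ and $f(E)$ are nonincreasing. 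After collecting all contributions and relabelling the resulting constants as $\kappa_1,\kappa_2,\kappa_3$, we obtain precisely \eqref{jht}.
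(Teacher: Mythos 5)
Your multiplier computation coincides with the paper's proof: multiply \eqref{MTD2} by $f(E(t))\,q_1(x)\,\psi_x$, integrate by parts in $x$ so that $q_1(0)=-q_1(L_1)=2$ turns the boundary bracket into the trace integral on the left of \eqref{jht}, integrate by parts in $t$ for the inertia term, bound the temporal bracket and the $f'(E)E'$ contribution by $C\,E(S)f(E(S))$, and split the couplings $Kq_1(\psi+w_x)\psi_x$ and $q_1\chi_1(\psi_t)\psi_x$ by Young's inequality while carrying the anisotropic products $\psi_{yy}\psi_x$ and $\varphi_{xy}\psi_x$ along unabsorbed. Up to that point you have reproduced the paper's argument, including the observation that the inertia term leaves behind an interior velocity integral $\tfrac{\rho_2}{2}\int_S^T f(E)\int_\Omega q_1'\,\psi_t^2$.

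The genuine gap is in your final paragraph. The claimed bound $\int_S^T f(E)\int_{\{|\psi_t|\le 1\}}\psi_t^2\,dx\,dy\,dt\le C\,E(S)f(E(S))$ does not follow from $(H_0)$, \eqref{dissip} and Jensen's inequality. On the small-velocity set the dissipation only controls $\int H(\psi_t^2)\le c_1^{-1}\int\psi_t\chi_1(\psi_t)$, and Jensen gives $|G|^{-1}\int_G\psi_t^2\le H^{-1}\bigl(|G|^{-1}\int_G H(\psi_t^2)\bigr)$; since $H(x)=\sqrt{x}\,g(\sqrt{x})$ can be much smaller than $x$ near the origin (take $g(s)=s^3$, so $H(x)=x^2$ and $H^{-1}(y)=\sqrt{y}$), the resulting quantity $H^{-1}\bigl(C(-E')\bigr)$ is not $O(-E')$, and integrating it against $f(E)$ does not produce $E(S)f(E(S))$. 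This is exactly the difficulty the optimal-weight convexity method is designed for: the paper does \emph{not} eliminate the $\psi_t^2$ term at the level of this lemma but keeps it (it is the $\rho_2\int f(E)\int q_{1x}\psi_t^2$ term in the identity of the paper's proof, and it survives as the $|\psi_t|^2$ terms on the right of \eqref{bgk} and \eqref{domenrg}), disposing of it only in Proposition \ref{lemmado1} via the Young inequality $AB\le\widehat{H}^{\star}(A)+\widehat{H}(B)$ applied with the specific weight $f=L^{-1}(\cdot/2\beta)$, whose extra $\int\widehat{H}^{\star}(f(E))$ contribution is then absorbed by the choice of $\beta$. The correct repair is therefore not your pointwise argument but simply to record an additional summand $\kappa\int_S^T f(E(t))\int_\Omega\psi_t^2\,dx\,dy\,dt$ on the right-hand side of \eqref{jht} (it is harmless downstream); as written, the statement of \eqref{jht} omits this term, and your attempt to justify that omission is where the proof fails.
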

\begin{proof}
Let $q_1$ be a  $\mathcal{C}^1([0,L_1])-$function  such that $q_1(0)=-q_1(L_1)=2$. Then, multiplying the equation \eqref{MTD2} by $f(E(t))q_1\psi_x$, we obtain 
\begin{eqnarray}
&&\rho_2\int_S^T f(E(t))\int_{\Omega} \psi_{tt}q_1\psi_x dx dy dt -D\int_S^T f(E(t))\int_{\Omega} \psi_{xx}q_1\psi_x dx dy dt \nonumber\\&& - D\left(\frac{1-\mu}{2}\right)\int_S^T f(E(t))\int_{\Omega} \psi_{yy} q_1 \psi_x dx dy dt\\ && - D\left(\frac{1+\mu}{2}\right) \int_S^T f(E(t))\int_{\Omega} \varphi_{xy} q_1 \psi_x dx dy dt \nonumber\\&&+\int_S^T f(E(t))\int_{\Omega} K(\psi+w_x)q_1\psi_x  dx dy dt +\int_S^T f(E(t))\int_{\Omega} \chi_1(\psi_t)q_1\psi_x=0 \nonumber.
\end{eqnarray}
Then, integrating by parts with respect to the time variable  $t$ and the space variable $x$ and using the fact that $\psi=0$ at the boundary, we find
\begin{eqnarray*}
&&\rho_2\int_S^T f(E(t))\int_{\Omega }q_{1x}\frac{ \psi_t^2}{2} dx dy dt + \rho_2 \left[f(E(t))\int_{\Omega} q_1 \psi_x \psi_t dx dy \right]_S^{T}\\ &&- \rho_2 \int_S^T f'(E(t))E'(t)\int_{\Omega} q_1\psi_x\psi_t dx dy dt \nonumber\\ && -\frac{D}{2}\left(\int_S^Tf(E(t))\int_0^{L_2} \Big(q_1(L_1) \psi_x^2(L_1,y,t) -q_1(0)\psi_x^2(0,y,t)\Big) dy dt\right)\\ &&+ D\int_S^Tf(E(t))\int_{\Omega}q_{1x} \frac{\psi_x^2}{2} dx dy dt \nonumber\\ &&
-D\left(\frac{1-\mu}{2}\right)\int_S^T f(E(t))\int_{\Omega} \psi_{yy}q_1\psi_x  dx dy dt\\ && -D\left(\frac{1+\mu}{2}\right)\int_S^T f(E(t))\int_{\Omega} \varphi_{xy}q_1 \psi_x dx dy dt \nonumber\\ &&
+K\int_S^T f(E(t))\int_{\Omega} (\psi+w_x)q_1 \psi_x dx dy dt+ \int_S^T f(E(t))\int_{\Omega}  \chi_1(\psi_t)q_1 \psi_xdx dy dt=0. \nonumber
\end{eqnarray*}
Employing $q_1(0)=2=-q_1(L_1)$, we infer that
\begin{equation*} 
\begin{array}{lcl}
& & \displaystyle D \left(\int_S^Tf(E(t)) \int_0^{L_2} \left[ \psi_x^2(L_1,y,t) + \psi_x^2(0,y,t) \right]dy  \right) \\ && \displaystyle=\rho_2 \int_S^T f'(E(t))E'(t)\int_{\Omega} q_1\psi_x\psi_t dx dy dt -\rho_2 \left[f(E(t))\int_{\Omega} q_1 \psi_x \psi_t dx dy \right]_S^{T} \\ &&\displaystyle -\rho_2 \int_S^T f(E(t))\int_{\Omega} q_{1x}\frac{\psi_t^2 }{2}dx dy dt   - D\int_S^Tf(E(t))\int_{\Omega}q_{1x} \frac{\psi_x^2}{2} dx dy dt \\ && 
+\displaystyle D\left(\frac{1-\mu}{2}\right)\int_S^T f(E(t))\int_{\Omega} \psi_{yy}q_1\psi_x  dx dy dt\\ && \displaystyle+D\left(\frac{1+\mu}{2}\right)\int_S^T f(E(t))\int_{\Omega} \varphi_{xy}q_1 \psi_x dx dy dt \\ &&
\displaystyle-K\int_S^T f(E(t))\int_{\Omega} (\psi+w_x)q_1 \psi_x dx dy dt - \int_S^T f(E(t))\int_{\Omega}  \chi_1(\psi_t)q_1 \psi_x dx dy dt,\\ && \quad  \forall \ S \leq t \leq T.
\end{array}
\end{equation*}
Using the fact that $q_1$ is a bounded function, denoting $\kappa=max_{x\in[0,L_1]}|q_1(x)|$  and using the Young's inequality, we can easily  estimate the term  $$\displaystyle K\int_S^T f(E(t))\int_{\Omega} (\psi+w_x)q_1 \psi_x dx dy dt.$$ 

Hence, we obtain for all $\eta >0$ the desired result \eqref{jht}.
\end{proof}
\begin{lemma}\label{l75}
Let $U=(w,w_t,\psi,\psi_t,\varphi,\varphi_t) $ be the solution of \eqref{MTD1}--\eqref{MTD3} with  the boundary conditions \eqref{BD}. Then, for any   non-negative and strictly increasing  $\mathcal{C}^1-$function $f$, we have, for all  $S \leq t \leq T$,
\begin{equation}
\begin{array}{lcl}\label{gte}
&&\displaystyle\int_S^Tf(E(t)) \int_0^{L_2} \left[ w_x^2(L_1,y,t) + w_x^2(0,y,t) \right]dy  \\&& \leq c \displaystyle E(S)f(E(S))+c \int_S^T f(E(t))\int_{\Omega}\left(w_t^2+w_x^2+\psi_x^2 + |\varphi_y+w_{yy}|^2\right)dx dy dt,   
\end{array}
\end{equation}
where $c$ is a positive constant.
\end{lemma}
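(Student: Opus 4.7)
The plan is to mirror exactly the strategy used for Lemma \ref{l74}, but applied to the wave equation \eqref{MTD1} with the multiplier $f(E(t))\, q_1(x)\, w_x$, with $q_1\in C^{1}([0,L_1])$ satisfying $q_1(0)=-q_1(L_1)=2$. Multiplying \eqref{MTD1} by $f(E(t))q_1 w_x$ and integrating over $[S,T]\times\Omega$ gives the starting identity
\begin{equation*}
\rho_1\!\!\int_S^T\!\!f(E(t))\!\!\int_\Omega w_{tt} q_1 w_x - K\!\!\int_S^T\!\!f(E(t))\!\!\int_\Omega (\psi+w_x)_x q_1 w_x - K\!\!\int_S^T\!\!f(E(t))\!\!\int_\Omega (\varphi+w_y)_y q_1 w_x = 0.
\end{equation*}
Splitting $(\psi+w_x)_x=\psi_x+w_{xx}$ and using $\int_0^{L_1}q_1 w_x w_{xx}\,dx=\tfrac12[q_1 w_x^2]_0^{L_1}-\tfrac12\int_0^{L_1}q_{1x}w_x^2\,dx$, together with the prescribed values $q_1(0)=2$, $q_1(L_1)=-2$, brings the desired boundary quantity $K\int_S^T f(E(t))\int_0^{L_2}[w_x^2(L_1,y,t)+w_x^2(0,y,t)]\,dy\,dt$ onto the left-hand side.

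Next I would handle the inertial term $\rho_1\int w_{tt}q_1 w_x$ by integration by parts in $t$, and then in $x$ for the resulting $\int w_t q_1 w_{xt}$; the $x$-boundary contributions vanish because $w_t=0$ on $\Gamma_2$. The time-boundary and $f'(E)E'$ pieces are absorbed into $c\,E(S)f(E(S))$ in precisely the manner of \eqref{cc3}--\eqref{cc4}. The cross term $-K\int f(E(t))\int \psi_x q_1 w_x$ and the interior contribution $\tfrac{K}{2}\int f(E(t))\int q_{1x}w_x^2$ are dispatched by Young's inequality together with the elementary bound $w_x^2\leq 2(\psi+w_x)^2+2\psi^2$; the $\psi^2$ piece is then estimated by $C\psi_x^2$ via Poincar\'e's inequality in $x$, since $\psi=0$ on $\Gamma_2$. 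These manipulations are precisely what produces the two terms $\psi_x^2$ and $|\psi+w_x|^2$ on the right-hand side of \eqref{gte}.

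The main obstacle is the cross-coupling contribution $K\int_S^T f(E(t))\int_\Omega(\varphi+w_y)_y q_1 w_x\,dx\,dy\,dt$. My plan is a double integration by parts: first in $y$ (boundary terms vanish because $\varphi=0$ and $w_x=0$ on $\Gamma_1$, the latter because $w\equiv 0$ there makes $w$ constant along $\Gamma_1$), then in $x$ (boundary terms vanish because $\varphi=0$ and $w_y=0$ on $\Gamma_2$, the latter coming from $w\equiv 0$ on $\Gamma_2$). What remains is a sum of interior integrals of the type $\int q_1 \varphi_x w_y$, $\int q_{1x}\varphi w_y$ and $\int q_{1x} w_y^2$. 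Each of these is pointwise controlled by the energy density defining $E(t)$, so by Young's inequality and the monotonicity $E(t)\leq E(S)$ they are absorbed into $c\,E(S)f(E(S))$ (and, if convenient, into the $\int f(E(t))\int(\psi_x^2+|\psi+w_x|^2)$ reservoir, which itself bounds the total energy up to the missing $w_x^2$ piece already controlled above).

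Finally, any $\varepsilon\int_S^T f(E(t))\int_\Omega w_x^2$ terms generated by Young's inequality are absorbed by using once more $w_x^2\leq 2(\psi+w_x)^2+2\psi^2$ with the Poincar\'e bound on $\psi$, and choosing $\varepsilon$ small enough leaves the boundary term on the left with a positive coefficient. Collecting all contributions yields \eqref{gte} with a generic constant $c$ depending on $\kappa=\max_{[0,L_1]}|q_1|$ and on the physical parameters $\rho_1,K,D,L_1,L_2$.
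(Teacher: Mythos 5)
Your overall strategy is exactly the paper's: multiply \eqref{MTD1} by $f(E(t))q_1w_x$, integrate over $[S,T]\times\Omega$, extract the boundary integral of $w_x^2$ from $\int q_1 w_xw_{xx}$ using $q_1(0)=-q_1(L_1)=2$, treat the inertial term by integration by parts in $t$ and $x$, bound the time-boundary and $f'(E)E'$ contributions by $cE(S)f(E(S))$ as in \eqref{cc3}--\eqref{cc4}, and dispatch $K\int f(E)\int \psi_x q_1 w_x$ and $\int f(E)\int q_{1x}w_x^2$ via Young, Poincar\'e and $w_x^2\leq 2|\psi+w_x|^2+2\psi^2$. Up to that point the proposal matches the paper's identity \eqref{llln} and the subsequent estimates.

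The genuine problem is your treatment of the cross-coupling term $K\int_S^T f(E(t))\int_\Omega(\varphi+w_y)_y\,q_1 w_x$. The double integration by parts is fine (and the vanishing of the tangential derivatives $w_x$ on $\Gamma_1$ and $w_y$ on $\Gamma_2$ is correctly justified), but the resulting interior integrals $\int q_1\varphi_x w_y$, $\int q_{1x}\varphi w_y$, $\int q_{1x}w_y^2$ are energy-density terms integrated in time against the weight $f(E(t))$. The inequality $E(t)\leq E(S)$ only gives $\int_S^T f(E(t))\,E(t)\,dt\leq (T-S)\,E(S)f(E(S))$, which is not uniform in $T$; an unconditional bound $\int_S^T f(E(t))E(t)\,dt\leq cE(S)f(E(S))$ is precisely the (nontrivial) conclusion of Propositions \ref{lemmado} and \ref{lemmado1}, so invoking it here is circular. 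Such absorption is legitimate only for endpoint evaluations $[\cdot]_S^T$ or for integrands carrying a factor $-E'(t)$. Nor can these terms be hidden in the reservoir $\int f(E)\int(\psi_x^2+|\psi+w_x|^2)$, which controls neither $w_y^2$ nor $\varphi_xw_y$. To be fair, this is exactly where the paper's own proof is silent: the term appears on the right of \eqref{llln} but is absent from the stated bound \eqref{gte}, with no explanation; the analogous terms in Lemmas \ref{l1888} and \ref{0001} are instead carried along explicitly rather than absorbed. So the correct repair is to keep this cross term explicitly on the right-hand side (as is done for $D\int f(E)\int(\varphi+w_y)_y\psi_x$ in \eqref{bgk}) rather than to claim it is absorbed into $cE(S)f(E(S))$.
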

\begin{proof}
Let $q_1$ be a  $\mathcal{C}^1([0,L_1])-$function  such that $q_1(0)=-q_1(L_1)=2$.\\
First, we multiply the equation \eqref{MTD1} by $f(E(t))q_1 w_x$ and we integrate with respect to time ($t$) and space ($x$) variables. Hence, we obtain, for all $ S \leq t \leq T,$
\begin{equation*}
\begin{array}{lcl}\label{llln}
&& \displaystyle\left(\int_S^Tf(E(t)) \int_0^{L_2}\left[  w_x^2(L_1,y,t) + w_x^2(0,y,t) \right]dy  \right)\\ &&=\displaystyle\rho_1 \int_S^T f'(E(t))E'(t)\int_{\Omega} q_1w_x w_t dx dy dt -\rho_2 \left[f(E(t))\int_{\Omega} q_1 w_x w_t dx dy \right]_S^{T} \\ &&\displaystyle - K\int_S^Tf(E(t))\int_{\Omega}q_{1x} \frac{w_x^2}{2} dx dy dt  
+K\int_S^T f(E(t))\int_{\Omega} \psi_x q_1 w_x  dx dy dt  \\ &&+\displaystyle K\int_S^T f(E(t))\int_{\Omega} (\varphi+w_y)_y q_1 w_x  dx dy dt-\rho_1\int_S^T f(E(t))\int_{\Omega} q_{1_x}\frac{w_t^2}{2}  dx dy dt. 
\end{array}
\end{equation*}
Using the fact that $q_1$ is a bounded function, recalling the definition $$\kappa=max_{x\in[0,L_1]}|q_1(x)|,$$  and utilizing the Poincar\'e  inequality for $\psi$ to estimate the term  $$ \displaystyle K\int_S^T f(E(t))\int_{\Omega} \psi_x q_1 w_x,$$ and the Young inequality, we end up with estimating the term $$\displaystyle \rho_2 \int_S^T f'(E(t))E'(t)\int_{\Omega} q_1w_xw_t dx dy dt,$$ and 
 $\displaystyle \rho_1 \left[f(E(t))\int_{\Omega} q_1 w_x w_t dx dy \right]_S^{T}$. This concludes the desired result \eqref{gte}.
 \end{proof}
\begin{lemma}\label{l1888}
Let $U=(w,w_t,\psi,\psi_t,\varphi,\varphi_t) $ be the solution of \eqref{MTD1}--\eqref{MTD3} with  the boundary conditions \eqref{BD}. Assume that $K>1$. Then, for any   non-negative  and strictly increasing  $\mathcal{C}^1-$function $f$, we have
\begin{equation*}
\begin{array}{lcl}\label{bgk}
&&C\displaystyle\int_{S}^T f(E(t)) \int_{\Omega} |\psi+w_x|^2  dx  dy  dt\\ && \displaystyle\leq C\displaystyle \int_{S}^T f(E(t)) \int_{\Omega} | \chi_1(\psi_t)|^2 dx dy  \, dt\\ && + \displaystyle\frac{1}{2}   \left(\frac{\rho_1D}{K}-\rho_2\right)^2  \int_S^T   f(E(t)) \int_{\Omega} |\psi_{tt}|^2  dx  dy \, dt \\
 &&+C\displaystyle\int_S^T f(E(t))\int_{\Omega}|\psi_{t}|^2+ |\psi_x|^2 dx dy dt  + C E(S)f(E(S)) \\ && 
  +\displaystyle \int_{S}^T f(E(t)) \int_{\Omega}   (\psi+w_x+\psi_x) \left(D\left( \frac{1-\mu}{2}
   \right)\psi_{yy} +  D\left( \frac{1+\mu}{2} \right)\varphi_{xy} \right) dx dy \,dt\\
  &&+ \displaystyle D  \int_{S}^T f(E(t)) \int_{\Omega} (\varphi+w_y)_y \ \psi_x  dx dy\, dt,
\end{array}
\end{equation*}
where $C$ is a positive  constant which depends on $K, \eta, \kappa, \kappa_i; i=\lbrace1,\ldots,3\rbrace$.
\end{lemma}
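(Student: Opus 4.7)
The strategy is to combine the three preceding lemmas (Lemma~\ref{l178}, Lemma~\ref{l74}, Lemma~\ref{l75}) and then absorb the boundary/remainder terms using the hypothesis $K>1$. I will only sketch how the pieces fit together.

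\textbf{Step 1: starting inequality.} I begin from the conclusion \eqref{llkk} of Lemma~\ref{l178}. Its left-hand side already contains $(K-1)\int_S^T f(E(t))\int_\Omega |\psi+w_x|^2\,dx\,dy\,dt$, and since $K>1$ this is a strictly positive multiple of the target quantity. On the right-hand side, all the terms present in the statement of Lemma~\ref{l1888} (the $|\chi_1(\psi_t)|^2$ term, the $(\rho_1D/K-\rho_2)^2 |\psi_{tt}|^2$ term, the $|\psi_t|^2$ term, the boundary data $E(S)f(E(S))$, the coupling terms carrying $\psi_{yy}$ and $\varphi_{xy}$ multiplied by $(\psi+w_x)$, and the $(\varphi+w_y)_y\,\psi_x$ term) are already there. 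What remains to control is the two genuine boundary contributions
\[
\frac{D}{2\varepsilon}\int_S^T f(E(t))\!\int_0^{L_2}\!\bigl[\psi_x^2(L_1,y,t)-\psi_x^2(0,y,t)\bigr]dy\,dt,\qquad \frac{D\varepsilon}{2}\int_S^T f(E(t))\!\int_0^{L_2}\!\bigl[w_x^2(L_1,y,t)-w_x^2(0,y,t)\bigr]dy\,dt,
\]
which I bound in absolute value by the corresponding sums and hand off to the next two steps.

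\textbf{Step 2: eliminating the $\psi_x$ boundary trace via Lemma~\ref{l74}.} Applying \eqref{jht} controls the first boundary quantity above by: (i) $\kappa_1 E(S)f(E(S))$, (ii) $\kappa_2(1+1/\eta)\int f(E(t))\int_\Omega \psi_x^2$, (iii) $\eta\int f(E(t))\int_\Omega|\psi+w_x|^2$, (iv) $\kappa_1\int f(E(t))\int_\Omega|\chi_1(\psi_t)|^2$, plus a coupling term with $\psi_{yy}\psi_x$ and $\varphi_{xy}\psi_x$. The coupling pieces here, once combined with the analogous ones from Step~1 which carry the factor $(\psi+w_x)$, produce exactly the factor $(\psi+w_x+\psi_x)$ that appears in the statement of Lemma~\ref{l1888}.

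\textbf{Step 3: eliminating the $w_x$ boundary trace via Lemma~\ref{l75}.} The estimate \eqref{gte} disposes of the $w_x$ boundary terms in favor of $cE(S)f(E(S))$ together with $c\int f(E(t))\int_\Omega\psi_x^2$ and $c\int f(E(t))\int_\Omega|\psi+w_x|^2$. These fall into categories already present on the right-hand side of the target inequality, except for the last one which must be absorbed.

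\textbf{Step 4: absorption and conclusion.} Substituting the outputs of Steps~2 and 3 back into the inequality of Step~1, I see that the right-hand side picks up two new multiples of $\int_S^T f(E(t))\int_\Omega|\psi+w_x|^2$: one proportional to $\eta\,D/(2\varepsilon)$ (coming from clause (iii) of Step~2 multiplied by the weight $D/(2\varepsilon)$) and one proportional to $D\varepsilon c/2$ (from Step~3). Since the corresponding coefficient on the left is the strictly positive constant $K-1$, I choose first $\varepsilon$ small enough that $D\varepsilon c/2<(K-1)/4$, and then $\eta$ small enough (depending on $\varepsilon$) so that the term $\eta\,D/(2\varepsilon)\kappa_1$ is also $<(K-1)/4$. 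This leaves a coefficient $\geq (K-1)/2>0$ in front of $\int|\psi+w_x|^2$ on the left, and dividing through produces the required inequality with a new constant $C=C(K,\varepsilon,\eta,\kappa,\kappa_1,\kappa_2,\kappa_3)$.

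\textbf{Main obstacle.} The genuinely delicate point is the absorption in Step~4: one has to order the choices $\varepsilon$ first, $\eta$ second, because Lemma~\ref{l74} mixes them through the factor $\kappa_2(1+1/\eta)$, and one must check that after this choice the coefficient of $\int\psi_x^2$ on the right is still finite (it is: it equals a constant plus $\kappa_2(1+1/\eta)$, both with $\eta$ now fixed). Everything else is routine bookkeeping---keeping track of constants, grouping the two coupling terms involving $(\psi+w_x)$ and $\psi_x$ into the single factor $(\psi+w_x+\psi_x)$ displayed in the statement, and recording that the $(\rho_1D/K-\rho_2)^2$ coefficient survives unchanged from Lemma~\ref{l178}.
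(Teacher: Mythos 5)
Your proposal is correct and follows exactly the paper's route: the paper's own proof of this lemma is the single line ``Combining the estimates \eqref{llkk}, \eqref{jht} and \eqref{gte} yields the desired result,'' and your Steps 1--4 simply spell out that combination, including the $\varepsilon$-then-$\eta$ absorption of the $|\psi+w_x|^2$ remainders into the $(K-1)$ coefficient that the paper leaves implicit.
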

\begin{proof}
Combining the estimates \eqref{llkk}, \eqref{jht} and \eqref{gte} yields the desired result.
\end{proof}

\begin{lemma} \label{0001}
Let $U=(w,w_t,\psi,\psi_t,\varphi,\varphi_t) $ be the solution of \eqref{MTD1}--\eqref{MTD3} with  the boundary conditions \eqref{BD}. Then, for any   non-negative  and strictly increasing  $\mathcal{C}^1-$function $f$, we find that
\begin{equation}
\begin{array}{lcl}\label{BB}
 &&(K-1) \displaystyle\int_{S}^T f(E(t)) \int_{\Omega} |\varphi+w_y|^2  dx  dy  dt\\ &&\leq  c\displaystyle \int_S^T f(E(t))\int_{\Omega} |\chi_2(\varphi_t)|^2 dx dy dt + cE(S)f(E(S))  \\ && \displaystyle+\frac{1}{2}  \left(\rho_2 - \frac{\rho_1D}{K}\right)^2  \int_S^T   f(E(t)) \int_{\Omega} |\varphi_{tt}|^2 \ dx  dy dt  
 \\&&\displaystyle+ c\int_S^T f(E(t))\int_{\Omega}\Big(|\varphi_{t}|^2 +|\varphi_y|^2\Big) dx  dy  dt \\  &&\displaystyle +\int_{S}^T f(E(t)) \int_{\Omega}  (\varphi+w_y+\varphi_y)\left( D\left( \frac{1-\mu}{2} \right)\varphi_{xx} + D\left( \frac{1+\mu}{2} \right)\psi_{xy} \right) dx dy dt \\
  &&+ \displaystyle D \int_{S}^T f(E(t)) \int_{\Omega} (w_t^2+w_y^2+\varphi_y^2+|(\psi+w_x)_x|^2)  dx dy dt . 
\end{array}
\end{equation}
\end{lemma}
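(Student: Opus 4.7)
The plan is to mirror, with the roles of $x$ and $y$ (and of $\psi$ and $\varphi$) swapped, the chain of arguments that produced Lemma \ref{l1888}. The structural symmetry between equations \eqref{MTD2} and \eqref{MTD3} in the system, together with the symmetry of the boundary $\Gamma = \overline{\Gamma}_1 \cup \overline{\Gamma}_2$, is what makes this substitution legitimate, so very little genuinely new work is required.

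First, I would reproduce the analog of Lemma \ref{l178} for $\varphi + w_y$: multiply \eqref{MTD1} by $f(E(t))\tfrac{D}{K}\varphi_y$ and \eqref{MTD3} by $f(E(t))(\varphi+w_y)$, integrate over $[S,T]\times \Omega$, add, and integrate by parts in $x$, $y$ and $t$. The leading term becomes $K \int_S^T f(E(t))\int_\Omega |\varphi+w_y|^2\,dx\,dy\,dt$, which produces the left-hand side of \eqref{BB} (after moving a $|\varphi+w_y|^2$ contribution that is absorbed via Young's inequality, as in \eqref{cc1}--\eqref{cc2}). The cross-coupling terms $D(\tfrac{1+\mu}{2})\psi_{xy}$ and $D(\tfrac{1-\mu}{2})\varphi_{xx}$, together with the $D\,(\psi + w_x)_x\,\varphi_y$ contribution coming from the multiplier $\varphi_y$ in \eqref{MTD1}, are exactly the terms kept on the right-hand side of \eqref{BB}. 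The coefficient $\tfrac12(\rho_2 - \tfrac{\rho_1 D}{K})^2 \int f(E)\int |\varphi_{tt}|^2$ appears in the same way as in Lemma \ref{l178} by Young's inequality applied to $(\rho_2-\tfrac{\rho_1 D}{K})\int f(E)\int \varphi_{tt}(w_y+\varphi)$.

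Next I would handle the boundary contributions, replicating Lemmas \ref{l74} and \ref{l75}. I would pick $q_2 \in C^1([0,L_2])$ with $q_2(0) = -q_2(L_2) = 2$, multiply \eqref{MTD3} by $f(E(t))q_2 \varphi_y$ and \eqref{MTD1} by $f(E(t))q_2 w_y$, then integrate by parts in $y$. The boundary contributions produce $\int f(E(t))\bigl[\int_0^{L_1} \varphi_y^2(x,L_2,t) + \varphi_y^2(x,0,t)\,dx\bigr]\,dt$ and the analogous $w_y^2$ term, each controlled by $E(S)f(E(S))$, by $\int f(E)\int (|\varphi+w_y|^2 + \varphi_y^2)$, and by $\int f(E)\int |\chi_2(\varphi_t)|^2$, modulo a small absorbable constant $\eta$ multiplying $\int f(E)\int |\varphi + w_y|^2$. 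This is the exact analog of the boundary estimates \eqref{jht} and \eqref{gte}, and the computations require no new ideas.

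Finally, combining these three ingredients exactly as in the proof of Lemma \ref{l1888}, absorbing the $\eta$-small term into the left-hand side by choosing $\eta$ small (possible because $K > 1$ plays the same role as before), yields \eqref{BB}. The main technical nuisance, as in the $\psi+w_x$ case, is bookkeeping the boundary terms produced by integration by parts in $x$ when the multiplier is $\varphi_y$: one must use $\varphi = w = 0$ on $\Gamma$ together with the Young inequality to dispose of terms of the form $\int_0^{L_1}\varphi_y\,w_y\,dx$ at $y=0, L_2$, which is precisely how the $\varepsilon$-splitting in \eqref{158} worked. Since the global energy $E(t)$ is symmetric in $(\psi, \varphi)$ and the boundary geometry is symmetric between $\Gamma_1$ and $\Gamma_2$, every constant and every sign works out in direct parallel.
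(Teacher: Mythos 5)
Your proposal matches the paper's own proof essentially step for step: the paper likewise multiplies \eqref{MTD1} by $f(E(t))\tfrac{D}{K}\varphi_y$ and \eqref{MTD3} by $f(E(t))(\varphi+w_y)$, sums and integrates by parts as in Lemma \ref{l178}, and then controls the resulting boundary term with the multipliers $f(E(t))q_2\varphi_y$ and $f(E(t))q_2 w_y$ for $q_2(0)=-q_2(L_2)=2$, exactly as you propose by analogy with Lemmas \ref{l74} and \ref{l75}. The symmetry argument you invoke is precisely what the authors rely on, so no further comment is needed.
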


\begin{proof}
 We multiply the  equations  \eqref{MTD1} and  \eqref{MTD3}, respectively,  by $f(E(t)) \frac{D}{K}\varphi_y$ and $f(E(t))  (\varphi+w_y)$, then we integrate the resulting equations over $[S,T]\times$ $\Omega$ as follows:
\begin{equation}
\begin{array}{lcl}\label{eq105}
 &\displaystyle \rho_1 \frac{D}{K} \int_S^T   f(E(t)) \int_{\Omega} w_{tt}  \varphi_y  dx dy  dt - D \int_{S}^T f(E(t)) \int_{\Omega} (\psi+w_x)_x \ \varphi_y\  dx  dy dt &\\
 &\displaystyle  - D  \int_{S}^T f(E(t)) \int_{\Omega} (\varphi+w_y)_y \ \varphi_y  dx dy dt =0,&
\end{array}
\end{equation}
  and
\begin{equation}
\begin{array}{lcl}\label{eq106}
 	&&\displaystyle\rho_2  \int_{S}^T f(E(t)) \int_{\Omega} \varphi_{tt} (\varphi+w_y) \ dx \ dy \ dt+ K \int_{S}^T f(E(t)) \int_{\Omega} |\varphi+w_y|^2 dx  dy  dt \\
 &&- D \displaystyle \left(\frac{1-\mu}{2}\right)\int_{S}^T f(E(t)) \int_{\Omega} \varphi_{xx} (\varphi+w_y)  dx dy  dt
 \\ &&- \displaystyle D \left(\frac{1+\mu}{2}\right) \int_{S}^T f(E(t)) \int_{\Omega}  \psi_{xy} (\varphi+w_y)dx dy  dt    \\&&\displaystyle+ \int_{S}^{T}f(E(t)) \int_{\Omega} \chi_2(\varphi_t)(\varphi+w_y)dx dy  dt \\ &&- \displaystyle D  \int_{S}^T f(E(t)) \int_{\Omega}\varphi_{yy}(\varphi+w_y) dx dy  dt=0. 
\end{array}
\end{equation}
Then, performing similar  computations as in Lemma \ref{l178}, adding the equations  \eqref{eq105} and \eqref{eq106}, and integrating by parts with respect to time and space,  the following  boundary term appears in the computations
\begin{equation}
D \int_S^T f(E(t))\int_0^{L_1} \left[\varphi_y(x,L_2,t) w_y(x,L_2,t)- \varphi_y(x,0,t) w_y(x,L_2,t) \right]dx  dt.
\end{equation}
In the purpose to estimate the above quantity, we use similar calculations as in the proof of Lemmas \ref{l74} and  \ref{l75}, namely we set the function $q_2\in \mathcal{C}^1([0,L_2])$  satisfying 
$q_2(0)=-q_2(L_2)=2$ and we denote by $\theta=max_{y\in [0,L_2]}|q_2(y)| $. Next, we multiply the equation  \eqref{MTD3} by $f(E(t))q_2\varphi_y$ and \eqref{MTD1} by $f(E(t))q_2 w_y$, 
\begin{equation}
\begin{array}{lcl}\label{oo}
 &&\displaystyle\rho_2\int_S^T f(E(t))\int_{\Omega} \varphi_{tt} q_2(x)\varphi_y dx dy dt -D\int_S^T f(E(t))\int_{\Omega} \varphi_{yy}q_2 \varphi_y dx dy dt  \\ &&\displaystyle -D\left(\frac{1-\mu}{2}\right)\int_S^T f(E(t))\int_{\Omega} \varphi_{xx}q_2 \varphi_y dx dy dt\\&&\displaystyle +K\int_S^T f(E(t))\int_{\Omega}(\varphi+w_y)q_2 \varphi_y dx dy dt \\ &&\displaystyle +\int_S^Tf(E(t))\int_{\Omega}\chi_2(\varphi_t)q_2\varphi_y dx dy dt-D\left(\frac{1+\mu}{2}\right)\int_S^T f(E(t))\int_{\Omega} \psi_{xy}q_2\varphi_y dx dy dt =0,
 \end{array}
 \end{equation}
 and
\begin{equation}
\begin{array}{lll}\label{ooo1}
 &&\displaystyle \rho_1 \int_S^T f(E(t)) \int_{\Omega} w_{tt} q_2w_y dx dy-K\int_S^T f(E(t))\int_{\Omega} (\psi+w_x)_x q_2 w_y  \\ &&\displaystyle-K\int_S^T f(E(t)) \int_{\Omega} (\varphi+w_y)_y q_2 w_y=0.
 \end{array}
 \end{equation}
 Therefore, we start by the equation \eqref{oo} where we integrate by parts with respect to the time variable  $t$ and the space variable $x$. Then, using the fact that $\varphi=0$ at the boundary, we find
\begin{equation*}
\begin{array}{lcl}
&&\displaystyle\rho_2 \int_S^T f(E(t))\int_{\Omega}\frac{\varphi_t^2}{2}q_{2y} dx dy dt+\rho_2\left[f(E(t))\int_{\Omega} q_2\varphi_y \varphi_t \right]_S^T\\ &&\displaystyle+K\int_S^Tf(E(t))\int_{\Omega} (\varphi+w_y)q_2 \varphi_y dx dy dt \\ && -\displaystyle\rho_2\int_S^T f'(E(t))E'(t)\int_{\Omega} q_2\varphi_y\varphi_t dx dy dt +D\int_S^T f(E(t))\int_{\Omega} q_{2y}\frac{\varphi_y^2}{2} dx dy dt\\ && \displaystyle-D\left(\frac{1-\mu}{2}\right) \int_{\Omega}\varphi_{xx}q_2 \varphi_y dx dy dt\\ &&\displaystyle -\frac{D}{2}\int_S^T f(E(t))\int_0^1\left(q_2(L_2)\varphi_y^2(x,L_2,t)-q_2(0)\varphi_y^2(x,0,t)\right)dxdt\\ &&\displaystyle -D\left(\frac{1+\mu}{2}\right)\int_S^T f(E(t))\int_{\Omega} \psi_{xy} q_2 \varphi_y dx dy dt +\int_S^T f(E(t))\int_{\Omega}\chi_2(\varphi_t)q_2\varphi_y dx dy dt=0.
 \end{array}
 \end{equation*} 
Employing the fact that $q_2$ is a bounded function by $\theta$ and that  $q_2(0)=2=-q_2(L_2)$, we can easily  estimate the terms \\ $ \displaystyle \rho_2 \int_S^T f'(E(t))E'(t)\int_{\Omega} q_2\varphi_y\varphi_t dx dy dt,$ and $\displaystyle \rho_2 \left[f(E(t))\int_{\Omega} q_2 \varphi_y \varphi_t dx dy \right]_S^{T}$.\\ Finally, we use the Young inequality to estimate the term 
$$\displaystyle K\int_S^Tf(E(t))\int_{\Omega} (\varphi+w_y)q_2 \varphi_y dx dy dt.$$
 Hence, we obtain for all $\eta_0>0$ the following estimate:
\begin{equation}
\begin{array}{lcl}\label{3332}
&&  \displaystyle D\int_S^T f(E(t))\int_0^{L_1}\left(\varphi_y^2(x,L_2,t)+\varphi_y^2(x,0,t)\right)dx dy dt  \\ &&  \displaystyle\leq c E(S)f(E(S))+\left(1+\frac{1}{\eta_0}\right)\int_S^T f(E(t))\int_{\Omega} \varphi_y^2 dx dy dt \\&& + \displaystyle\eta_0 \int_S^T f((E(t))\int_{\Omega}|\varphi+w_y|^2 dx dy dt +\theta_1 \int_S^T f(E(t))\int_{\Omega} \Big(|\chi_2(\varphi_t)|^2+\varphi_t^2\Big) dx dy dt  \\&& + \displaystyle \theta_2 D\left(\frac{1-\mu}{2}\right)\int_S^T f(E(t))\int_{\Omega}\varphi_{xx}\varphi_y dx dy dt \\&& + \displaystyle \theta_3 D\left( \frac{1+\mu}{2}\right)\int_S^T f(E(t))\int_{\Omega} \psi_{xy}\varphi_y dx dy dt,
 \end{array}
 \end{equation} 
where  $\theta_i$  ($i=1,\ldots, 3$) are  positive constants independent of $\eta_0$ .\\
Furthermore, we perform some estimates for the terms in \eqref{ooo1} in which we utilize the properties of the function $q_2$, and we obtain 
\begin{equation}
\begin{array}{lcl}\label{3331}
&&\displaystyle\int_S^T f(E(t))\int_0^{L_1}\left(w_y^2(x,L_2,t)+w_y^2(x,0,t)\right)dx dt  \leq cE(S)f(E(S))\\&&\displaystyle+\int_S^Tf(E(t))\int_{\Omega} \Big(w_t^2+w_y^2+\varphi_y^2+|(\psi+w_x)_x|^2\Big) dx dy dt. 
 \end{array}
 \end{equation}
Thanks to  \eqref{3332} and \eqref{3331}, we deduce the inequality \eqref{BB}.
\end{proof} 		

\subsection{Proof of Proposition \ref{lemmado}}\label{proof prop4.3}
Now we are able to prove Proposition \ref{lemmado}. 
\begin{proof}
First, thanks to the results obtained in Lemmas \ref{l75} and  \ref{0001}, we deduce that
\begin{equation}
\begin{array}{lcl}\label{4441}
&&\quad \displaystyle C\int_{S}^T f(E(t)) \int_{\Omega}\left( |\psi+w_x|^2 +  |\varphi+w_y|^2\right) dx  dy  dt  \\&& \displaystyle\leq c \int_{S}^T f(E(t)) \int_{\Omega} \left(| \chi_1(\psi_t)|^2 + |\chi_2(\varphi_t)|^2\right) dx dy  \, dt \\ && + \displaystyle\frac{1}{2}   |  \frac{\rho_1D}{K}-\rho_2|^2  \int_S^T   f(E(t)) \int_{\Omega}\left( |\psi_{tt}|^2  + |\varphi_{tt}|^2\right)  dx  dy \, dt   \\
 &&\displaystyle\quad+c\int_S^T f(E(t))\int_{\Omega}\left(|\psi_{t}|^2+ |\psi_x|^2 +|\varphi_{t}|^2 +|\varphi_y|^2\right) dx  dy  dt + c E(S)f(E(S)  \\ && \displaystyle
  \quad + \int_{S}^T f(E(t)) \int_{\Omega}   (\psi+w_x+\psi_x) \left(D\left( \frac{1-\mu}{2}
   \right)\psi_{yy} +  D\left( \frac{1+\mu}{2} \right)\varphi_{xy} \right) dx dy \,dt \\
  &&\displaystyle \quad+\int_{S}^T f(E(t)) \int_{\Omega}  (\varphi+w_y+\varphi_y)\left( D\left( \frac{1-\mu}{2} \right)\varphi_{xx} + D\left( \frac{1+\mu}{2} \right)\psi_{xy} \right) dx dy dt \\
  &&\displaystyle\quad + D  \int_{S}^T f(E(t)) \int_{\Omega} (\varphi+w_y)_y \ \psi_x  dx dy\, dt \\ && \displaystyle  
+  D \int_{S}^T f(E(t)) \int_{\Omega} (\psi+w_x)_x\ \varphi_{y}  dx dy dt .
 \end{array}
 \end{equation}
On the one hand,  we multiply the   equation \eqref{MTD1} by $f(E(t)) w $ and the equation  \eqref{MTD2}  by $f(E(t)) \psi $, then  we integrate over $[S,T]\times\Omega$ and we obtain
\begin{equation}
\begin{array}{lcl}\label{eqat1}
 	&&\displaystyle\rho_1\left[  f(E(t))\int_{\Omega} w_{t}w    \ dx \ dy \right]_S^T
 	-\rho_1  \int_S^T f'(E(t)) E'(t)
 	\int_{\Omega}w w_t  dx dy dt \\
&& - K \displaystyle\int_S^T f(E(t))\int_{\Omega} (\psi+w_x)_x w \ dx\ dy \ dt -\rho_1 \int_S^T f(E(t))\int_{\Omega} |w_t|^2   dx  dy  dt \\
&& -\displaystyle K \int_S^T f(E(t))
 \int_{\Omega} (\varphi+w_y)_y w  dx  dy  dt =0,
 \end{array}
 \end{equation}
  and
\begin{equation}
\begin{array}{lcl}\label{eqat2}
 &&\displaystyle \int_{S}^T f(E(t) \int_{\Omega}  \chi_1(\psi_t)\psi   dx  dy dt - \rho_2 \int_{S}^T f'(E(t)) E'(t) \int_{\Omega}\psi_t \psi  dx  dy   dt
 \\ &&-\displaystyle\rho_2 \int_{S}^T f(E(t)) \int_{\Omega} |\psi_t|^2   dx  dy  dt
 	-D \left(\frac{1-\mu}{2}\right)\int_{S}^T f(E(t) \int_{\Omega} \psi_{yy} \psi  dx  dy  dt \\ &&-\displaystyle D\left( \frac{1+\mu}{2} \right)\int_{S}^T f(E(t) \int_{\Omega}  \varphi_{xy} \psi dx  dy  dt  - D \int_{S}^T f(E(t)) \int_{\Omega} \psi_{xx}\psi dx dy dt  \\
 	&& \displaystyle+K \int_{S}^T f(E(t)) \int_{\Omega} (\psi+w_x)\psi dx dy dt  + \rho_2 \left[ f(E(t))\int_{\Omega} \psi \psi_t dx dy\right]_S^T =0.
 \end{array}
 \end{equation}
On the other hand,  multiplying the  equation  \eqref{MTD3} by $f(E(t)) \varphi$ and  integrating over $[S,T]\times\Omega$ yield
\begin{equation}
\begin{array}{lcl}\label{eqat3}
 	 &&-\displaystyle\rho_2 \int_{S}^T f(E(t)) \int_{\Omega} |\varphi_t|^2 dx dy  dt+ \rho_2\left[  f(E(t))\int_{\Omega} \varphi_{t}\varphi    dx  dy \right]_S^T  \\
 	 &&-\displaystyle\rho_2\int_S^T  f'(E(t)) E'(t)\int_{\Omega} \varphi_{t}\varphi    dx  dy  dt -D  \left(\frac{1-\mu}{2}\right) \int_{S}^T f(E(t)) \int_{\Omega}\varphi_{xx} \varphi  dx  dy  dt  \\ &&- \displaystyle D   \int_{S}^T f(E(t)) \int_{\Omega} \varphi_{yy} \varphi  dx  dy dt- D \left(\frac{1+\mu}{2}\right)\int_{S}^T f(E(t)) \int_{\Omega}  \psi_{xy} \varphi  dx  dy  dt \\ &&+\displaystyle K \int_{S}^T f(E(t)) \int_{\Omega} (\varphi+w_y)\varphi  dx dy dt
 	+\int_{S}^T f(E(t)) \int_{\Omega} \chi_2(\varphi_t)\varphi dx dy dt =0.
 \end{array}
 \end{equation}
 Next, by summing \eqref{eqat1}, \eqref{eqat2} and \eqref{eqat3},  we find
\begin{equation}
\begin{array}{lcl}\label{19}
 &&\displaystyle\int_{S}^T  f(E(t)) \int_{\Omega} \left( \rho_1 |w_t|^2 \!\!+ \!\! \rho_2  |\varphi_t|^2  + \rho_2 |\psi_t |^2\right)  dx dy  dt\\ && =\displaystyle
- \rho_2 \int_{S}^T f'(E(t)) E'(t) \int_{\Omega}\psi_t \psi  \, dx  dy  dt\\
&+&\displaystyle \!\! \rho_1\left[  f(E(t))\int_{\Omega} w_{t}w  \right]_S^T-\rho_1  \int_S^T f'(E(t)) E'(t)\int_{\Omega}w w_t  \, dx dy dt  \\ &-& \displaystyle \!\!
\rho_2\int_S^T \!\!  f'(E(t)) E'(t)\int_{\Omega} \varphi_{t}\varphi   \,   dx  dy dt +\rho_2 \left[  f(E(t)\int_{\Omega} \psi \psi_t\right]_S^T 
 \\ &+& \displaystyle \!\!  \rho_2\left[  f(E(t))\int_{\Omega} \varphi_{t}\varphi \right]_S^T-K \int_S^T f(E(t))\int_{\Omega} \left((\varphi+w_y)_y w+ (\psi+w_x)_x w \right) \,  dx  dy  dt \\ &+& \displaystyle \!\! K \int_S^T f(E(t))\int_{\Omega}  \left((\varphi+w_y)\varphi  + (\psi+w_x)\psi  \right) \, dx  dy  dt  \\ &-&\displaystyle D \left(\frac{1-\mu}{2}\right) \int_{S}^T f(E(t) \int_{\Omega} \left(\psi_{yy} \psi + \varphi_{xx} \varphi \right) \,dx  dy  dt  \\ &+&\displaystyle \!\! \frac{D}{2}  \int_{S}^T f(E(t) \int_{\Omega} \left( |\psi_{x}|^2 +|\varphi_{y}|^2 \right) \, dx  dy dt\\
   	&-&\displaystyle \!\! D \left(\frac{1+\mu}{2}\right) \int_{S}^T f(E(t) \int_{\Omega}  \left( \psi_{xy} \varphi +  \varphi_{xy} \psi\right)  \, dx  dy  dt \\ &+& \displaystyle\!\! \int_{S}^T f(E(t) \int_{\Omega} \left(  \chi_1(\psi_t)\psi +\chi_2(\varphi_t)\varphi \right)\,  dx dy  dt.
 \end{array}
 \end{equation}
 A straightforward computation yields
\begin{equation}
\begin{array}{lcl}\label{eql1}
  &&\displaystyle D \left(\frac{1-\mu}{2}\right)\int_{S}^T f(E(t) \int_{\Omega}|\psi_y+\varphi_x|^2 dx dy dt\\ &&\quad =
 \displaystyle D \left(\frac{1-\mu}{2}\right) \int_{S}^T f(E(t)) \int_{\Omega} \left(|\psi_{y}|^2 + |\varphi_{x}|^2 \right)  dx  dy dt\\
  &&\quad \displaystyle + D(1-\mu) \int_{S}^T f(E(t)) \int_{\Omega} \psi_y \varphi_x dx dy dt.
\end{array}
 \end{equation}
 Using  \eqref{4441}, \eqref{19}, \eqref{eql1} and taking in account that $v_1^2=v_2^2$, we find
\begin{equation*}
\begin{array}{lcl}\label{eq13}
&&\displaystyle\int_{S}^T f(E(t))  E(t) dt\\ &\leq & \displaystyle\int_{S}^T f(E(t)) \int_{\Omega} \left( \rho_1 |w_t|^2 +\rho_2  |\varphi_t|^2  + \rho_2 |\psi_t |^2 \ + D|\psi_x |^2 +D |\varphi_y |^2 \right)dx  dy  dt\\
&+&\displaystyle K \int_{S}^T f(E(t) \int_{\Omega}\left( |\varphi+w_y|^2+ |\psi+w_x|^2\right)  dx  dy  dt\\
&+&\displaystyle \int_{S}^T f(E(t)) \int_{\Omega} \left( 2D\mu \partial_y \partial _x (\psi \varphi) 
 +2D\mu \psi_x
\varphi_y \right)dx dy dt \\  & + & \displaystyle D \left(\frac{1-\mu}{2}\right)\int_{S}^T f(E(t) \int_{\Omega}|\psi_y+\varphi_x|^2 dx dy dt+CE(S)f(E(S)).
 \end{array}
 \end{equation*}

	Finally, remembering  the definition  of $E$, we end up with the desired  estimate \eqref{domenrg}.
\end{proof}	
	\begin{proposition}\label{lemmado1}
We assume   that the hypotheses of Theorem \ref{thm1}  hold. Then, we have  the following  inequality:
\begin{equation*}\label{ENE}
\int_S^T E(t)f(E(t))dt \leq \sigma E(S),
\end{equation*}
where $\sigma $ is a positive constant given by \eqref{M}.
\end{proposition}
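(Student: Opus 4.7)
The plan is to take the weighted nonlinear integral estimate of Proposition \ref{lemmado} as the starting point, with the weight $f$ chosen as in Remark \ref{rem4.1}, namely $f(s)=L^{-1}(s/(2\beta))$. The virtue of this choice is that it satisfies the identity $\widehat{H}^{\star}(f(s))=f(s)\,L(f(s))=\frac{s}{2\beta}f(s)$, so that Young's inequality applied with the convex conjugate pair $(\widehat{H},\widehat{H}^{\star})$ will produce a term of the form $\frac{1}{2\beta}\int_S^T f(E(t))E(t)\,dt$, which can be absorbed in the left-hand side provided $\beta$ is large enough, i.e.\ exactly under the admissibility condition $\beta\geq E(0)/(2L(H'(r_0^2)))$ built into the definition \eqref{beta}.

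The three terms on the right of \eqref{domenrg} have to be bounded by $E(S)$-type quantities. I would split each spatial integral over $\Omega$ into the linear zone $\Omega_{\text{lin}}(t)=\{(x,y):|\psi_t|\geq r_0\}$ (and similarly for $\varphi_t$) and the nonlinear zone $\Omega_{\text{nl}}(t)=\{(x,y):|\psi_t|<r_0\}$. On $\Omega_{\text{lin}}(t)$, the linear bounds of $(H_0)$ give $|\chi_i(\psi_t)|^2\leq\frac{c_2^2}{c_1}\psi_t\chi_1(\psi_t)$ and $|\psi_t|^2\leq\frac{1}{c_1}\psi_t\chi_1(\psi_t)$, so both contributions can be majorized, using the monotonicity of $f$ and $E$ and the dissipation law \eqref{dissip}, by
\[
C\,f(E(S))\int_S^T(-E'(t))\,dt\;\leq\;C\,f(E(S))\,E(S).
\]
On $\Omega_{\text{nl}}(t)$, I would apply Young's inequality for the pair $(\widehat{H},\widehat{H}^{\star})$ to $|\psi_t|^2$ and to $|\chi_i(\psi_t)|^2$ weighted by $f(E(t))$: for instance,
\[
f(E(t))|\psi_t|^2 \leq \widehat{H}^{\star}\bigl(f(E(t))\bigr)+\widehat{H}(|\psi_t|^2)=\tfrac{E(t)}{2\beta}f(E(t))+H(|\psi_t|^2),
\]
and the right-hand side nonlinear piece $H(|\psi_t|^2)=|\psi_t|g(|\psi_t|)$ is controlled by $\frac{1}{c_1}\psi_t\chi_1(\psi_t)$ via $(H_0)$, hence by the dissipation. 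An analogous application of the convex-conjugate inequality, now with $|\chi_i(\psi_t)|^2\leq c_2\, g^{-1}(|\psi_t|)|\chi_i(\psi_t)|$ and the bound $g^{-1}(|\psi_t|)|\chi_i(\psi_t)|\leq C\,\psi_t\chi_1(\psi_t)$ obtained by combining the two inequalities of $(H_0)$, treats the $|\chi_i|^2$ integrals on $\Omega_{\text{nl}}$ and gives rise to the constants $c_4,c_5,c_6$ that appear in the formula \eqref{M} for $\sigma$.

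Collecting all these estimates and plugging them into \eqref{domenrg} yields an inequality of the form
\[
\Bigl(1-\frac{C_0}{2\beta}\Bigr)\int_S^T f(E(t))E(t)\,dt \leq \alpha_1 E(S)f(E(S)) + C_1 E(S),
\]
for constants $C_0,C_1$ expressible in terms of $\alpha_2,\alpha_3,c_1,\ldots,c_6$. Since $f(E(t))\leq f(E(0))\leq H'(r_0^2)$ by the admissibility condition on $\beta$, the term $\alpha_1 E(S)f(E(S))$ is already of size $O(E(S))$, and after choosing $\beta$ so that $C_0/(2\beta)\leq 1/2$ the left-hand factor is bounded below by $1/2$. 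Rearranging and identifying the resulting prefactor with the explicit expression \eqref{M} then gives the claim.

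The main obstacle will be the bookkeeping in the nonlinear zone: the $|\chi_i(\psi_t)|^2$ terms are not controlled directly by the dissipation density $\psi_t\chi_1(\psi_t)$, so one must route them through the convex pair $(\widehat{H},\widehat{H}^{\star})$ in a way compatible with the chosen weight $f(E(t))=L^{-1}(E(t)/(2\beta))$. The nontrivial step is verifying that the same weight $f$ simultaneously drives the absorption of $\int f(E)E\,dt$ coming from three different Young inequality applications (one for $|\psi_t|^2$, one for $|\chi_1(\psi_t)|^2$, and their counterparts for $\varphi_t$), with prefactors summing to something strictly less than $1$ under the condition $\beta\geq E(0)/(2L(H'(r_0^2)))$; this is precisely what forces the explicit form of $\sigma$ given in \eqref{M}.
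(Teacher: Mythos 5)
Your overall architecture coincides with the paper's: start from \eqref{domenrg} with the weight $f(s)=L^{-1}(s/(2\beta))$, split $\Omega$ into a region where the velocities are bounded away from zero (where the linear bounds in $(H_0)$ reduce everything to the dissipation density and give $E(S)f(E(S))$) and a region near zero (where Young's inequality for the pair $(\widehat{H},\widehat{H}^{\star})$ produces $\widehat{H}^{\star}(f(E(t)))=\tfrac{E(t)}{2\beta}f(E(t))$, absorbed into the left-hand side via the definition \eqref{beta} of $\beta$). The paper implements the near-zero region through Jensen's inequality applied to spatial averages such as $|\Omega_0^t|^{-1}\int_{\Omega_0^t}|c_4^{-1}\chi_1(\psi_t)|^2$, whereas you work pointwise; the pointwise route is viable for the $|\psi_t|^2$ terms, where $\widehat{H}(|\psi_t|^2)=|\psi_t|g(|\psi_t|)\leq c_1^{-1}\psi_t\chi_1(\psi_t)$ is a correct consequence of $(H_0)$.

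The genuine gap is in your treatment of the $|\chi_i(\psi_t)|^2$ integrals on the nonlinear zone. You invoke the bound $g^{-1}(|\psi_t|)\,|\chi_i(\psi_t)|\leq C\,\psi_t\chi_i(\psi_t)$, which amounts to $g^{-1}(|\psi_t|)\leq C|\psi_t|$ near the origin; this is false precisely in the cases the method is built for, e.g.\ $g(s)=s^3$ gives $g^{-1}(s)=s^{1/3}\gg s$ as $s\to 0$. If that inequality held, the $|\chi_i|^2$ terms would be controlled directly by the dissipation and no convexity argument would be needed for them at all. The correct manipulation — the one the paper performs after Jensen, and which also works pointwise — is to set $X=c_4^{-1}|\chi_1(\psi_t)|$, use the upper bound in $(H_0)$ to get $X\leq g^{-1}(|\psi_t|)$, hence $g(X)\leq|\psi_t|$ by monotonicity of $g$, and then recognize
\begin{equation*}
H(X^2)=X\,g(X)\leq c_4^{-1}|\chi_1(\psi_t)|\,|\psi_t|,
\end{equation*}
so that Young's inequality $f(E(t))X^2\leq\widehat{H}^{\star}(f(E(t)))+\widehat{H}(X^2)$ routes this term to the dissipation through $\widehat{H}(X^2)=H(X^2)$, valid only when $X^2\leq r_0^2$. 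This last constraint is the second point you gloss over: your nonlinear zone $\{|\psi_t|<r_0\}$ guarantees $|\psi_t|^2\leq r_0^2$ but not $X^2\leq r_0^2$; the threshold must be taken of the form $\varepsilon_1=g(r_0/\sqrt{2})$ (as in the definition of $\Omega_0^t$ in the paper) so that both $B$-arguments fed into $\widehat{H}$ stay inside $[0,r_0^2]$, where $\widehat{H}$ is finite. With these two repairs your argument closes and yields \eqref{ENE} with the constant \eqref{M}.
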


\begin{proof}
 We start by  choosing  $\varepsilon_1$ sufficiently small, namely
$\varepsilon_1=g(\frac{r_0}{\sqrt{2}}).$
Then, for a fixed $t\geq 0$, we define the subset $\Omega_0^t$ as follows:
$$\Omega_0^t=\lbrace (x,y)\in \Omega \;  \; \mbox{such that}\; |\psi_t(t,x,y)| \leq \varepsilon_1 \ \mbox{and}\ |\varphi_t(t,x,y)|\leq \varepsilon_1 \rbrace.$$
Employing \eqref{c3c4} and \eqref{c5c6}, we have
\begin{equation*}
 |\chi_1(\psi_t(t,x,y))|^2\leq ( c_4 g^{-1}(|\psi_t(t,x,y)|)^2 ,  
 \end{equation*}
and
 \begin{equation*}
|\chi_2(\varphi_t(t,x,y))|^2\leq (c_5 g^{-1}(|\varphi_t(t,x,y)|)^2, \quad \forall \ (x,y)\in \Omega_0^t.
\end{equation*}%
 Since $g^{-1}$ is a monotone  increasing function on $\mathbb{R}$, we have
 \begin{equation*}
|\chi_1(\psi_t(t,x,y))|^2 \leq ( c_4 g^{-1}(\epsilon_1))^2 =\frac{c_4^2 r_0^2}{2}, \quad \forall \ (x,y)\in \Omega_0^t,
 \end{equation*}
 and
 \begin{equation*}
 |\chi_2(\varphi_t(t,x,y))|^2 \leq ( c_5 g^{-1}(\epsilon_1))^2 =\frac{c_5^2 r_0^2}{2}, \quad \forall \ (x,y)\in \Omega_0^t.
 \end{equation*}
Hence, we deduce that
 \begin{equation} \label{jensen1}
|\Omega_0^t|^{-1} \int_{\Omega_0^t} |c_4^{-1} \chi_1(\psi_t(t,x,y))|^2 dx dy \in [0,\frac{r_0^2}{2}],
\end{equation}
and
\begin{equation}\label{jensen2}
 |\Omega_0^t|^{-1} \int_{\Omega_0^t} |c_5^{-1}  \chi_2(\varphi_t(t,x,y))|^2 dx dy \in [0,\frac{r_0^2}{2}].
 \end{equation}
On the other hand, $H$ is a strictly convex function on $[0,r_0^2]$, then applying the Jensen's inequality to the quantities in \eqref{jensen1} and \eqref{jensen2}, and using the definition of the function $H(x)=\sqrt{x}g(\sqrt{x})$, we obtain the following estimates:
\begin{equation}
\begin{array}{lcl}\label{p7}
 &&\displaystyle H\left( |\Omega_0^t|^{-1} \int_{\Omega_0^t} |c_4^{-1} \chi_1(\psi_t(t,x,y))|^2  dx dy \right)\\ && \leq
 \displaystyle   |\Omega_0^t|^{-1}  \int_{\Omega_0^t}\frac{1}{c_4} \chi_1(\psi_t(t,x,y)g\left(\frac{1}{c_4} \chi_1(\psi_t(t,x,y))\right) dx dy,
 \end{array}
 \end{equation}
 and
\begin{equation}
\begin{array}{lcl}\label{p8}
&& H\left( |\Omega_0^t|^{-1} \int_{\Omega_0^t} |c_5^{-1}  \chi_2(\varphi_t(t,x,y)|^2  dx dy\right)\\ &&\displaystyle\leq |\Omega_0^t|^{-1}  \int_{\Omega_0^t}\frac{1}{c_5} \chi_2(\varphi_t(t,x,y)g\left(\frac{1}{c_5} \chi_2(\varphi_t(t,x,y))\right) dx dy.
 \end{array}
 \end{equation}
Now, using the fact that $g$ is an increasing function, we have
\begin{equation}\label{eqqq1}
 g\left(\frac{1}{c_4} \chi_1(|\psi_t(t,x,y)|)\right)\leq \psi_t(t,x,y),\hspace{0.2cm} \ \mbox{on}\ \Omega_0^t,
 \end{equation}
 and
 \begin{equation}\label{eqqq2}
g\left(\frac{1}{c_5} \chi_2(|\varphi_t(t,x,y)|\right)\leq
\varphi_t(t,x,y), \hspace{0.2cm} \ \mbox{on}\ \Omega_0^t.
\end{equation}
Knowing that $H(x^2)=xg(x)$ and using the inequalities \eqref{p7}--\eqref{eqqq2}, we infer that
\begin{equation}\label{jj2}
   H\left( |\Omega_0^t|^{-1} \int_{\Omega_0^t} |c_4^{-1} \chi_1(\psi_t(t,x,y))|^2 dx dy \right)\leq |\Omega_0^t|^{-1}\int_{\Omega_0^t} c_4^{-1}\psi_t\chi_1(\psi_t)\; dx  dy,
\end{equation}
and
  \begin{equation}\label{jj1}
 H\left( |\Omega_0^t|^{-1} \int_{\Omega_0^t} |c_5^{-1}  \chi_2(\varphi_t(t,x,y)|^2 dx dy\right)\leq |\Omega_0^t|^{-1}\int_{\Omega_0^t} c_5^{-1}\varphi_t\chi_2(\varphi_t) dx  dy.
 \end{equation}
 Since $\chi_{i}$,  for $i=1,2$, are  increasing  functions, then we obtain
\begin{eqnarray*}
|\Omega^t_0|^{-1} c_4^{-1}\int_{\Omega^t_0} \chi_1(\varphi_t(t,x,y))\varphi_t(t,x,y) dx  dy
&\leq& |\Omega^t_0|^{-1} c_4^{-1}\int_{\Omega^t_0}\chi_1(\varepsilon_1)\varepsilon_1   dx  dy \nonumber \\ &\leq& \varepsilon_1 g^{-1}(\varepsilon_1)= H(\frac{r_0^2}{2}),
\end{eqnarray*}
and
\begin{eqnarray*}
|\Omega^t_0|^{-1} c_5^{-1}\int_{\Omega^t_0} |\chi_2(\psi_t(t,x,y))||\psi_t(t,x,y)|  dx  dy &\leq& |\Omega^t_0|^{-1} c_5^{-1}\int_{\Omega^t_0}\chi_2(\varepsilon_1)\varepsilon_1  dx   dy \nonumber \\ &\leq& \varepsilon_1 g^{-1}(\varepsilon_1)= H(\frac{r_0^2}{2}).
\end{eqnarray*}
Then, we have
\begin{equation*}\label{eqqt1}
|\Omega^t_0|^{-1} c_4^{-1}\int_{\Omega^t_0} |\chi_1(\psi_t(t,x,y))||\psi_t(t,x,y)| dx  dy\in \left[0,H\left(\frac{r_0^2}{2}\right)\right],
\end{equation*}
and
\begin{equation*}\label{eqqt2}
|\Omega^t_0|^{-1} c_5^{-1}\int_{\Omega^t_0} \chi_2(\varphi_t(t,x,y))\varphi_t(t,x,y) dx dy \in\left[0,H\left(\frac{r_0^2}{2}\right)\right].
\end{equation*}
Consequently,  we get
 \begin{equation*}\label{assupH2}
 H^{-1}\left( |\Omega_0^t|^{-1}  \int_{\Omega_0^t} c_4^{-1}  \chi_1(\psi_t(t,x,y))\psi_t(t,x,y) dx dy \right) \in [0,\frac{r_0^2}{2}],
 \end{equation*}
and
\begin{equation*}\label{assupH}
 H^{-1}\left( |\Omega_0^t|^{-1}  \int_{\Omega_0^t} c_5^{-1}  \chi_2(\varphi_t(t,x,y))\varphi_t(t,x,y) dx dy \right) \in [0,\frac{r_0^2}{2}].
 \end{equation*}
Therefore, from the  inequalities \eqref{jj2} and \eqref{jj1}, we deduce that
\begin{equation*}
\begin{array}{lcl}
&&\displaystyle\int_S^T f(E(t)) \left(\int_{\Omega_0^t} |\chi_1(\psi_t)|^2 +\int_{\Omega_0^t} |\chi_2(\varphi_t)|^2\right) \\
&& \displaystyle\leq c_4^2 \int_{S}^T f(E(t)) H^{-1}\left( |\Omega^t_0| c_4^{-1}\int_{\Omega^t_0} \chi_1(\psi_t(t,x,y))\psi_t(t,x,y)\right) \\
&&+ \displaystyle c_5^2 \int_{S}^T f(E(t))  H^{-1}\left(
|\Omega^t_0| c_5^{-1}\int_{\Omega^t_0} \chi_2(\varphi_t(t,x,y))\varphi_t(t,x,y)\right). 
\end{array}
\end{equation*}
Indeed, applying the following Young's inequality
\begin{equation}\label{young}
AB\leq \widehat{H}^{\star}(A)+ \widehat{H}(B),
\end{equation}
 for $A= f(E(t))$ and  for the  two values of $B$ as below; first we consider $$B=B_1(t):= H^{-1}\left( |\Omega^t_0|^{-1} c_4^{-1}\int_{\Omega^t_0} \chi_1(\psi_t(t,x,y))\psi_t(t,x,y)\right),$$
 and second $$B= B_2(t):= H^{-1}\left( |\Omega^t_0|^{-1} c_5^{-1}\int_{\Omega^t_0} \chi_2(\varphi_t(t,x,y))\varphi_t(t,x,y)\right).$$
Hence, we have
\begin{equation}
\begin{array}{lcl}\label{omega1}
&&\displaystyle\int_S^T f(E(t)) \left(\int_{\Omega_0^t} |\chi_1(\psi_t)|^2dx dy +\int_{\Omega_0^t} |\chi_2(\varphi_t)|^2dx dy\right) dt\\
\quad&\displaystyle\leq& \displaystyle c_4^2  |\Omega_0^t| \int_S^T \widehat{H}^{\star}(f(E(t))) dt
 + |\Omega_0^t| \left( |\Omega^t_0|^{-1} c_4\int_{\Omega^t_0} \chi_1(\psi_t(t,x,y))\psi_t(t,x,y)dxdy\right)\\
&+& \displaystyle c_5^2 |\Omega_0^t| \int_S^T \widehat{H}^{\star}(f(E(t))) dt + |\Omega_0^t| \left(
|\Omega^t_0|^{-1} c_5\int_{\Omega^t_0} \chi_2(\varphi_t(t,x,y))\varphi_t(t,x,y)dxdy\right). 
\end{array}
\end{equation}
Taking into account the  dissipation relation  \eqref{dissip}, it is clear that, for all $0\leq S \leq T$,
\begin{equation}\begin{array}{lcl}\label{jj3}
\displaystyle\int_{S}^T \int_{\Omega_0^t} \left(c_4 \psi_t \chi_1(\psi_t) + c_5\varphi_t \chi_2(\varphi_t)\right)dxdydt&\leq & \displaystyle \max(c_4,c_5)\; (E(S)-E(T))\\
&\leq & \max(c_4,c_5)\; E(S).
\end{array}
\end{equation}
Using  \eqref{omega1} and \eqref{jj3}, we deduce that
\begin{equation*}\begin{array}{lcl}
&&\displaystyle\int_S^T f(E(t)) \left(\int_{\Omega_0^t} |\chi_1(\psi_t)|^2dxdy +\int_{\Omega_0^t} |\chi_2(\varphi_t)|^2 dxdy \right) dt \\&& \displaystyle\leq \left( (c_4^2+c_5^2) |\Omega_0^t| \int_S^T  \widehat{H}^{\star}(f(E(t))dt\right)  + \int_S^T \int_{\Omega_0^t} \left(c_4 \psi_t \chi_1(\psi_t)+c_5 \varphi_t\chi_2(\varphi_t)\right)dxdy \, ds \\
& &\displaystyle \leq (c_4^2+c_5^2) |\Omega_0^t| \int_S^T  \widehat{H}^{\star}(f(E(t))dt +
\max(c_4,c_5)  E(S),\
\forall  \  0 \leq S \leq  T.
\end{array}
\end{equation*}
Therefore, for $ 0   \leq S \leq  T$, we have
\begin{equation}\begin{array}{lcl}\label{omega}
&&\displaystyle\int_S^T f(E(t)) \int_{\Omega \setminus \Omega_0^t} \left(|\chi_1(\psi_t)|^2 + |\chi_2(\varphi_t)|^2\right) dxdy\, dt \\ &&\leq \displaystyle  \int_S^T
f(E(t))  \int_{\Omega \setminus \Omega_0^t}  \left(c_4 |\psi_t \chi_1(\psi_t)| + c_5 |\varphi_t\chi_2(\varphi_t)| \right) dxdy\, dt\\
&&\leq \displaystyle \max(c_5,c_4) \; \int_S^T f(E(t)) (-E'(t)) dt\\
 &&\leq \displaystyle \max(c_5,c_4) E(S) f(E(S)).  
\end{array}
\end{equation}
Finally, using the inequalities \eqref{omega1} and \eqref{omega}, we obtain for $0 \leq S \leq  T,$
\begin{equation*}\begin{array}{lcl}
\displaystyle\int_S^T  f(E(t)) \int_{\Omega}( |\chi_1(\psi_t)|^2\!&+&\!|\chi_2(\varphi_t)|^2) dx dy dt \leq  \displaystyle(c_4^2+c_5^2) |\Omega_0^t| \int_S^T  \widehat{H}^{\star}(f(E(t))) dt \\
\!&+&\! \displaystyle\max(c_4,c_5) \; E(S)(f(E(S)))+ \max(c_4,c_5) \; E(S).
\end{array}
\end{equation*}

Now, the second step of the proof starts by choosing   a sufficiently small number 
$$\varepsilon_2=\min(r_0,g(r_1)),$$
where $r_1$ is a positive  constant such that $H(r_1^2)=\max(\frac{c_3}{c_4},\frac{c_5}{c_6}) H(r_0^2)$.

Thanks to $(H_0)$ there exist constants, still denoted here by  $c_3>0$  and $c_4>0$  to avoid too many notations, such that
\begin{equation}\label{h21}
\hspace{0.3cm}\left\{
\begin{array}{l}
c_3|v|\leq |\chi_1(v)|\leq c_4|v|, \hspace{2.5cm} \forall \ |v|\geq \varepsilon_2,\\
c_3|g(|v|)|\leq |\chi_1(v)|\leq c_4 g^{-1}(|v|), \hspace{1cm} \forall \ |v|\leq \varepsilon_2.
\end{array}%
\right.
\end{equation}%
Moreover, there exist constants, still denoted also by  $c_5>0$  and $c_6>0$,  such that
\begin{equation}\label{h211}
\hspace{0.3cm}\left\{
\begin{array}{l}
c_6|v|\leq |\chi_2(v)|\leq c_5|v|, \hspace{2.5cm} \forall \ |v|\geq \varepsilon_2,\\
c_6|g(|v|)|\leq |\chi_2(v)|\leq c_5 g^{-1}(|v|), \hspace{1cm} \forall \ |v|\leq \varepsilon_2.
\end{array}%
\right.
\end{equation}%
At this level, we need to  introduce two sets  $G_0^t $  and $G_1^t$ which are defined as follows:
$$G_0^t=\lbrace (x,y)\in \Omega , |\psi_t(t,x,y)| \leq  \epsilon_2 \rbrace, \; \; \forall \; t\geq 0,$$
and
$$G_1^t=\lbrace (x,y)\in \Omega , |\varphi_t(t,x,y)| \leq  \epsilon_2 \rbrace, \; \; \; \forall \; t\geq 0.$$
Then,  for all $t\geq 0$, we have
\begin{equation*}
|G_0^t|^{-1}\int_{G_0^t} |\psi_t(t,x,y)|^2  dx \ dy \in [0,r_0^2],
\end{equation*}
and
\begin{equation*}
|G_1^t|^{-1}\int_{G_1^t} |\varphi_t(t,x,y)|^2  dx \ dy  \in [0,r_0^2].
\end{equation*}
Using the inequalities   \eqref{h21} and \eqref{h211}, we obtain
$$g(|\psi_t|)\leq c_3^{-1}\chi_1(\psi_t)\ \mbox{on}\ G_1^t, \quad \text{and} \quad g(|\varphi_t|)\leq c_6 ^{-1}\chi_2(\varphi_t)\  \mbox{on}\  G_1^t.$$
Following the same  arguments as in \eqref{p7},  we obtain
\begin{equation*}\begin{array}{lcl}\label{pp7}
\displaystyle H\left( |G_0^t|^{-1} \int_{G_0^t} |\psi_t|^2  dx dy  \right) &\leq & \displaystyle |G_0^t|^{-1}\int_{G_0^t} H(|\psi_t(t,x,y)|^2)\;  dx \;  dy  \\
&\le&  \displaystyle |G_0^t|^{-1} \int_{G_0^t} |\psi_t | g(|\psi_t(t,x,y)|)\; dx \; dy \\ &\leq & \displaystyle \frac{1}{c_6}\int_{G_0^t}\int_{G_0^t}|\varphi_t | |\chi_2(\varphi_t)|  \; dx \; dy,
\end{array}
\end{equation*}
and
\begin{equation*}\begin{array}{lcl}\label{pp8}
\displaystyle H\left( |G_1^t|^{-1} \int_{G_0^t} |\varphi_t|^2 dx dy  \right) &\leq &  |G_1^t|^{-1}\int_{G_1^t} H(|\varphi_t(t,x,y)|^2) dx  dy  \\
&\le & \displaystyle |G_1^t|^{-1} \int_{G_1^t} |\varphi_t | g(|\varphi_t(t,x,y)|) dx  dy\\
&\leq &\displaystyle \frac{1}{c_3}|G_1^t|\int_{G_1^t}|\psi_t||\chi_1(\psi_t)| dx  dy.
\end{array}
\end{equation*}

Hence, since $H$ is an increasing function, we have
$$\begin{array}{lcl}
&&\displaystyle\int_S^T f(E(t))\int_{G_1^t} |\varphi_t |^2 dx dy dt\\ &&\displaystyle \leq \int_S^T f(E(t))
 |G_1^t| H^{-1}\left(  |G_1^t|^{-1}  c_6^{-1}\int_{G_1^t} |\varphi_t| | \chi_2(\varphi_t)| dx dy\right) dt,\end{array}$$
 and
 $$\begin{array}{lcl}
&&\displaystyle\int_S^T f(E(t))\int_{G_0^t} |\psi_t |^2 dx dy dt\\ &&\displaystyle\leq \int_S^T f(E(t))
 |G_0^t| H^{-1}\left(  |G_0^t|^{-1}  c_4^{-1}\int_{G_1^t} |\psi_t| | \chi_1(\psi_t)|dx dy\right) dt.\end{array}$$
 Now, we use the Young's inequality \eqref{young} with $A= f(E(t))$ and $B$ taking successively the two following values:
  $$B=B_3(t):=H^{-1}\left(  |G_0^t|^{-1}  c_3^{-1}\int_{G_1^t} |\psi_t| | \chi_1(\psi_t)|dx dy\right),$$
and $$B=B_4(t):=H^{-1}\left(  |G_1^t|^{-1}  c_6^{-1}\int_{G_1^t} |\varphi_t| | \chi_2(\varphi_t)|dx dy\right).$$
Therefore, we deduce that
 \begin{equation*}\begin{array}{lcl}\label{k11}
&&\displaystyle\int_S^T f(E(t))\int_{G_0^t} |\psi_t |^2 dx dy dt\\ &&\displaystyle \leq  |G_0^t|\int_S^T \bigg[\widehat{H}^{\star}(f(E(t)))+\widehat{H}\left(H^{-1}( |G_0^t|^{-1} \frac{1}{c_3} \int_{G_0^t} \chi_1(\psi_t)\psi_t )\right)\bigg] dt,
 \end{array}\end{equation*}
and
 \begin{equation*}\begin{array}{lcl}\label{k22}
&&\displaystyle \int_S^T f(E(t))\int_{G_1^t} |\varphi_t |^2dx dy dt 
\\&& \leq \displaystyle  |G_1^t|\int_S^T \bigg[ \widehat{H}^{\star}(f(E(t)))+\widehat{H}\left(H^{-1}(|G_1^t|^{-1} c_6^{-1}\int_{G_1^t}\varphi_t \chi_2(\varphi_t))\right)\bigg] dt.
 \end{array} \end{equation*}
 On the other hand, thanks to \eqref{h21} and \eqref{h211}, we have
\begin{equation*}\begin{array}{lcl}
\displaystyle |G_0^t|^{-1}  c_3^{-1}\int_{G_1^t} \psi_t \chi_1(\psi_t)dx dy &&\leq \displaystyle c_4 |G_0^t|^{-1}c_3^{-1} \int_{G_0^t} \psi_t g^{-1}(\psi_t)dx dy  \\ &&\displaystyle\leq \frac{c_4}{c_3} \varepsilon_1 g^{-1}(\varepsilon_1)=\frac{c_4}{c_3}H(r_1^2)\\ &&\displaystyle\leq \max(\frac{c_4}{c_3}, \frac{c_5}{c_6}) H(r_1^2)=H(r_0^2).
 \end{array}\end{equation*}
Similarly, we obtain
\begin{equation*}\begin{array}{lcl}
\displaystyle   |G_1^t|^{-1}  c_6^{-1}\int_{G_1^t} |\varphi_t| | \chi_2(\varphi_t)|dx dy &\leq & \displaystyle c_5 |G_0^t|^{-1}c_6^{-1} \int_{G_0^t} \varphi_t g^{-1}(\varphi_t) dx dy \\ &\leq &\displaystyle \frac{c_5}{c_6} \varepsilon_1 g^{-1}(\varepsilon_1)
 =\frac{c_5}{c_6}H(r_1^2) \\ & \leq & \displaystyle \max(\frac{c_4}{c_3}, \frac{c_5}{c_6} ) H(r_1^2) = H(r_0^2).
\end{array}\end{equation*}
   Using the fact that $H$ is an increasing function, we have
$$
  H^{-1}\left( |G_0^t|^{-1} \frac{1}{c_3} \int_{G_0^t} \chi_1(\psi_t)\psi_t\right) \in [0,r_0^2],$$
and
$$H^{-1}\left(|G_1^t|^{-1} c_6^{-1}\int_{G_1^t}\varphi_t \chi_2(\varphi_t)\right)\in [0,r_0^2].
$$

Remember that   $\widehat{H}(x)=H(x)$, for all $x\in [0,r_0^2]$, and using the dissipation formula \eqref{dissip},  we find that
  \begin{equation*}
 \int_S^T f(E(t))\int_{G_0^t} |\psi_t |^2 dx dy dt \leq   |G_0^t|\int_S^T \widehat{H}^{\star}(f(E(t)))dt+\frac{1}{c_3} E(S), \; \forall \; 0\leq S\leq T,
   \end{equation*}
   and
   \begin{equation*}
\int_S^T f(E(t))\int_{G_1^t} |\varphi_t |^2 dx dy dt  \leq  |G_1^t|\int_S^T \widehat{H}^{\star}(f(E(t)))dt+\frac{1}{c_6} E(S),\; \forall \; 0\leq S\leq T.
     \end{equation*}
  As for \eqref{omega}, we conclude  that
 $$\int_S^T f(E(t)) \int_{\Omega \setminus G_0^t}|\psi_t|^2 dx dy dt\leq c_3^{-1} E(S) f(E(S)),\ \forall \ 0\leq S \leq T, $$
and
 $$\int_S^T f(E(t)) \int_{\Omega \setminus G_1^t}|\varphi_t|^2 dx dy dt\leq c_6^{-1} E(S) f(E(S)), \ \forall \ 0\leq S \leq T.$$
 Consequently, we have
\begin{equation}\begin{array}{lcl}\label{equality2}
 \displaystyle\int_{S}^T f(E(t)) \int_{\Omega} |\psi_t |^2 +|\varphi_t |^2 dx dy dt  &\leq & \displaystyle   \left( \frac{1}{c_3} +\frac{1}{c_6} \right)  E(S) f(E(S))\\
  &+& \displaystyle  (|G_0^t|+ |G_1^t|) \int_S^T \widehat{H}^{\star}(f(E(t)))dt \\
 &+&
  \displaystyle\left(\frac{1}{c_3} + \frac{1}{c_6}\right) E(S), \quad \forall \ 0\leq S \leq T.
  \end{array}
  \end{equation}
Finally, using  the  estimates  \eqref{domenrg}, \eqref{omega1} and \eqref{equality2}, we obtain
\begin{equation}\begin{array}{lcl}\label{h11}
 \displaystyle \int_S^T E(t)f(E(t))\; dt &\leq & \displaystyle \left( \alpha_3 \left( \frac{1}{c_3} +\frac{1}{c_6} \right) +  \alpha_2 (c_4+c_5) \right)  E(S) f(E(S)) \\
  &+&\displaystyle\left(  \alpha_3 (|G_0^t|+ |G_1^t|) +  \alpha_2  (c_4^2+c_5^2) |\Omega_0^t| \right) \int_S^T \widehat{H}^{\star}(f(E(t)))\\
 &+&\displaystyle\alpha_3 \left(\frac{1}{c_3}+ \frac{1}{c_6} \right) E(S),\hspace{0.5cm}
 \forall \ 0\leq S \leq T.
  \end{array}
  \end{equation}
  In order to achieve the proof of this proposition, we need to estimate the term $f(E(S))$. For that purpose, we  define the constant $\beta$, which depends on $E(0)$, as follows:
   \begin{equation*}\label{beta1}
  \beta=\max \left( c_3, \frac{E(0)}{2L(H'(r_0^2))} \right),
   \end{equation*}
  where  $L$ is defined in \eqref{L} and  $c_3=\alpha_3 \left(  |G_0^t|+ |G_1^t|\right) +  \alpha_2 (c_4^2+c_5^2) |\Omega_0^t| $. Here $c_3$ is a positive constant which depends on the physical characteristics of the plate ($\rho_1, \rho_2, b$ and $K$) but it is independent of $E(0)$.
  
Now,  using the definition  of $f$, given by \eqref{ffff1},  the fact that $E$ is a non-decreasing function and the inequality \eqref{bt1}, we conclude that
  $$\frac{E(t)}{2\beta} \leq \frac{E(0)}{2\beta}\leq L(H'(r_0^2))<r_0^2.$$
  Recall that $f$ is an increasing function, we obtain
  $$f(E(S))\leq f(E(0))=L^{-1} \left( \frac{E(0)}{2\beta}\right)\leq H'(r_0^2), \quad \forall \  S\geq 0.$$
  Thus,  the inequality \eqref{h11} reduces to
\begin{equation*}\begin{array}{lcl}
 \displaystyle \int_S^T E(t)f(E(t))\; dt &\leq & \displaystyle \left( \alpha_3 \left( \frac{1}{c_3} +\frac{1}{c_6} \right) +  \alpha_2 (c_4+c_5) \right)  E(S) f(E(S)) \\
  &+& \displaystyle\left(  \alpha_3 (|G_0^t|+ |G_1^t|) +  \alpha_2  (c_4^2+c_5^2) |\Omega_0^t| \right) \int_S^T \widehat{H}^{\star}(f(E(t)))dt\\
 &+& \displaystyle\alpha_3 \left(\frac{1}{c_3}+ \frac{1}{c_6} \right) E(S),\vspace{0.1cm}\\
  &\leq &  \displaystyle\left( \alpha_3\left( \frac{1}{c_3} +\frac{1}{c_6}  \right) (H'(r_0^2)\;  \alpha_2  (c_4+c_5) +1) \right) E(S) \vspace{0.1cm}\\
  &+& \displaystyle C_3 \int_S^T \widehat{H}^{\star}(f(E(t)))dt,  \hspace{0.5cm}
 \forall \ 0\leq S \leq T.
  \end{array}
  \end{equation*}
  Following the same choice for the function $f$ as in \cite{R4}, we have
  $$C_3   \widehat{ H}^{\star}(f(s)) \leq \beta \widehat{ H}^{\star}(f(s))=\frac{sf(s)}{2}, \quad \forall \ s\in [0,\beta r_0^2).$$
  The above estimate holds true for $s=E(S)$, hence, we deduce that
  \begin{equation*}\label{eq123}
  \int_S^T E(t)f(E(t)) dt \leq  \sigma  E(S),  \hspace{0.5cm}
 \forall \ 0\leq S \leq T,
  \end{equation*}
  where
  \begin{equation}\label{sig}
  \sigma = 2 \left(\alpha_3 \left( \frac{1}{c_3} +\frac{1}{c_6}  \right)( H'(r_0^2) \alpha_2 (c_4+c_5)+1)\right).
  \end{equation}
  
  This completes the proof of Proposition \ref{lemmado1}.
\end{proof}

In the sequel, we will put together all the necessary estimates and then prove the results in Theorem \ref{th2} which will be the subject of the next subsection.
\subsection{Proof of Theorem \ref{th2}}
In this subsection, we conclude the proof of Theorem \ref{th2} for which we already prepared all the necessary material. 
   \begin{proof} (of Theorem \ref{th2})\mbox{}\\
    Thanks to the dissipative relation \eqref{dissip}, the energy $E(t)$, defined by \eqref{en} and associated with the solution  of the system \eqref{MTD1}-\eqref{MTD2}, is a decreasing and absolutely continuous function from $[0,\infty)$ on $[0,\infty)$. Moreover, this energy satisfies
     \begin{equation*}
  \int_S^T E(t)f(E(t))dt \leq  \sigma E(S),  \hspace{0.5cm}
 \forall \ 0\leq S \leq T,
  \end{equation*}
  where $\sigma $ is a constant given by \eqref{sig}, and $$f(E(t))=L^{-1}\left(\frac{E(t)}{2\beta}\right).$$
  Hence, by applying the results in \cite[Theorem 2.3]{R4}, we deduce that $E(t)$ satisfies the estimate \eqref{energy-est}. This concludes the proof of Theorem \ref{th2}.
 \end{proof}


\section{Examples}\label{examples}
 Through two illustrative examples, we apply the result of Theorem \ref{th2}, more precisely the inequality \eqref{dcry}, in order to characterize the  behavior of the energy as time goes to infinity and establish an explicit decay rate for the energy. 
\subsection{Example 1}
We focus on the case where $\chi_1 = \chi_2$ and satisfy the assumption $(H_0)$,  and $g(x) = cx^p$. Specifically, we assume that for some constants $c_1$, $c_2$, and $p > 1$, the following inequalities hold:
\begin{equation}
    c_1 \min(|x|,|x|^p) \leq |\chi_i(x)| \leq c_2 \max(|x|,|x|^{\frac{1}{p}}).
\end{equation}
The function $H(x)$ takes the form $H(x) = cx^{\frac{p+1}{2}}$,  leading to 
$ \limsup_{x \to 0^{+}}\Lambda_{H}(x)= \frac{2}{p+1} <1.$ Consequently, formula \eqref{dcry} provides us with the following explicit decay rate for the energy estimate \eqref{en},
\begin{equation}
    E(t)\leq \beta \left(\frac{\sigma}{t}\right)^{\frac{2}{p+1}},
\end{equation}
where the constants $\sigma$ and $\beta$ are  given, respectively, by \eqref{beta} and \eqref{M}.
\subsection{Example 2}
For $g(x)= \exp \left({\frac{-1}{x^2}}\right)$, we have $H(x)= \sqrt{x}\exp\left(\frac{-1}{x^2}\right)$. This yields $$\Lambda_{H}(x)= \frac{2x}{x+2}.$$
For this case $\limsup_{x \to 0^+} \Lambda_{H}(x)=0 <1$. Hence,  formula \eqref{energy-est} provides us with the following explicit decay rate for the energy
\begin{equation}
E(t)\leq \beta \left(\ln\left(\frac{\sigma}{t}\right)\right)^{-1},
\end{equation}
where the constants $\sigma$ and $\beta$ are  given, respectively, by \eqref{beta} and \eqref{M}.

\end{document}